\newtheorem{remark}{Remark}
\numberwithin{figure}{section}
\newtheorem{theorem}{Theorem}[section]
\newtheorem{lm}{Lemma}[section]
\newtheorem{assumption}{Assumption}[section]
\newtheorem{defn}{Definition}[section]
\newtheorem{Al}{Algorithm}[section]
\newcounter{nextauthor}
\def\mathrm{\mbox}
\numberwithin{equation}{section}
\begin{document}
\title{{\Large\bf{Stochastic differential variational inequalities with applications}}\thanks{This work was supported by the National Natural Science Foundation of China (11671282, 12171339).}}
\author{Yao-Jia Zhang, Tao Chen, Nan-jing Huang\thanks{Corresponding author. E-mail address: nanjinghuang@hotmail.com; njhuang@scu.edu.cn}
and Xue-song Li
 \\{\small Department of Mathematics, Sichuan University, Chengdu, Sichuan, P.R. China}}
\date{ }
\maketitle

\noindent{\bf Abstract}: In this paper, we introduce and study a stochastic differential variational inequality (SDVI) which consists of a stochastic differential equation and a stochastic variational inequality. We obtain the existence and uniqueness of the solutions for SDVI by using the iteration method and Gronwall's inequality. Moreover, we show the convergence of Euler scheme for solving SDVI under some mild conditions. Finally, we apply the obtained results to solve the electrical circuits with diodes and the collapse of the bridge problems in stochastic environment.
\\ \ \\
{\bf Keywords}: Stochastic differential variational inequality; Stochastic differential equation; Stochastic variational inequality; Euler scheme; Convergence.
 \\ \ \\
\textbf{2020 AMS Subject Classification:} {34K50, 37A50, 49J40, 90C33, 91A15.}

\maketitle

\section{Introduction}
Pang and Stewart first systematically investigated the differential variational inequality (DVI) in \cite{Pang2008Differential}. They obtained solution
existence results and studied the numerical analysis of DVI by using a Euler time-stepping method. They further showed the sensitivity theorems and Newton approximation methods for DVI in \cite{Pang2009Solution} under some mild conditions.
Since then, various theoretical results, algorithms and applications concerned with DVIs have been studied by many authors under different conditions (see, for example, \cite{Han2009Convergence,Chen2012Computational,Wu2020A,Chen2013Convergence,Wang2018Existence,Wang2016A,Li2010Differential,Weng2021Rothe,Li2010Differential, Pang2008Differential,Zeng2018A,Weng2021Rothe,Brogliato2020Dynamical} and the references therein).

 As pointed out by Pang and Stewart \cite{Pang2008Differential}, DVI provides a powerful mathematical paradigm for modeling many real problems. By taking into account some stochastic environmental effects in DVIs, this paper investigates the following SDVI:
\begin{equation}\label{S1}
\left\{
\begin{aligned}
&dx(t)=f(t,x(t),u(t))dt+g(t,x(t),u(t))dB_t, \, t\in [0,T], \, \Gamma(x(0),x(T))=0,\\
&\left\langle F(t,\omega,x(t,\omega),u(t,\omega)),v-u(t,\omega) \right\rangle \geq 0, \, \forall v\in K, \, a.e. \, t\in [0,T], \, a.s. \, \omega\in \Omega,
\end{aligned}
\right.
\end{equation}
where $B_t$ is an $l$-dimensional standard Brownian motion, $K$ is a closed convex subset in $\mathbb{R}^m$, $f:[0,T]\times \mathbb{R}^n \times \mathbb{R}^m \rightarrow \mathbb{R}^n$, $g:[0,T]\times \mathbb{R}^n \times \mathbb{R}^m \rightarrow \mathbb{R}^{n\times l}$, $F: [0,T]\times \Omega \times \mathbb{R}^n \times K \rightarrow \mathbb{R}^m$, and $\Gamma: \mathbb{R}^n \times \mathbb{R}^n \rightarrow \mathbb{R}^n$ are measurable functions. As an extension of DVI under random environmental effects, SDVI \eqref{S1} provides a powerful mathematical tool for describing many real problems arising in engineering sciences, economics, biology such as stochastic electrical circuits, stochastic earthquake engineering, stochastic spatial price equilibrium problems, stochastic differential Nash games and stochastic batch fermentation problems (see, for example, \cite{Bensoussan2007Applied,Bensoussan2006Stochastic,Chen2012Computational,Chen2013Convergence,Pang2008Differential,Weng2021Rothe,
Zeng2018A,Yi2019A,Chen2021A,Raghunathan2006Parameter,Li2015Differential}).

 It is worth noting that there have been numerous studies concerning the special cases of SDVI \eqref{S1} under different conditions; for example, stochastic differential algebraic equations \cite{Nguyen2012Lyapunov,Milano2013A,Avaji2019Stability}, stochastic differential complementarity problems \cite{Random2018Tran}, stochastic differential inclusions  \cite{Yang2013Dynkin,Gassous2012Stochastic,Buckdahn2015Stochastic} and stochastic variational inequalities \cite{Bensoussan2007Applied,Bensoussan2006Stochastic}. Nevertheless, to the best of authors' knowledge, there is few work addressing the existence and uniqueness of the solutions for SDVIs. Thus, it would be significant and interesting to obtain some conditions for ensuring the existence and uniqueness of the solutions for SDVI \eqref{S1}. The first purpose of this paper is to show the existence and uniqueness of the solutions for SDVI \eqref{S1} under some mild conditions.

 On the other hand, fruitful results on numerical method of stochastic differential equations (SDEs) have been obtained by many authors including \cite{Tocino2002Runge,Platen1999An,Khodabin2011Numerical,Burrage2007Numerical,Peter1995Numberical} in the earlier years and \cite{Liu2022A,Cao2015Numerical,Li2021Numerical,Chirima2020Numerical} more recently. However, there is few work concerned with the numerical approximation method for solving SDVI \eqref{S1}. Therefore, it would be significant and attractive to investigate numerical approximation method for solving SDVI \eqref{S1}. The second goal of this paper is to propose and investigate a kind of Euler scheme for solving SDVI \eqref{S1} with $\Gamma(x(0),x(T))=x(0)-x_0$. Motivated and spirited by Euler method for solving DVIs \cite{Pang2008Differential,Chen2012Computational,Chen2013Convergence,Han2009Convergence,Li2010Differential,Wang2016A,Wang2018Existence}  and SDEs \cite{Platen1999An,Yang2020Strong,Peter1995Numberical}, we divide time interval $[0,T]$ into $N$ subintervals:
\begin{align*}
	0=t_0<t_1<t_2<\ldots <t_N=T,
\end{align*}
where $t_{i+1}-t_{i}=h=\frac{T}{N}$ $(i=0,1,\cdots,N-1)$, and  construct Euler scheme for solving SDVI \eqref{S1} as follows:
\begin{equation}\label{S1-e}
\left\{
\begin{aligned}
&x_h(t_{i+1})=x_h(t_{i})+f(t_{i},x_h(t_{i}),u_h(t_{i}))(t_{i+1}-t_{i})\\
&\qquad \quad  \mbox{} +g(t_{i},x_h(t_{i}),u_h(t_{i}))(B_{t_{i+1}}-B_{t_{i}}),\\
&x_h(t)=x_h(t_{i})+f(t_{i},x_h(t_{i}),u_h(t_{i}))(t-t_{i-1})\\
&\qquad \quad  \mbox{} +g(t_{i},x_h(t_{i}),u_h(t_{i}))(B_{t}-B_{t_{i}}),\, \forall t\in [t_{i},t_{i+1}),\, x_h(0)=x_0,\\
&\left\langle F(t_{i},\omega,x_h(t_{i}),u_h(t_{i})),v-u_h(t_{i}) \right\rangle \geq 0, \; \forall v\in K, \; a.s. \; \omega\in \Omega,\\
&u_h(t)=u_h(t_{i}), \quad \forall t\in [t_{i},t_{i+1}).
\end{aligned}
\right.
\end{equation}
In fact, we can compute $u_h(0)$ from the stochastic variational inequality (SVI) in \eqref{S1-e} for a given $x_0$, and then obtain $x_h(t_1)$ from SDE in \eqref{S1-e} by using $x_0$ and $u_h(0)$. In the same way, we can compute $u_h(t_1)$ and $x_h(t_2)$ by using $x_h(t_1)$. Again, repeat the above process.

The investigation of SDVIs is still rarely explored and much is desired to be done. The current work is an attempt in this new direction.  The contributions of this paper are twofold.
One is to obtain the existence and uniqueness of the solutions for SDVI \eqref{S1} by employing the iteration method and Gronwall's inequality. The other is to prove the convergence of Euler scheme  \eqref{S1-e} for solving SDVI \eqref{S1} under some mild conditions.

The outline of this paper is structured as follows. The next section recalls some necessary notions, symbols and lemmas. After that in Section 3, we prove the unique existence of the solutions for SDVI \eqref{S1}. In Section 4, we show the convergence of Euler scheme constructed by \eqref{S1-e}. Finally, we give two applications to the electrical circuits with diodes and the collapse of the bridge problems in stochastic environment in Section 5.

\section{Preliminaries}
\noindent \setcounter{equation}{0}
This section recalls some necessary notions, symbols and lemmas.
\begin{defn}${}$ \cite{Kuo1990Introduction,Bauschke2011Convex,Facchinei2003Finite}
\begin{itemize}
\item Denote the norm and inner product of $\mathbb{R}^n$ (or $\mathbb{R}^m$) by $\|\cdot\|$ and $\left\langle \cdot ,\cdot \right\rangle$.
\item $B_t$ is an $l$-dimensional standard Brownian motion with probability distribution $N(0,t)$.
\item $\mathcal{F}_t=\sigma\left\{B_t: 0\le s \le t\right\}$ is a filtration generated by $B_t$.

\item $L^2(\Omega ,\mathbb{R}^n)=L^2(\Omega,\mathcal{F},\mathbb{P},\mathbb{R}^n)$, the set which contains all the $\mathbb{R}^n$-valued
 measurable square-integrable random variables, is a Hilbert space fitted out the norm $\|\cdot\|_{L^2}=(E|\cdot|^2)^{\frac{1}{2}}$.

\item $H_{[a,b]}=L^2_{ad}([a,b]\times \Omega,\mathbb{R}^n)$  is the space of all stochastic processes $f(t,\omega)$, $0 \leq a\leq b\leq T$ valued in $\mathbb{R}^n$ such that $f(t,\omega)$ is adapted to the filtration $\left\{\mathcal{F}_t\right\}$ and $ \int_{a}^{b} \mathbb{E}\|f(t,\omega)\|^2dt < \infty $. Moreover, $L^2_{ad}([a,b]\times \Omega,\mathbb{R}^n)$ is a Hilbert space with the inner product
$$
\left\langle u,v \right\rangle_{H_{[a,b]}} = \int_{a}^{b} \mathbb{E}[\langle u(t,\omega),v(t,\omega)\rangle]dt,  \quad \forall u , v \in  L^2_{ad}([a,b]\times \Omega,\mathbb{R}^n)
$$
and norm
$$
\|u\|_{H_{[a,b]}}=\left[\int_{a}^{b} \mathbb{E}\|u(t,\omega)\|^2dt\right]^{\frac{1}{2}},  \quad \forall u  \in  L^2_{ad}([a,b]\times \Omega,\mathbb{R}^n).
$$
\end{itemize}
\end{defn}

Let $K\subset \mathbb{R}^m$ be nonempty. For any given given interval $[a,b]\subset [0,T]$, set
$$U[a,b]=\left\{u\in L^2_{ad}([a,b]\times \Omega,\mathbb{R}^m): u(s,\omega)\in K,a.e. \; s\in [a,b], \; a.s. \; \omega\in \Omega\right\}$$
and
$$
U=\left\{u\in L^2(\Omega ,\mathbb{R}^n):u(\omega)\in K, a.s. \; \omega\in \Omega \right\}.
$$

\begin{lm}\label{ccs}
Let $K\subset \mathbb{R}^m$ be nonempty, closed and convex. Then, the following statements hold:
\begin{itemize}
\item[(i)]  for any given interval $[a,b]\subset [0,T]$, $U[a,b]$ is nonempty, closed and convex in $L^2_{ad}([a,b]\times \Omega,\mathbb{R}^m)$;
\item[(ii)] $U$ is nonempty, closed and convex in $L^2(\Omega ,\mathbb{R}^m)$.
\end{itemize}

\begin{proof}
	For any $u(s,\omega)\in U[a,b]$, we have $u(s,\omega)\in L^2_{ad}([a,b]\times \Omega,\mathbb{R}^m)$ and so it is $\mathcal{F}_s$-measurable for all $s\in [a,b]$. It follows from the fact $K\not=\emptyset$ that $U[a,b]\not=\emptyset$. Now we show that $U[a,b]$ is convex in $L^2_{ad}([a,b]\times \Omega,\mathbb{R}^m)$. Indeed, for any $u_1(s,\omega),u_2(s,\omega) \in U[a,b]$ and $0<\lambda<1$, one has $u_1(s,\omega), u_2(s,\omega)\in L^2_{ad}([a,b]\times \Omega,\mathbb{R}^m)$ and $u_1(s,\omega),u_2(s,\omega) \in K$ for a.e. $t\in [a,b]$, a.s. $\omega\in \Omega$. Since $K$ is nonempty closed convex, it is easy to check that
	$$\lambda u_1(s,\omega)+(1-\lambda)u_2(s,\omega) \in K, \; a.e. \; t\in [a,b], \; a.s. \; \omega\in \Omega$$
	and so $U[0,t]$ is convex.
	
	Next we prove that $U[a,b]$ is closed in $L^2_{ad}([a,b]\times \Omega,\mathbb{R}^m)$. In fact, let $\left\{u_n(s,\omega)\right\} \subset U[a,b]$ be a sequence such that $\|u_n(s,\omega)-u^*(s,\omega)\|_{H[a,b]} \to 0$. Then it is easy to see that $u^*(t,\omega)\in L^2_{ad}([a,b]\times \Omega,\mathbb{R}^m)$. Moreover, for any $s \in [a,b]$,
	$$\|u_n(s,\omega)-u^*(s,\omega)\|^2_{H[a,b]}= \int_a^b\mathbb{E}\|u_n(s,\omega)-u^*(s,\omega)\|^2ds \to 0,$$
	which implies that
	$$\|u_n(s,\omega)- u^*(s,\omega)\|\to 0, \; a.e. \; s\in [a,b], \; a.s. \; \omega\in \Omega.$$
	Now, the closedness of $K$ shows that $u^*(s,\omega) \in U[a,b]$ and so $U[a,b]$ is closed. Thus the statement $(i)$ holds. Similarily, we can prove $(ii)$.
\end{proof}

\end{lm}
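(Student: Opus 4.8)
The plan is to verify the three properties---nonemptiness, convexity, and closedness---for $U[a,b]$ in part (i), and then to observe that part (ii) follows by the identical argument with the time variable suppressed.

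For nonemptiness, since $K\neq\emptyset$ I would fix any $k_0\in K$ and consider the constant process $u(s,\omega)\equiv k_0$. This process is trivially adapted to $\{\mathcal{F}_s\}$ and, because $[a,b]$ is bounded, square-integrable, so it lies in $L^2_{ad}([a,b]\times\Omega,\mathbb{R}^m)$; as its values lie in $K$, it belongs to $U[a,b]$. For convexity, given $u_1,u_2\in U[a,b]$ and $\lambda\in(0,1)$, the combination $\lambda u_1+(1-\lambda)u_2$ again lies in the linear space $L^2_{ad}([a,b]\times\Omega,\mathbb{R}^m)$, and for a.e. $s$ and a.s. $\omega$ it is a convex combination of the two points $u_1(s,\omega),u_2(s,\omega)\in K$; convexity of $K$ then places it in $K$, so the combination belongs to $U[a,b]$.

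The main obstacle is closedness, and it is here that care is needed. Suppose $u_n\in U[a,b]$ with $\|u_n-u^*\|_{H_{[a,b]}}\to 0$. Since $L^2_{ad}([a,b]\times\Omega,\mathbb{R}^m)$ is a Hilbert space, the limit $u^*$ already belongs to it, so the only point to check is that $u^*(s,\omega)\in K$ for a.e. $s$ and a.s. $\omega$. The subtlety is that $L^2$ convergence on the product space $[a,b]\times\Omega$ under $ds\otimes d\mathbb{P}$ does not by itself force pointwise convergence; however, it does imply convergence in measure, so there is a subsequence $u_{n_k}$ converging to $u^*$ for $ds\otimes d\mathbb{P}$-almost every $(s,\omega)$. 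For each such $(s,\omega)$ we have $u_{n_k}(s,\omega)\in K$ for all $k$, and the closedness of $K$ yields $u^*(s,\omega)\in K$. Hence $u^*\in U[a,b]$ and $U[a,b]$ is closed.

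Part (ii) is then obtained by repeating the three arguments on $L^2(\Omega,\mathbb{R}^m)$: the constant random variable $\omega\mapsto k_0$ gives nonemptiness, the linearity of $L^2(\Omega,\mathbb{R}^m)$ together with convexity of $K$ gives convexity, and the closedness step uses the standard fact that every $L^2(\Omega,\mathbb{R}^m)$-convergent sequence admits an a.s.-convergent subsequence, after which closedness of $K$ again passes to the limit.
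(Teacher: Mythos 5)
Your proposal is correct, and it follows the same overall three-step structure as the paper (nonemptiness via a constant selection from $K$, convexity from pointwise convexity of $K$, closedness by passing pointwise limits into $K$). The one place where you genuinely diverge is the closedness step, and your version is the sounder one: the paper asserts that $\|u_n-u^*\|_{H_{[a,b]}}\to 0$ directly implies $\|u_n(s,\omega)-u^*(s,\omega)\|\to 0$ for a.e.\ $s$ and a.s.\ $\omega$, which is false as stated for the full sequence (the standard ``moving bump'' example gives $L^2$ convergence with pointwise convergence failing everywhere). Your fix --- $L^2$ convergence on $[a,b]\times\Omega$ gives convergence in $ds\otimes d\mathbb{P}$-measure, hence an a.e.-convergent subsequence $u_{n_k}$, and closedness of $K$ applied along that subsequence already forces $u^*(s,\omega)\in K$ a.e.\ --- is exactly the repair needed, since membership of the limit in $K$ is a property of $u^*$ alone and does not require the whole sequence to converge pointwise. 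The same remark applies to your treatment of part (ii). So your argument buys rigor at the paper's weakest point at essentially no extra cost; the only cosmetic caveat is that one should note the null sets for the countably many $u_{n_k}$ can be discarded simultaneously, which your phrasing implicitly does.
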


\begin{lm}\cite{Brezis2011Functional}
Let  $H$ be a  Hilbert space and $K\subset H$ be nonempty, closed and convex. Then, for each $m\in H$, there exists a unique $u\in K$, named as the projection of $m$ onto $K$ and denoted by $u=P_K(m)$, such that
\begin{align*}
\|m-u\|_H=\min_{v\in K}\|m-v\|_H=dist(m,K).
\end{align*}
Moreover, $u=P_K(m)$ if and only if
$$ u\in K, \quad \left\langle m-u,v-u \right\rangle_H \leq 0,\quad \forall v\in K.$$
\end{lm}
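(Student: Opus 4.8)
The plan is to prove this classical projection theorem in three stages: existence of a minimizer, its uniqueness, and the equivalence with the stated variational inequality. Throughout I would exploit the parallelogram identity $\|a+b\|_H^2+\|a-b\|_H^2=2\|a\|_H^2+2\|b\|_H^2$, which is the one tool specific to Hilbert (as opposed to general normed) spaces and which makes every step go through.

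For existence, I would set $d=\inf_{v\in K}\|m-v\|_H=\mathrm{dist}(m,K)$, which is finite and nonnegative since $K\neq\emptyset$, and choose a minimizing sequence $(v_n)\subset K$ with $\|m-v_n\|_H\to d$. Applying the parallelogram identity to $a=m-v_n$ and $b=m-v_p$ gives
$$\|v_n-v_p\|_H^2=2\|m-v_n\|_H^2+2\|m-v_p\|_H^2-4\left\|m-\tfrac{v_n+v_p}{2}\right\|_H^2.$$
Convexity of $K$ forces $\tfrac{v_n+v_p}{2}\in K$, so the last term is bounded below by $d^2$; letting $n,p\to\infty$ the right-hand side tends to $2d^2+2d^2-4d^2=0$, whence $(v_n)$ is Cauchy. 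Completeness of $H$ yields a limit $u$, and closedness of $K$ gives $u\in K$, while continuity of the norm gives $\|m-u\|_H=d$. This Cauchy argument is the technical heart of the proof and the step I expect to demand the most care, since it is exactly where completeness and convexity must be combined.

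For uniqueness I would apply the same identity to two putative minimizers $u_1,u_2$: the computation above with $v_n,v_p$ replaced by $u_1,u_2$ yields $\|u_1-u_2\|_H^2\le 2d^2+2d^2-4d^2=0$, so $u_1=u_2$.

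Finally, for the variational characterization I would argue both directions. Assuming $u=P_K(m)$, fix $v\in K$ and use convexity to note $u+t(v-u)\in K$ for $t\in[0,1]$; minimality gives $\|m-u\|_H^2\le\|m-u-t(v-u)\|_H^2=\|m-u\|_H^2-2t\langle m-u,v-u\rangle_H+t^2\|v-u\|_H^2$, so $2\langle m-u,v-u\rangle_H\le t\|v-u\|_H^2$, and letting $t\to 0^+$ produces $\langle m-u,v-u\rangle_H\le 0$. Conversely, if this inequality holds for every $v\in K$, then for any $v\in K$ the expansion $\|m-v\|_H^2=\|m-u\|_H^2-2\langle m-u,v-u\rangle_H+\|v-u\|_H^2\ge\|m-u\|_H^2$ shows $u$ realizes the minimum, which identifies it as $P_K(m)$ by the already-established uniqueness. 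This closes the equivalence and completes the proof.
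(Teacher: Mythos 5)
Your proof is correct and complete: the minimizing-sequence argument via the parallelogram identity, the uniqueness argument, and both directions of the variational characterization are all carried out properly. Note that the paper itself offers no proof of this lemma --- it is quoted as a known result with a citation to Brezis --- and your argument is precisely the classical one given in that reference, so there is nothing to reconcile between the two.
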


\begin{lm}\label{lm-p}\cite{Migorski2013Nolinear}
Let $H$ be a Hilbert space, $K\subset H$ be nonempty, closed  and convex, and $T: H \rightarrow H $ be a mapping. Then $x \in K$ is a solution of the following variational inequality (VI):
$$\left\langle T(x),y-x \right\rangle_H \ge 0, \quad \forall y\in K$$
if and only if $x=P_K(I-\alpha T)(x)$, where $\alpha$ is a positive constant and $I$ is an identity operator.
\end{lm}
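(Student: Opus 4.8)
The plan is to derive the claim directly from the projection characterization established in the preceding lemma, so that no fixed-point theorem and no separate treatment of the two directions is needed; both implications will fall out of a single chain of equivalences.

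First I would specialize that projection lemma to the point $m = (I-\alpha T)(x) = x - \alpha T(x) \in H$. According to the lemma, the identity $x = P_K(m)$ holds if and only if $x \in K$ and
$$\langle m - x, v - x \rangle_H \le 0, \quad \forall v \in K.$$
Thus the whole statement reduces to showing that this projection inequality is equivalent to the variational inequality $\langle T(x), y - x \rangle_H \ge 0$ for all $y \in K$.

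Next I would compute $m - x = (x - \alpha T(x)) - x = -\alpha T(x)$ and substitute, so that the projection inequality becomes
$$\langle -\alpha T(x), v - x \rangle_H \le 0, \quad \forall v \in K,$$
that is, $-\alpha \langle T(x), v - x \rangle_H \le 0$ for all $v \in K$. Because $\alpha$ is a fixed positive constant, dividing by $-\alpha$ reverses the inequality and yields exactly $\langle T(x), v - x \rangle_H \ge 0$ for all $v \in K$, which (after renaming $v$ as $y$) is the desired VI. Since each step is a reversible equivalence holding for every $v \in K$, this argument establishes both directions at once.

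I do not anticipate a genuine obstacle here: the argument is essentially a one-line algebraic manipulation once the projection characterization is in place. The only point worth stating carefully is that the strict positivity $\alpha > 0$ is precisely what converts the ``$\le 0$'' appearing in the projection lemma into the ``$\ge 0$'' of the variational inequality; the membership $x \in K$ is carried along identically in both characterizations, so no further verification is required.
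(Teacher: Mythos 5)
Your proof is correct and complete. The paper itself gives no proof of this lemma---it is quoted directly from \cite{Migorski2013Nolinear}---so there is no internal argument to compare against; your derivation is precisely the standard one: specializing the projection characterization of the preceding lemma to $m = x-\alpha T(x)$, noting $m - x = -\alpha T(x)$, and using $\alpha>0$ to flip the inequality yields both implications as a single chain of equivalences, with the membership $x\in K$ carried along unchanged.
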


\begin{lm}\label{gGi}\cite{Brunner2004Collocation,Yang2020Strong}
If there exist a non-negative sequence $\left\{z_n\right\}_{n=1}^{N}$ and a non-negative and non-decreasing sequence $\left\{y_n\right\}_{n=1}^{N}$ such that
\begin{align*}
z_n\leq \frac{Q_A}{N^{\beta}}\sum_{i=1}^{n-1}\frac{z_i}{(n-i)^{1-\beta}}+y_n,\quad 1\leq n\leq N, 0<\beta<1, Q_A>0,
\end{align*}
then the sequence $\left\{z_n\right\}_{n=1}^{N}$ can be bounded from above by
\begin{align*}
z_n \leq y_n(1+\mathbb{G}_{\beta}[Q_A\Gamma(\beta)], 1\leq n\leq N,
\end{align*}
where $\Gamma(x)=2\int_0^{+\infty}t^{2x-1}e^{-t^2}dt$ is the $\Gamma$-function and $\mathbb{G}_{\beta}(x)=\sum\limits_{k=0}^{\infty}\frac{x^k}{\Gamma(\beta k+1)}$ with $x\in \mathbb{R}$.
\end{lm}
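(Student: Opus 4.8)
The plan is to recast the hypothesis as a fixed-point-type inequality for a positive linear operator and then iterate; the whole argument is the discrete analogue of the weakly singular (Henry--Gronwall) inequality whose sharp bound is a Mittag--Leffler function, and indeed $\mathbb{G}_\beta$ is exactly $E_\beta$. I would introduce the operator $T$ acting on sequences $(w_i)_{i=1}^{N}$ by
\[
(Tw)_n = \frac{Q_A}{N^{\beta}}\sum_{i=1}^{n-1}\frac{w_i}{(n-i)^{1-\beta}},\qquad (Tw)_1=0,
\]
so that the hypothesis reads $z\le Tz+y$ componentwise. Since $Q_A>0$ and every weight $(n-i)^{\beta-1}$ is positive, $T$ is positivity preserving and monotone ($0\le w\le w'$ implies $0\le Tw\le Tw'$). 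Applying $T$ repeatedly to $z\le Tz+y$ gives, for every $m\ge 1$,
\[
z_n \le (T^m z)_n + \sum_{j=0}^{m-1}(T^j y)_n ,
\]
so the proof reduces to controlling each iterate $T^j y$ and showing the remainder $(T^m z)_n$ vanishes as $m\to\infty$.

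The heart of the argument is the estimate, to be proved by induction on $j$,
\[
(T^j y)_n \le y_n\,\frac{[Q_A\Gamma(\beta)]^{j}}{\Gamma(\beta j+1)}\,(n/N)^{j\beta},\qquad 1\le n\le N .
\]
The base case $j=0$ is the identity $y_n=y_n$. For the inductive step I would use $y_i\le y_n$ for $i\le n$ (monotonicity of $\{y_n\}$) to pull $y_n$ outside the sum, after which everything rests on the discrete convolution $\sum_{i=1}^{n-1} i^{j\beta}(n-i)^{\beta-1}$. The key observation---and the step I expect to be the main obstacle---is that the integrand $g(s)=s^{j\beta}(n-s)^{\beta-1}$ is increasing on $(0,n)$, since its logarithmic derivative $\tfrac{j\beta}{s}+\tfrac{1-\beta}{n-s}$ is positive there; hence $g(i)\le\int_i^{i+1}g(s)\,ds$ and
\[
\sum_{i=1}^{n-1} i^{j\beta}(n-i)^{\beta-1} \le \int_0^{n} s^{j\beta}(n-s)^{\beta-1}\,ds = n^{(j+1)\beta}\,\frac{\Gamma(\beta)\,\Gamma(\beta j+1)}{\Gamma(\beta(j+1)+1)},
\]
the last equality being the Beta-function evaluation together with the substitution $t^2\mapsto s$ that identifies the stated $\Gamma$ with the usual Gamma function. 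Substituting this back and simplifying collapses the constant to $\tfrac{[Q_A\Gamma(\beta)]^{j+1}}{\Gamma(\beta(j+1)+1)}(n/N)^{(j+1)\beta}$, closing the induction. The delicate point is recognising that the crude integral comparison is sharp enough to reproduce the exact Beta constant; this succeeds precisely because the summand is monotone, which is what makes the discrete bound mirror the continuous one.

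Finally I would assemble the pieces. Because $n\le N$ gives $(n/N)^{j\beta}\le 1$, the iterate estimate yields
\[
\sum_{j=0}^{\infty}(T^j y)_n \le y_n\sum_{j=0}^{\infty}\frac{[Q_A\Gamma(\beta)]^{j}}{\Gamma(\beta j+1)} = y_n\,\mathbb{G}_\beta\!\left(Q_A\Gamma(\beta)\right),
\]
the series converging since $\Gamma(\beta j+1)$ grows faster than any geometric sequence (so $\mathbb{G}_\beta$ is entire). For the remainder, the finite sequence $\{z_i\}$ is bounded by some constant $M$, and monotonicity of $T$ with the same kernel estimate applied to the constant (hence nondecreasing) sequence give $(T^m z)_n\le M\,\tfrac{[Q_A\Gamma(\beta)]^{m}}{\Gamma(\beta m+1)}\to 0$. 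Letting $m\to\infty$ in the iterated inequality produces $z_n\le y_n\,\mathbb{G}_\beta(Q_A\Gamma(\beta))$, which in particular yields the asserted bound $z_n\le y_n\bigl(1+\mathbb{G}_\beta[Q_A\Gamma(\beta)]\bigr)$ for $1\le n\le N$.
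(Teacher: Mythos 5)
Your proof is correct, but note that the paper itself does not prove this lemma at all --- it is quoted verbatim from the cited references (Brunner's monograph on Volterra equations and Yang et al.), so there is no in-paper argument to compare against; you have supplied a complete proof where the paper has none. Your argument is the standard Dixon--McKee/Henry-type iteration that underlies those references: the operator iteration $z\le T^m z+\sum_{j=0}^{m-1}T^jy$ is legitimate because $T$ is linear with non-negative kernel, and the inductive kernel estimate closes correctly --- the logarithmic derivative $\frac{j\beta}{s}+\frac{1-\beta}{n-s}>0$ does make $s^{j\beta}(n-s)^{\beta-1}$ increasing on $(0,n)$, so $\sum_{i=1}^{n-1}g(i)\le\int_0^n g(s)\,ds$ is valid, and the Beta-function evaluation $\int_0^n s^{j\beta}(n-s)^{\beta-1}ds=n^{(j+1)\beta}\Gamma(\beta)\Gamma(\beta j+1)/\Gamma(\beta(j+1)+1)$ is exact (the paper's unusual formula $\Gamma(x)=2\int_0^{\infty}t^{2x-1}e^{-t^2}dt$ is indeed the ordinary Gamma function under $u=t^2$, as you observe). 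The remainder term is handled properly: the finite sequence $z$ is dominated by a constant sequence, which is non-decreasing, so your iterate bound applies and forces $(T^mz)_n\to 0$. Two small remarks: your conclusion $z_n\le y_n\,\mathbb{G}_{\beta}(Q_A\Gamma(\beta))$ is in fact \emph{sharper} than the stated bound $y_n\bigl(1+\mathbb{G}_{\beta}[Q_A\Gamma(\beta)]\bigr)$, since $\mathbb{G}_{\beta}\ge 1$ on $[0,\infty)$; and the monotonicity hypothesis on $\{y_n\}$ is used exactly once, to pull $y_i\le y_n$ out of the convolution, which is where the lemma would fail without it.
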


\begin{lm}\label{lm-sg}\cite{Kuo1990Introduction}
	Let $\left\{ y_n  \right\}_{n=1}^{\infty}$ be a sequence of functions in $L^1[a,b]$ satisfying equation
	\begin{align*}
		y_{n+1}(t)\leq \alpha + \beta \int_a^t y_n(s)ds, \quad \forall t\in[a,b],
	\end{align*}
	where $\alpha$ and $\beta$ are constants. If $y_1(t)\equiv c$ is a constant, then the following inequality holds for any $n \geq 1$:
	\begin{align*}
		y_{n+1}(t)\leq \alpha e^{\beta (t-a)} + c \frac{\beta^n(t-a)^n}{n!}.
	\end{align*}
\end{lm}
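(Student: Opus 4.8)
The plan is to argue by induction on $n$, using the recursive inequality to reduce the bound for $y_{n+2}$ to an integral of the already-established bound for $y_{n+1}$. Throughout I take $\alpha\ge 0$ and $\beta\ge 0$, as is implicit in a Gronwall-type statement (and which is needed in the base case).

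For the base case $n=1$, I would substitute $y_1\equiv c$ directly into the hypothesis to obtain
\begin{align*}
y_2(t)\le \alpha+\beta\int_a^t c\,ds=\alpha+c\beta(t-a).
\end{align*}
Since $\beta(t-a)\ge 0$ forces $e^{\beta(t-a)}\ge 1$, one has $\alpha\le\alpha e^{\beta(t-a)}$, and hence $y_2(t)\le \alpha e^{\beta(t-a)}+c\beta(t-a)$, which is exactly the claimed inequality for $n=1$.

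For the inductive step, assume the bound holds at level $n$, i.e. $y_{n+1}(s)\le \alpha e^{\beta(s-a)}+c\frac{\beta^n(s-a)^n}{n!}$. Plugging this into the hypothesis for $y_{n+2}$ and integrating term by term, I would use the two exact computations
\begin{align*}
\beta\int_a^t \alpha e^{\beta(s-a)}\,ds=\alpha e^{\beta(t-a)}-\alpha,\qquad
\beta\int_a^t c\frac{\beta^n(s-a)^n}{n!}\,ds=c\frac{\beta^{n+1}(t-a)^{n+1}}{(n+1)!}.
\end{align*}
Adding the leading $\alpha$ carried over from the recursive inequality, the stray $-\alpha$ cancels and I obtain precisely $y_{n+2}(t)\le \alpha e^{\beta(t-a)}+c\frac{\beta^{n+1}(t-a)^{n+1}}{(n+1)!}$, which is the claim at level $n+1$, completing the induction.

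The argument is essentially mechanical; the only point requiring any care is the exact integration of the exponential term, where the factor $\beta$ and the antiderivative $\frac{1}{\beta}e^{\beta(s-a)}$ must combine so that the constant $-\alpha$ produced at the lower limit is exactly cancelled by the $\alpha$ from the recursive inequality. This cancellation is what keeps the exponential coefficient fixed at $\alpha$ across all $n$ while the polynomial remainder shrinks like $\beta^n(t-a)^n/n!$. I would also dispose of the degenerate case $\beta=0$ separately, where the bound collapses to $y_{n+1}(t)\le\alpha$ and the antiderivative $\frac{1}{\beta}e^{\beta(s-a)}$ is no longer valid.
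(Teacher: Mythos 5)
Your proof is correct. Note that the paper itself gives no proof of this lemma---it is stated as a quotation from \cite{Kuo1990Introduction}---so there is no internal argument to compare against; your induction is exactly the standard argument (it is the iteration lemma used in the Picard-iteration existence proof for SDEs), and the key cancellation you isolate, where the $-\alpha$ from integrating $\beta\alpha e^{\beta(s-a)}$ absorbs the leading $\alpha$ of the recursion, is the whole content of the proof. One point worth making explicit: the paper's statement says only that $\alpha$ and $\beta$ are constants, but as you observed, $\alpha\ge 0$ and $\beta\ge 0$ are genuinely needed, not merely convenient. Indeed, with $c=0$, $\beta>0$, $\alpha<0$ and the $y_n$ defined by equality in the recursion, one gets $y_2\equiv\alpha$ while the claimed bound is $\alpha e^{\beta(t-a)}<\alpha$ for $t>a$, so the lemma fails; similarly $\beta\ge 0$ is what licenses both your base-case estimate $\alpha\le\alpha e^{\beta(t-a)}$ and the monotonicity-of-integration step $\beta\int_a^t y_{n+1}(s)\,ds\le\beta\int_a^t(\text{bound})(s)\,ds$ in the induction. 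Your separate treatment of $\beta=0$ is also sound, and these sign restrictions are harmless for the paper's use of the lemma in Theorem 3.1, where all the quantities involved are nonnegative.
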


\begin{lm}\label{lm-conversion}
	For any fixed $x\in L^2_{ad}([a,b]\times \Omega,\mathbb{R}^n)$, if $u\in U[a,b]$, then the following SVI
	\begin{align}\label{svi-r}
		\langle F(t,\omega,x(t,\omega),u(t,\omega)),v-u(t,\omega) \rangle \geq 0,, \quad \forall v\in K, a.e. \; t\in [a,b], \; a.s. \; \omega\in \Omega
	\end{align}
	is equivalent to that, for all $v'\in U[a,b]$ with $t\in (0,T]$,
	\begin{align}\label{svi-h}
		\left\langle \tilde{F}(x,u),v'-u \right\rangle_{H_{[a,b]}} \geq 0,
	\end{align}
	where
$
\tilde{F}: L^2_{ad}([a,b]\times \Omega,\mathbb{R}^n) \times U[a,b] \to L^2_{ad}([a,b]\times \Omega,\mathbb{R}^m)
$
is defined by
$$
\tilde{F}(x,u)(s,\omega):=F(s,\omega,x(s,\omega),u(s,\omega))
$$
for all $(x,u)\in L^2_{ad}([a,b]\times \Omega,\mathbb{R}^n) \times U[a,b]$, for all $s\in [a,b]$, and all $\omega \in \Omega$.
\end{lm}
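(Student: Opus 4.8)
The plan is to establish the two implications in the equivalence separately, the implication \eqref{svi-r}$\Rightarrow$\eqref{svi-h} being a routine integration and the converse \eqref{svi-h}$\Rightarrow$\eqref{svi-r} requiring the pointwise reconstruction of a test process. For \eqref{svi-r}$\Rightarrow$\eqref{svi-h}, I would fix an arbitrary $v'\in U[a,b]$. The defining property of $U[a,b]$ gives $v'(s,\omega)\in K$ for a.e.\ $s\in[a,b]$ and a.s.\ $\omega\in\Omega$, so choosing $v=v'(s,\omega)$ in \eqref{svi-r} yields $\langle F(s,\omega,x(s,\omega),u(s,\omega)),v'(s,\omega)-u(s,\omega)\rangle\ge 0$ pointwise a.e./a.s. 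Since $\tilde F(x,u)$ and $v'-u$ both lie in $L^2_{ad}([a,b]\times\Omega,\mathbb R^m)$, the scalar integrand is integrable by Cauchy--Schwarz, and integrating in $s$ and taking expectation produces $\langle\tilde F(x,u),v'-u\rangle_{H_{[a,b]}}\ge 0$. As $v'$ was arbitrary, \eqref{svi-h} follows.

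For the converse, the idea is to insert into \eqref{svi-h} the single, canonically chosen test process that solves the pointwise problem. Fix $\alpha>0$ and define $w(s,\omega):=P_K\big(u(s,\omega)-\alpha F(s,\omega,x(s,\omega),u(s,\omega))\big)$. Granting for the moment that $w\in U[a,b]$, I would take $v'=w$ in \eqref{svi-h}. On the other hand, applying the projection theorem pointwise with the admissible vector $u(s,\omega)\in K$ gives $\langle u(s,\omega)-\alpha F(s,\omega,\ldots)-w(s,\omega),\,u(s,\omega)-w(s,\omega)\rangle\le 0$, which rearranges to $\langle F(s,\omega,\ldots),w(s,\omega)-u(s,\omega)\rangle\le-\tfrac1\alpha\|u(s,\omega)-w(s,\omega)\|^2$ a.e./a.s. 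Integrating over $[a,b]\times\Omega$ then yields $0\le\langle\tilde F(x,u),w-u\rangle_{H_{[a,b]}}\le-\tfrac1\alpha\|w-u\|_{H_{[a,b]}}^2$, which forces $w=u$ in $H_{[a,b]}$, hence $w(s,\omega)=u(s,\omega)$ for a.e.\ $s$ and a.s.\ $\omega$. Finally, the identity $u(s,\omega)=P_K\big(u(s,\omega)-\alpha F(s,\omega,\ldots)\big)$ is, by Lemma \ref{lm-p} read in $H=\mathbb R^m$ for fixed $(s,\omega)$, exactly the statement that $u(s,\omega)$ solves the pointwise VI, i.e.\ \eqref{svi-r}.

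The main obstacle is the postponed verification that $w\in U[a,b]$, that is, that $w$ is $K$-valued, $\mathcal F_s$-adapted, and square-integrable. The $K$-valuedness is immediate since $w$ is a projection onto $K$. Adaptedness follows from the continuity of $P_K$ together with the fact that $u$ and $\tilde F(x,u)$ are adapted, so $w$ is a continuous image of $\mathcal F_s$-measurable processes. For square-integrability I would use nonexpansiveness of $P_K$: fixing a reference point $p_0\in K$, one has $\|w(s,\omega)\|\le\|u(s,\omega)\|+\alpha\|F(s,\omega,x(s,\omega),u(s,\omega))\|+2\|p_0\|$, and the right-hand side lies in $L^2([a,b]\times\Omega)$ because $x\in L^2_{ad}([a,b]\times\Omega,\mathbb R^n)$, $u\in U[a,b]$, and $\tilde F$ maps into $L^2_{ad}([a,b]\times\Omega,\mathbb R^m)$ by hypothesis; hence $w\in H_{[a,b]}$. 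Note that using an arbitrary pointwise violation instead of the projection $w$ would force an extra measurable-selection-plus-truncation argument to secure integrability, whereas the explicit projection avoids it. Conceptually, these three facts are precisely what guarantees that the Hilbert-space projection onto the closed convex set $U[a,b]$ (closed and convex by Lemma \ref{ccs}) is computed pointwise, $[P_{U[a,b]}(z)](s,\omega)=P_K(z(s,\omega))$; once this interchange of projection and pointwise evaluation is in hand, the whole equivalence is just the dictionary between Lemma \ref{lm-p} applied in $H_{[a,b]}$ and Lemma \ref{lm-p} applied pointwise in $\mathbb R^m$.
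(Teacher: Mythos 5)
Your proof is correct, but it takes a genuinely different route from the paper's, most notably in the backward implication. The paper proves both directions by contradiction: for \eqref{svi-r}$\Rightarrow$\eqref{svi-h} it extracts from a violating $v_0\in U[a,b]$ a positive-measure set on which the pointwise integrand is negative; for \eqref{svi-h}$\Rightarrow$\eqref{svi-r} it posits a violating vector $u_0\in K$ with a uniform gap $\epsilon>0$ on a positive-measure set $\Lambda_\epsilon$ and inserts into \eqref{svi-h} the patched, adapted test process $\tilde u=u_0I_{\Lambda_\epsilon}+uI_{\Lambda_\epsilon^c}$, obtaining a strictly negative value. Your forward direction is instead direct (integrate the pointwise inequality against $v'(s,\omega)$), and your backward direction replaces the indicator-patching by the projection test process $w=P_K\bigl(u-\alpha\tilde F(x,u)\bigr)$, forcing $w=u$ in $H_{[a,b]}$ and reading off \eqref{svi-r} from Lemma \ref{lm-p} applied pointwise in $\mathbb{R}^m$; your verification that $w\in U[a,b]$ ($K$-valuedness, adaptedness via continuity of $P_K$, square-integrability via nonexpansiveness) is sound. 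What the paper's construction buys is elementarity: only indicator functions and adaptedness of the bad set are needed. What yours buys is that it avoids selecting a single violating vector and gap (a step the paper states rather loosely), it yields the stronger conclusion that the pointwise VI holds for \emph{all} $v\in K$ outside one null set, and it meshes naturally with the fixed-point identity $u=P_K(u-\alpha F)$ exploited throughout Section 3. One shared caveat: in your forward direction, substituting the $(s,\omega)$-dependent vector $v'(s,\omega)$ into \eqref{svi-r} tacitly requires the exceptional null set to be independent of $v$, whereas the stated quantifier order is ``for all $v$, a.e.''; this is repaired by passing to a countable dense subset of $K$ and using continuity of the inner product to produce a single null set. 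The paper's own forward direction needs exactly the same separability argument (its claim that one $v'\in K$ works on all of $\Lambda^0$ is not literally justified), so this is an expository gap common to both arguments rather than a defect of your approach.
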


\begin{proof}
	We first show that \eqref{svi-r} leads to \eqref{svi-h}. If \eqref{svi-h} false, then there exists a $v_0\in U[a,b]$ such that
	\begin{align*}
		\mathbb{E}\int_a^b \langle F(s,\omega,x(s,\omega),u(s,\omega)), v_0(s,\omega)-u(s,\omega)  \rangle ds <0, \quad \forall (s,\omega)\in [a,b]\times \Omega,
	\end{align*}
	which implies that
	\begin{align*}
		\Lambda^0:=\left\{ (s,\omega)\in [a,b]\times \Omega: \langle F(s,\omega,x(s,\omega),u(s,\omega)), v_0(s,\omega)-u(s,\omega)  \rangle <0 \right\} \neq \emptyset
	\end{align*}
	with $\int_{\Omega}\int_a^b I_{\Lambda^0}dsdP(\omega)>0$. From the  definition of $U[a,b]$, we know $v_0(s,\omega) \in K$ for a.e. $t\in [a,b]$ and a.s. $\omega \in \Omega$.
	Thus, there exists $v'\in K$ such that
	\begin{align*}
		\langle F(s,\omega,x(s,\omega),u(s,\omega)), v'-u(s,\omega) \rangle <0, \forall (s,\omega) \in \Lambda^0,
	\end{align*}
	which contradicts with \eqref{svi-r}.
	
	Next we prove that \eqref{svi-h} leads to \eqref{svi-r}. On the contrary, suppose that \eqref{svi-r} fails. Then there exists a $u_0 \in K$ and $\epsilon>0$ such that
	\begin{align*}
		\Lambda_{\epsilon}:=\left\{ (s,\omega)\in [a,b]\times \Omega: \langle F(s,\omega,x(s,\omega),u(s,\omega)), v_0(s,\omega)-u(s,\omega)  \rangle <\epsilon \right\}
	\end{align*}
	with $\int_{\Omega}\int_a^b I_{\Lambda_{\epsilon}}dsdP(\omega)>0$. Since $\langle F(s,\omega,x(s,\omega),u(s,\omega)), v_0(s,\omega)-u(s,\omega)\rangle$ is $\mathcal{F}_s$-adapted, so is  $I_{\Lambda_{\epsilon}}(s,\omega)$. Let
	$$
	\tilde{u}(s,\omega)=u_0I_{\Lambda_{\epsilon}}(s,\omega)+u(s,\omega)I_{\Lambda_{\epsilon}^c}(s,\omega),\quad \forall (t,\omega)\in [a,b]\times \Omega.
	$$
	Clearly, $\tilde{u} \in U[a,b]$. By taking $v=\tilde{u}$ in \eqref{svi-h} , one has
	\begin{align*}
		&\quad \; \mathbb{E}\int_a^b \langle  F(s,\omega,x(s,\omega),u(s,\omega)),\tilde{u}(s,\omega)-u(s,\omega) \rangle ds\\
		&=\int_{\Omega}\int_a^b I_{\Lambda_{\epsilon}}(s,\omega)\langle   F(s,\omega,x(s,\omega),u(s,\omega)),u_0-u(s,\omega)\rangle dsdP(\omega)\\
		&\leq - \epsilon\int_{\Omega}\int_a^b I_{\Lambda_{\epsilon}}dsdP(\omega)<0,
	\end{align*}
	which is a contradiction.
\end{proof}

\section{Existence and uniqueness of the solutions for SDVI}

In this section, we prove the existence and uniqueness of the solutions for SDVI. To this end, we need the following assumption and lemmas.

\begin{assumption} \label{a-e-1} Suppose that there exist a few positive constants $C$, $L_f$, $K_1$, $K_2$, $L_g$, $L_F$ with $L_F >C$ such that
	\begin{enumerate}[($\romannumeral1$)]
		\item $\|f(t,x,u)\|\leq K_1(1+\|x\|+\|u\|)$;
		\item $\|g(t,x,u)\|_{\mathbb{R}^{n\times m}} \leq K_2(1+\|x\|+\|u\|)$;
		\item $\|f(t_1,x_1,u_1)-f(t_2,x_2,u_2)\|\le L_f(|t_1-t_2|+\|x_1-x_2\|+\|u_1-u_2\|)$;
		\item $\|g(t_1,x_1,u_1)-g(t_2,x_2,u_2)\|_{\mathbb{R}^{n\times m}} \le L_g(|t_1-t_2|+\|x_1-x_2\|+\|u_1-u_2\|)$;
		\item $\|\tilde{F}(x'_1,u'_1)-\tilde{F}(x'_2,u'_2)\|_{H[0,t]} \le L_F(\|x'_1-x'_2\|_{H[0,t]}+\|u'_1-u'_2\|_{H[0,t]})$;
		\item $\left\langle \tilde{F}(x',u'_1)-\tilde{F}(x',u'_2),u'_1-u'_2 \right\rangle_{H[0,t]} \ge C\|u'_1-u'_2\|_{H[0,t]} ^2$,
	\end{enumerate}
	for all $t,t_1,t_2 \in [0,T]$, all $x,x_1,x_2 \in \mathbb{R}^n$, all $u,u_1,u_2 \in \mathbb{R}^m$, all $x',x'_1,x'_2 \in  L^2_{ad}([0,t]\times \Omega,\mathbb{R}^n)$, all $u'_1,u'_2\in U[0,t]$.
\end{assumption}

\begin{lm}\label{lm-ex-u}
For any fixed $x\in L^2_{ad}([0,b]\times \Omega,\mathbb{R}^n)$ $(0<b\leq T)$, if $\tilde{F}$ satisfies the conditions $(v)$ and $(vi)$ in Assumption \ref{a-e-1}, then there exists a unique $u\in U[0,b]$ such that
$$
\langle F(t,\omega,x(t,\omega),u(t,\omega)),v-u(t,\omega) \rangle \geq 0, \quad \forall v\in K, a.e. \; t\in [0,b], \; a.s. \; \omega\in \Omega.
$$
\end{lm}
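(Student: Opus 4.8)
The plan is to reformulate the pointwise stochastic variational inequality as a single variational inequality posed in the Hilbert space $H[0,b]:=L^2_{ad}([0,b]\times\Omega,\mathbb{R}^m)$, and then to solve the latter by the Banach contraction principle. First, by Lemma \ref{lm-conversion} (applied with $[a,b]=[0,b]$), finding $u\in U[0,b]$ that satisfies the pointwise SVI is equivalent to finding $u\in U[0,b]$ such that
$$\langle \tilde{F}(x,u),v'-u\rangle_{H[0,b]}\geq 0,\qquad \forall v'\in U[0,b].$$
By Lemma \ref{ccs}(i), $U[0,b]$ is a nonempty, closed and convex subset of $H[0,b]$, so the metric projection $P_{U[0,b]}$ onto $U[0,b]$ is well defined. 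Using the projection characterization of a variational inequality (the Brezis projection lemma, or Lemma \ref{lm-p}), for any fixed constant $\alpha>0$ this VI is equivalent to the fixed-point equation
$$u=P_{U[0,b]}\big(u-\alpha\,\tilde{F}(x,u)\big).$$

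Next I would define $\Phi:U[0,b]\to U[0,b]$ by $\Phi(u)=P_{U[0,b]}\big(u-\alpha\,\tilde{F}(x,u)\big)$, which indeed lands in $U[0,b]$ since the projection does. Because the projection is nonexpansive, for any $u_1,u_2\in U[0,b]$ we have
$$\|\Phi(u_1)-\Phi(u_2)\|_{H[0,b]}^2\leq \big\|(u_1-u_2)-\alpha\big(\tilde{F}(x,u_1)-\tilde{F}(x,u_2)\big)\big\|_{H[0,b]}^2.$$
Expanding the right-hand side via the inner product and invoking the strong monotonicity (vi) (which gives $\langle \tilde{F}(x,u_1)-\tilde{F}(x,u_2),u_1-u_2\rangle_{H[0,b]}\geq C\|u_1-u_2\|_{H[0,b]}^2$) together with the Lipschitz bound (v) of Assumption \ref{a-e-1}, the latter applied with the first argument fixed at $x$ (so that $\|\tilde{F}(x,u_1)-\tilde{F}(x,u_2)\|_{H[0,b]}\leq L_F\|u_1-u_2\|_{H[0,b]}$), yields
$$\|\Phi(u_1)-\Phi(u_2)\|_{H[0,b]}^2\leq \big(1-2\alpha C+\alpha^2 L_F^2\big)\,\|u_1-u_2\|_{H[0,b]}^2.$$

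It then remains to choose $\alpha$ so that $\theta:=\sqrt{1-2\alpha C+\alpha^2 L_F^2}<1$, which holds precisely for $0<\alpha<2C/L_F^2$; any such $\alpha$ makes $\Phi$ a contraction. Since $U[0,b]$ is a closed subset of the complete space $H[0,b]$, it is itself a complete metric space, so the Banach fixed-point theorem provides a unique fixed point $u^\ast\in U[0,b]$ of $\Phi$. By the equivalences established above, $u^\ast$ is the unique solution of the Hilbert-space VI and hence, again through Lemma \ref{lm-conversion}, the unique element of $U[0,b]$ solving the pointwise SVI, which proves the lemma.

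The main obstacle is the contraction estimate: one must simultaneously exploit nonexpansiveness of the projection, the Lipschitz property (v), and the strong monotonicity (vi), and then verify that the admissible interval $(0,2C/L_F^2)$ for the step size $\alpha$ is nonempty, which is guaranteed by $C,L_F>0$ (and is consistent with the standing hypothesis $L_F>C$). A secondary point requiring care is that Lemma \ref{lm-p} presupposes an operator defined on all of $H$, whereas $\tilde{F}(x,\cdot)$ is defined only on $U[0,b]$; this is harmless, since one may either work directly with the projection characterization supplied by the Brezis projection lemma or extend $\tilde{F}(x,\cdot)$ off $U[0,b]$ in an arbitrary way, neither of which affects the fixed-point formulation on $U[0,b]$.
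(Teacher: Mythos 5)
Your proposal is correct and follows essentially the same route as the paper's proof: both reduce the pointwise SVI to the Hilbert-space VI via Lemma \ref{lm-conversion}, recast it as the projection fixed-point equation, and derive the same contraction factor $1-2\rho C+\rho^2L_F^2$ with step size in $(0,2C/L_F^2)$ from nonexpansiveness of $P_{U[0,b]}$ together with conditions $(v)$ and $(vi)$. The only difference is cosmetic: the paper writes out the Picard iteration $u_{n+1}=P_{U[0,b]}[u_n-\rho\tilde{F}(x,u_n)]$ explicitly, whereas you invoke the Banach fixed-point theorem directly, which is the same argument.
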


\begin{proof}
	From Lemma \ref{lm-conversion}, we know that the problem is equivalent to finding a unique $u\in U[0,b]$ such that
	$$
	\left\langle \tilde{F}(x,u),v'-u \right\rangle_{H_{[0,b]}} \geq 0, \quad \forall v'\in U[0,b].
	$$
	Consider the following iterative sequence:
	$$
	u_{n+1}=P_{U[0,b]}[u_n-\rho\tilde{F}(x,u_n)], \quad \rho >0.
	$$
	By the conditions $(v)$ and $(vi)$ in Assumption 3.1, one has
	\begin{align*}
		\|u_{n+1}-u_{n}\|^2_{H_{[0,b]}}&=\|P_{U[0,b]}[u_n-\rho\tilde{F}(x,u_n)]-P_{U[0,b]}[u_{n-1}-\rho\tilde{F}(x,u_{n-1})]\|^2_{H_{[0,b]}}\\
		&\leq \|u_n-u_{n-1}-\rho \tilde{F}(x,u_n)+\rho \tilde{F}(x,u_{n-1})\|^2_{H_{[0,b]}}\\
		&=\|u_n-u_{n-1}\|^2_{H_{[0,b]}}+\rho^2\|\tilde{F}(x,u_{n-1})-\tilde{F}(x,u_n)\|^2_{H_{[0,b]}}\\
		&\quad \: -2\rho \langle \tilde{F}(x,u_{n})-\tilde{F}(x,u_{n-1}),u_n-u_{n-1} \rangle_{H_{[0,b]}}\\
		&\leq (1-2\rho C +\rho^2 L^2_F)	\|u_{n+1}-u_{n}\|^2_{H_{[0,b]}},
	\end{align*}
	where $ 0<\rho <\frac{2C}{L^2_F}$, which implies that there exists a unique $u^*\in L^2_{ad}([0,b]\times \Omega,\mathbb{R}^m)$ such that $u^*=P_{U[0,b]}[u^*-\rho\tilde{F}(x,u^*)]$. It follows from Lemma \ref{lm-p} that
	$$
	\left\langle \tilde{F}(x,u^*),v'-u^* \right\rangle_{H_{[0,b]}} \geq 0, \quad \forall v'\in U[0,b].
	$$
	This ends the proof.
\end{proof}

\begin{lm}\label{lm-u-bd}
	For each $j=1,2$ and fixed $x_j \in L^2_{ad}([0,T]\times \Omega,\mathbb{R}^n)$, under the conditions $(v)$ and $(vi)$ in Assumption \ref{a-e-1}, there exists a unique $u_j\in [0,T]$ such that
	$$
	\langle F(t,\omega,x_j(t,\omega),u_j(t,\omega)),v-u_j(t,\omega) \rangle \geq 0, \quad \forall v\in K, a.e. \; t\in [0,T], \; a.s. \; \omega\in \Omega.
	$$
	Moreover, there exists a constant $M'>0$ such that
	\begin{align*}
		\mathbb{E}\int_0^t\|u_1(s,\omega)-u_2(s,\omega)\|^2ds\leq M' \mathbb{E}\int_0^t\|x_1(s,\omega)-x_2(s,\omega)\|^2ds,\forall t\in[0,T].
	\end{align*}
\begin{proof}
	For each $j=1,2$, it follows from Lemma \ref{lm-ex-u} that there exists a unique $u_j \in U[0,T]$ such that
	$$
	\langle F(s,\omega,x_j(s,\omega),u_j(s,\omega)),v-u_j(s,\omega) \rangle \geq 0, \quad \forall v\in K, a.e. \; s\in [0,T], \; a.s. \; \omega\in \Omega.
	$$
	By Lemmas \ref{lm-p} and \ref{lm-conversion},  we obtain $u_j =P_{U[0,t]}[u_j-\rho \tilde{F}(x_j,u_j)]$ for all $t\in [0,T]$, $j=1,2$.
	Now the conditions $(v)$ and $(vi)$ in Assumption \ref{a-e-1} implies that
	\begin{align}\label{lm-u-bd-1}
		&\|u_1-u_2+\rho \tilde{F}(x_1,u_2)-\rho \tilde{F}(x_1,u_1)\|^2_{H[0,t]}  \\
		=&\|u_1-u_2\|^2_{H[0,t]}+\rho^2\|\tilde{F}(x_1,u_2)-\rho \tilde{F}(x_1,u_1)\|^2_{H[0,t]}\nonumber \\
		&-2\rho \langle \tilde{F}(x_1,u_1)- \tilde{F}(x_1,u_2),u_1-u_2  \rangle_{H[0,t]}\nonumber \\
		\leq& (1-2\rho C+\rho^2L^2_F)\mathbb{E}\|u_1-u_2\|^2_{H[0,t]},\nonumber
	\end{align}
where $0 < \rho < \frac{2C}{L^2_F}$. Applying \eqref{lm-u-bd-1} and the nonexpansiveness of $P_{U[0,t]}$, one has
\begin{align*}
	&\|u_1-u_2\|_{H[0,t]}\\
	=&\|P_{U[0,t]}[u_1-\rho\tilde{F}(x_1,u_1)]-P_{U[0,t]}[u_1-\rho\tilde{F}(x_1,u_1)]\|_{H[0,t]}\nonumber\\
	\leq& \|u_1-u_2-\rho\tilde{F}(x_1,u_1)+\rho\tilde{F}(x_2,u_2)\|_{H[0,t]}\nonumber \\
	=&\|u_1-u_2+\rho\tilde{F}(x_2,u_2)-\rho\tilde{F}(x_1,u_2)+rho\tilde{F}(x_1,u_2)-\rho\tilde{F}(x_2,u_2)\|_{H[0,t]}\nonumber \\
	\leq& \|u_1-u_2+\rho\tilde{F}(x_2,u_2)-\rho\tilde{F}(x_1,u_2)\|_{H[0,t]}+\rho \| \tilde{F}(x_1,u_2)-\tilde{F}(x_2,u_2)\|_{H[0,t]}\nonumber \\
	\leq& \sqrt{1-2\rho C+\rho^2L^2_F}\|u_1-u_2  \|_{H[0,t]} +\rho L_F \| x_1-x_2 \|_{H[0,t]},
\end{align*}
	which implies that
\begin{align*}
	\mathbb{E}\int_0^t\|u_1(s,\omega)-u_2(s,\omega)\|^2ds\leq M' \mathbb{E}\int_0^t\|x_1(s,\omega)-x_2(s,\omega)\|^2ds,\forall t\in[0,T],
\end{align*}
where $M'=\frac{\rho^2L^2_F}{(1-\sqrt{1-2\rho C+\rho^2L^2_F})^2}$.
\end{proof}

\end{lm}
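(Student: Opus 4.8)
The plan is to treat the two assertions separately. The existence and uniqueness of each $u_j$ is an immediate application of Lemma \ref{lm-ex-u} with $b=T$: for the fixed $x_j\in L^2_{ad}([0,T]\times\Omega,\mathbb{R}^n)$, conditions $(v)$ and $(vi)$ already guarantee a unique $u_j\in U[0,T]$ solving the pointwise SVI. So the real content of the lemma is the quantitative comparison estimate, which I would derive through the projection (fixed-point) reformulation supplied by Lemmas \ref{lm-conversion} and \ref{lm-p}.

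First I would note that the estimate is claimed for \emph{every} $t\in[0,T]$, so I must work on each truncated interval $[0,t]$ rather than only on $[0,T]$. The key point is that, since the SVI holds pointwise a.e.\ on $[0,T]$, its restriction to $[0,t]$ is again a pointwise-a.e.\ SVI on $[0,t]$; hence Lemma \ref{lm-conversion} converts it into the Hilbert-space inequality $\langle\tilde{F}(x_j,u_j),v'-u_j\rangle_{H[0,t]}\ge 0$ for all $v'\in U[0,t]$, and Lemma \ref{lm-p} yields the fixed-point identity
$$u_j=P_{U[0,t]}\big[u_j-\rho\,\tilde{F}(x_j,u_j)\big],\qquad j=1,2,$$
for any $\rho>0$, where $u_j$ now denotes the restriction to $[0,t]$.

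Next I would subtract the two identities and invoke the nonexpansiveness of $P_{U[0,t]}$ to bound $\|u_1-u_2\|_{H[0,t]}$ by $\|(u_1-u_2)-\rho(\tilde{F}(x_1,u_1)-\tilde{F}(x_2,u_2))\|_{H[0,t]}$. The decisive algebraic step is to insert and subtract $\tilde{F}(x_1,u_2)$, splitting the perturbation into a ``$u$-part'' $\tilde{F}(x_1,u_1)-\tilde{F}(x_1,u_2)$ and an ``$x$-part'' $\tilde{F}(x_1,u_2)-\tilde{F}(x_2,u_2)$. For the $u$-part I would expand the square and combine the strong-monotonicity bound $(vi)$ with the Lipschitz bound $(v)$ (both at the common first argument $x_1$) to extract the contraction factor $\sqrt{1-2\rho C+\rho^2L_F^2}\,\|u_1-u_2\|_{H[0,t]}$; for the $x$-part I would apply $(v)$ directly to get $\rho L_F\|x_1-x_2\|_{H[0,t]}$. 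The triangle inequality then gives
$$\|u_1-u_2\|_{H[0,t]}\le\sqrt{1-2\rho C+\rho^2L_F^2}\,\|u_1-u_2\|_{H[0,t]}+\rho L_F\|x_1-x_2\|_{H[0,t]}.$$

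Finally I would fix $\rho\in(0,2C/L_F^2)$ so that $q:=\sqrt{1-2\rho C+\rho^2L_F^2}<1$, absorb the $u$-term on the left, and square, obtaining the claim with $M'=\rho^2L_F^2/(1-q)^2$ after rewriting the $H[0,t]$-norms as $\mathbb{E}\int_0^t\|\cdot\|^2\,ds$. The hypothesis $L_F>C$ enters precisely here: it forces the quadratic $1-2\rho C+\rho^2L_F^2$ to stay strictly positive for every $\rho$ (its discriminant in $\rho$ is $4(C^2-L_F^2)<0$), so that $q$ is a well-defined real number lying in $(0,1)$ on the admissible range of $\rho$. The only genuinely delicate step is the localization to $[0,t]$: one must be certain that the pointwise formulation of the SVI lets the solution on $[0,T]$ restrict to a solution on each $[0,t]$, so that the projection identity, and hence the contraction estimate, holds for every $t$ and not merely for $t=T$. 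Everything beyond that is a routine strong-monotonicity contraction computation.
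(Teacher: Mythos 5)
Your proposal is correct and follows essentially the same route as the paper's proof: existence/uniqueness via Lemma \ref{lm-ex-u}, the projection fixed-point identity $u_j=P_{U[0,t]}\left[u_j-\rho\tilde{F}(x_j,u_j)\right]$ on each truncated interval from Lemmas \ref{lm-p} and \ref{lm-conversion}, the insertion of $\tilde{F}(x_1,u_2)$ to split the perturbation into a strongly monotone $u$-part and a Lipschitz $x$-part, and absorption of the contraction factor $\sqrt{1-2\rho C+\rho^2L_F^2}$ to arrive at the same constant $M'=\rho^2L_F^2/(1-\sqrt{1-2\rho C+\rho^2L_F^2})^2$. If anything, your explicit treatment of the localization to $[0,t]$ and of why $L_F>C$ keeps the contraction factor a well-defined number in $(0,1)$ is more careful than the paper, which passes over both points silently.
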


In the sequel, we omit $\omega$ in $x(t,\omega)$ and $
u(t,\omega)$ sometimes without ambiguity.

Let $SOL(U[0,T],F(t,\omega,x(t),u(t)))$ be the set of solutions for following SVI: find $u(t) \in U[0,T]$ such that
\begin{align}\label{S2}
	\left\langle F(t,\omega,x(t),u(t)),v-u(t) \right\rangle \geq 0, \quad \forall v\in K, a.e. \; t\in [0,T], \; a.s. \; \omega\in \Omega.
\end{align}
A pair $(x(t),u(t))$ is called a Carath\'{e}odory solution of problem SDVI \eqref{S1} with $\Gamma(x(0),x(T))=x(0)-x_0$ if and only if $x(t)$ belongs to $L^2_{ad}([0,T]\times \Omega,\mathbb{R}^n)$ such that, for any $t\in [0,T]$,
$$
dx(t)=f(t,x(t),u(t))dt+g(t,x(t),u(t))dB_t,\quad x(0)=x_0
$$
and $u(t)\in SOL(U[0,T],F(t,\omega,x(t),u(t)))$.

Now we give the main result of this section as follows.

\begin{theorem}\label{thm-e}
	Under Assumption \ref{a-e-1}, SDVI \eqref{S1} admits a unique Carath\'{e}odory solution $(x(t),u(t))$.
\end{theorem}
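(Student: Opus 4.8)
The plan is to decouple the two subproblems in \eqref{S1} through the solution operator of the stochastic variational inequality and then run a successive-approximation (Picard) argument on the resulting stochastic differential equation. For a fixed adapted process $x$, Lemma \ref{lm-ex-u} yields a unique $u=:S(x)\in U[0,T]$ solving the SVI pointwise, and Lemma \ref{lm-u-bd} shows that this operator satisfies the integrated Lipschitz estimate $\mathbb{E}\int_0^t\|S(x_1)-S(x_2)\|^2\,ds\leq M'\,\mathbb{E}\int_0^t\|x_1-x_2\|^2\,ds$ for all $t\in[0,T]$. Substituting $u=S(x)$ into the SDE converts \eqref{S1} into a single stochastic integral equation in $x$ alone, to which the iteration applies. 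Since the SVI is posed pointwise in $t$ and $S(x)\in U[0,T]$ is adapted by construction, the substituted coefficients remain adapted, so the stochastic integral is well defined.

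First I would set $x^{(0)}(t)\equiv x_0$ and, given $x^{(n)}$ with $u^{(n)}:=S(x^{(n)})$, define
$$x^{(n+1)}(t)=x_0+\int_0^t f(s,x^{(n)}(s),u^{(n)}(s))\,ds+\int_0^t g(s,x^{(n)}(s),u^{(n)}(s))\,dB_s.$$
Using the linear growth bounds $(i)$--$(ii)$ of Assumption \ref{a-e-1} together with $u^{(n)}\in U[0,T]\subset L^2_{ad}([0,T]\times\Omega,\mathbb{R}^m)$, each $x^{(n+1)}$ is well defined, adapted, and square-integrable; this inductive step keeps the construction inside the correct spaces.

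Next I would estimate consecutive differences. Writing $\phi_n(t)=\mathbb{E}\|x^{(n+1)}(t)-x^{(n)}(t)\|^2$, I would split the drift and diffusion parts, apply the Cauchy--Schwarz inequality to the former and the It\^o isometry to the latter, and invoke the Lipschitz conditions $(iii)$--$(iv)$ to get
$$\phi_n(t)\leq C_0\int_0^t\big(\mathbb{E}\|x^{(n)}-x^{(n-1)}\|^2+\mathbb{E}\|u^{(n)}-u^{(n-1)}\|^2\big)\,ds,$$
with $C_0$ depending on $T,L_f,L_g$. The decisive reduction is to eliminate the $u$-term via Lemma \ref{lm-u-bd}, which gives $\phi_n(t)\leq\widetilde{C}\int_0^t\phi_{n-1}(s)\,ds$ with $\widetilde{C}=C_0(1+M')$. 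After bounding $\phi_0$ by a constant $c$ on $[0,T]$ using the growth conditions, Lemma \ref{lm-sg} (with $\alpha=0$, $\beta=\widetilde{C}$, $a=0$) yields $\phi_n(t)\leq c\,\widetilde{C}^n t^n/n!$. Since $\sum_n\big(c\,\widetilde{C}^nT^n/n!\big)^{1/2}<\infty$, the sequence $\{x^{(n)}\}$ is Cauchy in $\sup_{t\in[0,T]}(\mathbb{E}\|\cdot\|^2)^{1/2}$, hence converges to some $x\in L^2_{ad}([0,T]\times\Omega,\mathbb{R}^n)$; by Lemma \ref{lm-u-bd} the $u^{(n)}$ converge to $u:=S(x)$.

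Finally I would pass to the limit in the integral equation — using the Lipschitz bounds and the It\^o isometry once more to pass the limit through the drift and stochastic integrals — to conclude that $(x,u)$ is a Carath\'{e}odory solution. Uniqueness follows from the same mechanism: if $(x_1,u_1)$ and $(x_2,u_2)$ both solve \eqref{S1} then $u_j=S(x_j)$ and $\mathbb{E}\|x_1(t)-x_2(t)\|^2\leq\widetilde{C}\int_0^t\mathbb{E}\|x_1(s)-x_2(s)\|^2\,ds$, so Gronwall's inequality forces $x_1\equiv x_2$ and hence $u_1\equiv u_2$. I expect the main obstacle to be the middle step: correctly combining the It\^o isometry with the \emph{integrated} Lipschitz estimate of Lemma \ref{lm-u-bd} so that the $u$-increments are absorbed into the $x$-increments, producing the clean recursion $\phi_n(t)\leq\widetilde{C}\int_0^t\phi_{n-1}(s)\,ds$ that feeds Lemma \ref{lm-sg}; maintaining adaptedness of $u^{(n)}=S(x^{(n)})$ throughout, so the stochastic integrals are meaningful, is the accompanying technical point.
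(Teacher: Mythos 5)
Your proposal is correct and follows essentially the same route as the paper: a Picard iteration on the SDE after reducing the SVI to its solution operator via Lemma \ref{lm-ex-u}, absorbing the $u$-increments into the $x$-increments by the integrated Lipschitz bound of Lemma \ref{lm-u-bd}, invoking Lemma \ref{lm-sg} to obtain the factorial decay of successive differences, and closing uniqueness with Gronwall's inequality. The only difference is presentational — you package the SVI solution map as an operator $S(x)$, while the paper carries it explicitly as the projection fixed point $u^{(n)}_t=P_{U[0,T]}\left(u^{(n)}_t-\rho\tilde{F}(x^{(n)}_t,u^{(n)}_t)\right)$ — but the substance is identical.
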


\begin{proof}
 Taking $x^{(1)}_t=x_0$ for all $t\in[0,T]$, it follows from Lemmas \ref{lm-p}, \ref{lm-conversion} and \ref{lm-ex-u} that there exists a unique $u^{(1)}_t\in U[0,T]$ such that $u^{(1)}_t=P_{U[0,T]}[u^{(1)}_t-\rho \tilde{F}(x^{(1)}_t,u^{(1)}_t)]$. Let $x^{(2)}_t=x_0+\int_0^t f(s,x^{(1)}_s,u^{(1)}_s)ds + \int_0^t g(s,x^{(1)}_s,u^{(1)}_s) dB_s$.
 Again, by Lemmas \ref{lm-p}, \ref{lm-conversion} and \ref{lm-ex-u}, we know there exists a unique $u^{(2)}_t\in U[0,T]$ such that $u^{(2)}_t=P_{U[0,T]}[u^{(2)}_t-\rho \tilde{F}(x^{(2)}_t,u^{(2)}_t)]$.
 Repeating this process, we can construct the sequence $\left\{ (x^{(n)}_t, u^{(n)}_t) \right\}$ satisfying
\begin{equation}\label{interation}
	\left\{
	\begin{aligned}
		&u^{(n)}_t=P_{U[0,T]}\left(u^{(n)}_t-\rho \tilde{F}(x^{(n)}_t,u^{(n)}_t)\right),\\
		&x^{(n+1)}_t=x_0+\int_0^t f(s,x^{(n)}_s,u^{(n)}_s)ds + \int_0^t g(s,x^{(n)}_s,u^{(n)}_s) dB_s,
	\end{aligned}
	\right.
\end{equation}
where $0 < \rho < \frac{2C}{L^2_F}$, $n=1,2,\cdots$.

First we show that $(x^{(n+1)}_t, u^{(n+1)}_t)\in L^2_{ad}([0,T]\times \Omega,\mathbb{R}^n)\times {U[0,T]}$ by the induction. Obviously, $x^{(1)}_t=x_0$  for all $t\in [0,T]$ and $u^{(1)}_t=P_{U[0,T]}[u^{(1)}_t-\rho \tilde{F}(x^{(1)}_t,u^{(1)}_t)]$ lead to $(x^{(1)}_t, u^{(1)}_t)\in L^2_{ad}([0,T]\times \Omega,\mathbb{R}^n)\times {U[0,T]}$. Assume by the induction that $(x^{(n)}_t, u^{(n)}_t)\in L^2_{ad}([0,T]\times \Omega,\mathbb{R}^n)\times {U[0,T]}$. Then it follows from conditions $(i)$ and $(ii)$ in Assumption \ref{a-e-1} that
\begin{equation}\label{xn-1}
	\aligned
	&\quad \;\mathbb{E}\left[\int_0^{T}g(t,x^{(n)}_t,u^{(n)}_t)dB_t\right]^2=\mathbb{E}\left[\int_0^{T}\|g(t,x^{(n)}_t,u^{(n)}_t)\|_{\mathbb{R}^{n\times m}}^2dt\right]\\
	& \le 3K_2^2T+3K_2^2\mathbb{E}\left[\int_0^{T}\left(\|x^{(n)}_t\|^2+\|u^{(n)}_t\|^2\right)dt\right]<+\infty
	\endaligned
\end{equation}
and
\begin{equation}\label{xn-1*}
	\aligned
	&\quad \;\mathbb{E}\left[\int_0^{T}f(t,x^{(n)}_t,u^{(n)}_t)dt\right]^2\\
	&\le 3K_1^2T\mathbb{E}\left[\int_0^{T}\left(1+\|x^{(n)}_t\|^2+\|u^{(n)}_t\|^2\right)dt\right]<+\infty.
	\endaligned
\end{equation}
Moreover, we have
\begin{eqnarray}\label{xn-2}
	&&\quad \;\mathbb{E}\|x^{(n+1)}_t\|^2\\
	&&\le3\mathbb{E}\left[x_0^2+\left[\int_0^{T}f(t,x^{(n)}_t,u^{(n)}_t)dt\right]^2+\left[\int_0^{T}g(t,x^{(n)}_t,u^{(n)}_t)dB_t\right]^2\right].\nonumber
\end{eqnarray}
Combining \eqref{xn-1} to \eqref{xn-2}, we know that $\mathbb{E}\left[\int_0^{T}|x^{(n+1)}_t|^2dt\right]<\infty$ and so $x^{(n+1)}_t\in L^2_{ad}([0,T]\times \Omega,\mathbb{R}^n)$. By \eqref{interation} and the definition of $P_{U[0,T]}$, one has $u^{(n+1)}_t\in U[0,T]$.

Next we prove that $\left\{(x^{n}_t,u^{n}_t)\right\}$ is a Cauchy sequence in $L^2_{ad}([0,T]\times \Omega,\mathbb{R}^n) \times U[0,T]$. It follows from Cauchy-Schwartz's inequality and the conditions $(iii)$ and $(iv)$ in Assumption \ref{a-e-1} that
\begin{align}\label{thm-sdvi-x-1}
	&\quad \;\mathbb{E}\|x^{(n+1)}_t-x^{(n)}_t\|^2 \\
	&\le 2\mathbb{E}\Bigg[\left(\int_0^t\left(f(s,x^{(n)}_s,u^{(n)}_s)-f(s,x^{(n-1)}_s,u^{(n-1)}_s)\right) ds\right)^2\nonumber\\
	&\quad\;+ \left(\int_0^t\left(g(s,x^{(n)}_s,u^{(n)}_s)-g(s,x^{(n-1)}_s,u^{(n-1)}_s)\right) dB_s\right)^2\Bigg]\nonumber\\
	&\le4L_g^2
	\left(\int_0^t \mathbb{E}\|x^{(n)}_s-x^{(n-1)}_s\|^2 ds +\int_0^t \mathbb{E}\|u^{(n)}_s-u^{(n-1)}_s\|^2 ds\right)\nonumber\\
	&\quad\;+4t_0 L_f^2\left(\int_0^t \mathbb{E}\|x^{(n)}_s-x^{(n-1)}_s\|^2 ds +\int_0^t \mathbb{E}\|u^{(n)}_s-u^{(n-1)}_s\|^2 ds\right)\nonumber\\
	&=4(L_g^2+TL_f^2)\left(\int_0^t \left(\mathbb{E}\|x^{(n)}_s-x^{(n-1)}_s\|^2 + \mathbb{E}\|u^{(n)}_s-u^{(n-1)}_s\|^2\right) ds\right).\nonumber
\end{align}
On the other hand, from Lemma \ref{lm-u-bd}, we know there exists a constant $M'>0$, for all $t\in[0,T]$, such that
\begin{align}\label{thm-sdvi-u}
	\mathbb{E}\int_0^t\|u^{(n)}(s,\omega)-u^{(n-1)}(s,\omega)\|^2ds\leq M' \mathbb{E}\int_0^t\|x^{(n)}(s,\omega)-x^{(n-1)}(s,\omega)\|^2ds.
\end{align}
Combining \eqref{thm-sdvi-x-1} and \eqref{thm-sdvi-u}, for all $t\in[0,T]$, one has
\begin{align}\label{thm-sdvi-x-2}
	\mathbb{E}\|x^{(n+1)}_t-x^{(n)}_t\|^2 \leq \beta \mathbb{E}\int_0^t\|x^{(n)}(s,\omega)-x^{(n-1)}(s,\omega)\|^2ds,
\end{align}
where $\beta=4(L_g^2+TL_f^2)(1+M')$.
Moreover, the conditions $(i)$ and $(iii)$ in Assumption \ref{a-e-1} lead to
\begin{align}\label{thm-sdvi-u-2}
	\mathbb{E}\|x^{(2)}_t-x^{(2)}_t\|^2 \leq 6(K^2_1+K^2_2)\mathbb{E}\int_0^t(1+\|x_0\|^2+\|u^{(1)}_t\|^2)ds.
\end{align}
It follows from \eqref{thm-sdvi-x-2}, \eqref{thm-sdvi-u-2} and Lemma \ref{lm-sg} that
\begin{align*}
\mathbb{E}\|x^{(n+1)}_t-x^{(n)}_t\|^2 \leq \eta \frac{\beta^{n-1}t^n}{n!},
\end{align*}
which implies that
$$
\|x^{(n+1)}-x^{(n)}\|^2_{H[0,T]}\leq \eta \frac{\beta^{n-1}T^{n+1}}{n!} \rightarrow 0, \quad (n \rightarrow \infty),
$$
where $\eta=6(K^2_1+K^2_2)(1+\|x_0\|^2_{H[0,T]}+\| u^{(1)}_t\|^2_{H[0,T]})$.
Now Lemma \ref{lm-u-bd} implies that $\left\{(x^{n}_t,u^{n}_t)\right\}$ is a Cauchy sequence in $L^2_{ad}([0,T]\times \Omega,\mathbb{R}^n) \times U[0,T]$.

Therefore, there exists a couple $(x^*(t),u^*(t))\in L^2_{ad}([0,T]\times \Omega,\mathbb{R}^n) \times U[0,T]$ such that
$(x^{(n)}(t),u^{(n)}(t))\to (x^*(t),u^*(t))$. Furthermore, it follows from the continuity of $P_{U[0,T]}$ and dominated convergence theorem that
\begin{align*}
	\left\{
	\begin{aligned}
		u^*_t&=\lim \limits_{n\rightarrow\infty}P_{U[0,T]}\left(u^{(n)}_t-\rho \tilde{F}(x^{(n)}_t,u^{(n)}_t)\right)\\
		&=P_{U[0,T]}\bigg(u^*_t-\rho \tilde{F}(x^*_t,u^*_t)\bigg),\\
		x^*_t&=x_0+\lim \limits_{n\rightarrow\infty}\int_0^t f(s,x^{(n)}_s,u^{(n)}_s)ds + \lim \limits_{n\rightarrow\infty}\int_0^t g(s,x^{(n)}_s,u^{(n)}_s) dB_s\\
		&=x_0+\int_0^t f(s,x^*_s,u^*_s)ds+\int_0^t g(s,x^*_s,u^*_s) dB_s,
	\end{aligned}
	\right.
\end{align*}
which implies that the limit $(x^*_t,u^*_t)$ is a Carath\'{e}odory solution for SDVI \eqref{S1} with $\Gamma(x(0),x(T))=x(0)-x_0$ in $L^2_{ad}([0,T]\times \Omega,\mathbb{R}^n) \times U[0,T]$.

Finally, we show the uniqueness of the Carath\'{e}odory solutions for SDVI \eqref{S1} with $\Gamma(x(0),x(T))=x(0)-x_0$. Suppose there exist $(x_i,u_i)\in L^2_{ad}([0,T]\times \Omega,\mathbb{R}^n) \times U[0,T]$ $(i=1,2)$ satisfying
	\begin{align}\label{thm-sdvi-unique-1}
	\left\{
	\begin{aligned}
		u_i(t)&=P_{U[0,T]}\left(u_i(t)-\rho \tilde{F}(x_i(t),u_i(t))\right),\\
		x_i(t)&=x_0+\int_0^t f(s,x_i(s),u_i(s))ds + \int_0^t g(s,x_i(s),u_i(s))dB_s.
	\end{aligned}
	\right.
\end{align}
Then, similar to the proof of \eqref{thm-sdvi-x-2}, it follows from \eqref{thm-sdvi-unique-1} and the conditions $(i)$ to $(iv)$ in Assumption \ref{a-e-1} that
\begin{align*}
	\mathbb{E}\|x_1-x_2\|^2 \leq \beta \mathbb{E}\int_0^t\|x_1(s,\omega)-x_2(s,\omega)\|^2ds,\forall t\in[0,T],
\end{align*}
where $\beta=4(L_g^2+TL_f^2)(1+M')$. By Gronwall's inequality, we have $	\mathbb{E}\|x_1-x_2\|^2=0$. Now Lemma \ref{lm-u-bd} leads to
$$
\|x_1-x_2\|_{H[0,T]}=0, \quad \|u_1-u_2\|_{H[0,T]}=0.
$$
This show the uniqueness of the Carath\'{e}odory solutions.
\end{proof}

\section{Convergence of Euler scheme for solving SDVI}

In this section, we prove the convergence of Euler scheme  constructed by \eqref{S1-e}. To this end,  we need the following hypotheses and lemmas.

\begin{assumption} \label{a-e} Suppose that there exist positive constants $C$, $L_f$, $L_g$, and $L_F$ with $L_F >C$ such that
\begin{enumerate}[($\romannumeral1$)]
\item $\|f(t_1,x_1,u_1)-f(t_2,x_2,u_2)\|\le L_f(|t_1-t_2|+\|x_1-x_2\|+\|u_1-u_2\|)$;
\item $\|g(t_1,x_1,u_1)-g(t_2,x_2,u_2)\|_{\mathbb{R}^{n\times m}} \le L_g(|t_1-t_2|+\|x_1-x_2\|+\|u_1-u_2\|)$;
\item $\|F(t_1,\omega,x_1,u_1)-F(t_2,\omega,x_2,u_2)\| \le L_F(|t_1-t_2|+\|x_1-x_2\|+\|u_1-u_2\|)$, a.s. $\omega \in \Omega$;
\item $\left\langle F(t,\omega,x,u_1)-F(t,\omega,x,u_2),u_1-u_2 \right\rangle \ge C\|u_1-u_2\|^2$, a.s. $\omega \in \Omega$,
\end{enumerate}
for all $t, t_1,t_2 \in [0,T]$, all $x,x_1,x_2 \in \mathbb{R}^n$, all $u_1,u_2 \in \mathbb{R}^m$.
\end{assumption}

\begin{remark}\label{lgc}
Clearly, $(i)$ and $(ii)$ imply that $f(t,x,u)$ and $g(t,x,u)$ have linear growth in $x$ and $u$, i.e., there exist two constants $K_1>0$ and $K_2>0$ conforming to
$\|f(t,x,u)\|\le K_1(1+\|x\|+\|u\|)$ and $\|g(t,x,u)\|_{\mathbb{R}^{n\times m}} \le K_2(1+\|x\|+\|u\|)$ for all $(t,\omega) \in [0,T]\times \Omega$. It is worth noting that Assumption \ref{a-e-1} is a speacil case of Assumption \ref{a-e}.
\end{remark}

\begin{lm}\label{lm-h-e}
Let $F$ satisfy the conditions $(iii)$ and $(iv)$ in Assumption \ref{a-e}. Then $u_h(t)$ solves the following stochastic variational inequality
\begin{align}\label{sdviL^2-1}
&\mathbb{E}\int_{t_{i}}^t \left\langle F(t_i,\omega,x_h(t_i,\omega),u_h(s,\omega)),v(s,\omega)-u_h(s,\omega) \right\rangle ds \geq 0
\end{align}
for all $v \in U[t_i,t]$ and all $t\in[t_i,t_{i+1})$ with $i=0,1,\cdots,N$.

\end{lm}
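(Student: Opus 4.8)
The plan is to show that the pointwise stochastic variational inequality built into the Euler scheme \eqref{S1-e} forces the integrand in \eqref{sdviL^2-1} to be nonnegative at almost every point $(s,\omega)$, so that nonnegativity survives integration in $s$ and the expectation over $\omega$. This is precisely the easy ``pointwise $\Rightarrow$ integral'' direction already carried out in the proof of Lemma \ref{lm-conversion}, now specialized to a single mesh subinterval on which the control $u_h$ is frozen.

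First I would recall the construction of $u_h$ on $[t_i,t_{i+1})$: by \eqref{S1-e} the control is constant there, $u_h(s,\omega)=u_h(t_i,\omega)$, and $u_h(t_i,\cdot)$ is the selection satisfying
$$\langle F(t_i,\omega,x_h(t_i,\omega),u_h(t_i,\omega)),\, v-u_h(t_i,\omega)\rangle \ge 0, \quad \forall v\in K,$$
for almost every $\omega$. Because $K\subset\mathbb{R}^m$ is separable and $v\mapsto F(t_i,\omega,x_h(t_i,\omega),\cdot)$ is continuous (condition $(iii)$ in Assumption \ref{a-e}), the exceptional null set can be taken independently of $v$, so that for a.s. $\omega$ the displayed inequality holds simultaneously for every $v\in K$.

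Next, fix $t\in[t_i,t_{i+1})$ and take an arbitrary test process $v\in U[t_i,t]$. By the definition of $U[t_i,t]$ we have $v(s,\omega)\in K$ for a.e. $s\in[t_i,t]$ and a.s. $\omega$. For each such pair $(s,\omega)$ I would substitute the admissible vector $v=v(s,\omega)$ into the pointwise inequality above and use $u_h(s,\omega)=u_h(t_i,\omega)$ to obtain
$$\langle F(t_i,\omega,x_h(t_i,\omega),u_h(s,\omega)),\, v(s,\omega)-u_h(s,\omega)\rangle \ge 0.$$
Thus the integrand in \eqref{sdviL^2-1} is nonnegative almost everywhere on $[t_i,t]\times\Omega$.

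Finally, since a nonnegative integrand has nonnegative integral, integrating over $s\in[t_i,t]$ and taking the expectation yields \eqref{sdviL^2-1}. The only points requiring a little care are the measurability and integrability of the integrand---guaranteed by the adaptedness of $x_h$ and $u_h$ together with the Lipschitz continuity and linear growth of $F$ (Assumption \ref{a-e} and Remark \ref{lgc})---and the uniform-in-$v$ choice of null set noted above. Neither presents a genuine obstacle, so the lemma follows directly; the result is essentially a bookkeeping specialization of Lemma \ref{lm-conversion} to the frozen-control setting of the scheme.
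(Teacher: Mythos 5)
Your main argument---freeze the control on $[t_i,t_{i+1})$, use the pointwise SVI built into the scheme \eqref{S1-e} to make the integrand nonnegative at a.e.\ $(s,\omega)$, then integrate and take expectations---is the same route the paper takes: its proof likewise reduces the claim to the pointwise-to-integral direction of Lemma \ref{lm-conversion}, applied with $F(t,\omega,x(t,\omega),u(t,\omega))$ replaced by the frozen map $F(t_i,\omega,x_h(t_i,\omega),u_h(t,\omega))$. Your inline handling of the null-set issue (countable dense subset of $K$ plus continuity in the test vector) is fine and, if anything, more careful than the paper's bare citation of Lemma \ref{lm-conversion}; note only that the continuity you need is of the affine map $v\mapsto\langle F(t_i,\omega,x_h(t_i,\omega),u_h(t_i,\omega)),v-u_h(t_i,\omega)\rangle$, which is automatic and has nothing to do with condition $(iii)$.

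There is, however, a genuine gap: you assume the scheme hands you an adapted, square-integrable $u_h$, writing that measurability and integrability are ``guaranteed by the adaptedness of $x_h$ and $u_h$''. The measurability (and square-integrability) of $u_h(t_i)$ is precisely what must be proved: the scheme only requires $u_h(t_i,\omega)$ to solve a pointwise VI for a.s.\ $\omega$, and a priori this need not produce a measurable, let alone $L^2$, selection $\omega\mapsto u_h(t_i,\omega)$; without that, the expectation in \eqref{sdviL^2-1} is not even well defined. The paper devotes the first half of its proof to exactly this point: it defines $\bar F(x,u)(\omega):=F(t,\omega,x(\omega),u(\omega))$ on $L^2(\Omega,\mathbb{R}^n)\times U$ and invokes Lemma \ref{lm-ex-u}, under conditions $(iii)$ and $(iv)$ of Assumption \ref{a-e}, to obtain a unique solution $u_h(t_i)\in U\subset L^2(\Omega,\mathbb{R}^m)$ of the $L^2(\Omega)$-level variational inequality, so that $K$-valuedness, measurability and square-integrability come for free from the construction; the resulting representation $u_h(t_i)=P_U[u_h(t_i)-\rho\bar F(x_h(t_i),u_h(t_i))]$ is moreover reused downstream in Lemma \ref{lm-ubd-1}. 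Your proof becomes complete once you add this step, or an equivalent measurable-selection argument exploiting the pointwise uniqueness furnished by the strong monotonicity condition $(iv)$.
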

\begin{proof}
First, for fixed $t\in[0,T]$, define a mapping
$
\bar{F}:L^2(\Omega,\mathbb{R}^n) \times U \to L^2(\Omega,\mathbb{R}^m)	
$
by
$$
\bar{F}(x,u)(\omega):=F(t,\omega,x(\omega),u(\omega)), \; \forall (x,u) \in L^2(\Omega,\mathbb{R}^n) \times U,\;\forall \omega \in \Omega.
$$
Then, for each $i=0,1,\cdots,N$, by the conditions $(iii)$ and $(iv)$ in Assumption \ref{a-e}, Lemma \ref{lm-ex-u} implies that there is a unique $u_h(t_i)$ satisfying
$$
\left\langle \bar{F}(x_h(t_i),u_h(t_i)),v-u_h(t_{i}) \right\rangle_{L^2(\Omega)} \geq 0, \; \forall v\in U.
$$
Let $u_h(t)=u_h(t_{i})$ for all $t\in [t_{i},t_{i+1})$ in \eqref{S1-e} with $i=0,1,\cdots,N$. Thus, by the definition of $U[a,b]$, we know that $u_h(s)\in U[t_i,t]$ for all $t\in [t_i,t_{i+1})$. Clearly, \eqref{S1-e} implies that
\begin{align*}
	\left\langle F(t_i,\omega,x_h(t_i,\omega),u_h(t,\omega)),v-u_h(t,\omega) \right\rangle \geq 0, \; \forall v\in K, \forall t\in [t_i,t_{i+1}), \; a.s. \; \omega\in \Omega.
\end{align*}
For any $t\in [t_i,t_{i+1})$, by taking $F(t,\omega,x(t,\omega),u(t,\omega))=F(t_i,\omega,x_h(t_i,\omega),u_h(t,\omega)))$ in Lemma \ref{lm-conversion}, we know that,
\begin{align*}
\mathbb{E}\int_{t_{i}}^t\left\langle F(t_i,\omega,x_h(t_i,\omega),u_h(s,\omega)),v(s,\omega)-u_h(s,\omega) \right\rangle ds \geq 0
\end{align*}
for all $v \in U[t_i,t]$ with $t \in[t_i,t_{i+1})$.
 This ends the proof.
\end{proof}

\begin{lm}\label{lm-ubd-1}
 Let $K\subset \mathbb{R}^n$ be closed convex. Under Assumption \ref{a-e}, there exists a constant $M \geq 0$ such that, for $i=1,\cdots,N$,
\begin{align*}
\|u_h(t_{i})\|_{L^2(\Omega)}\leq M (1+\|x_h(t_{i})  \|_{L^2(\Omega)}).
\end{align*}
\end{lm}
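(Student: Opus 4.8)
The plan is to exploit the variational characterization of $u_h(t_i)$ established in the proof of Lemma \ref{lm-h-e}, namely that $u_h(t_i)\in U$ satisfies $\langle \bar{F}(x_h(t_i),u_h(t_i)),v-u_h(t_i)\rangle_{L^2(\Omega)}\geq 0$ for all $v\in U$, and then to run the classical coercivity argument for strongly monotone variational inequalities in the Hilbert space $L^2(\Omega,\mathbb{R}^m)$. Since $K$ is nonempty, closed and convex, I would fix a single point $\hat{u}\in K$ and regard it as a constant (hence $\mathcal{F}$-measurable and square-integrable) random variable, so that $\hat{u}\in U$. Testing the inequality above with the admissible element $v=\hat{u}$ yields $\langle \bar{F}(x_h(t_i),u_h(t_i)),u_h(t_i)-\hat{u}\rangle_{L^2(\Omega)}\leq 0$.

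Next I would integrate the pointwise strong monotonicity condition $(iv)$ of Assumption \ref{a-e} over $\Omega$ with the choices $u_1=u_h(t_i)$, $u_2=\hat{u}$ and $x=x_h(t_i)$, obtaining $C\|u_h(t_i)-\hat{u}\|^2_{L^2(\Omega)}\leq \langle \bar{F}(x_h(t_i),u_h(t_i))-\bar{F}(x_h(t_i),\hat{u}),u_h(t_i)-\hat{u}\rangle_{L^2(\Omega)}$. Dropping the term $\langle \bar{F}(x_h(t_i),u_h(t_i)),u_h(t_i)-\hat{u}\rangle_{L^2(\Omega)}\leq 0$ supplied by the variational inequality, and applying Cauchy--Schwarz to what remains, leaves the cleaner estimate $C\|u_h(t_i)-\hat{u}\|_{L^2(\Omega)}\leq \|\bar{F}(x_h(t_i),\hat{u})\|_{L^2(\Omega)}$ (the case $u_h(t_i)=\hat{u}$ being trivial).

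It then remains to bound $\|\bar{F}(x_h(t_i),\hat{u})\|_{L^2(\Omega)}$ linearly in $\|x_h(t_i)\|_{L^2(\Omega)}$. Using the Lipschitz condition $(iii)$ in the $x$-variable I would write, pointwise in $\omega$, $\|F(t_i,\omega,x_h(t_i)(\omega),\hat{u})\|\leq \|F(t_i,\omega,0,\hat{u})\|+L_F\|x_h(t_i)(\omega)\|$, and take $L^2(\Omega)$ norms to get $\|\bar{F}(x_h(t_i),\hat{u})\|_{L^2(\Omega)}\leq \|F(t_i,\cdot,0,\hat{u})\|_{L^2(\Omega)}+L_F\|x_h(t_i)\|_{L^2(\Omega)}$. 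Combining the two inequalities and inserting $\|\hat{u}\|$ through the triangle inequality produces $\|u_h(t_i)\|_{L^2(\Omega)}\leq M(1+\|x_h(t_i)\|_{L^2(\Omega)})$ with $M$ assembled from $C$, $L_F$, $\|\hat{u}\|$ and the constant term.

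The main obstacle is to guarantee that the constant term $\|F(t_i,\cdot,0,\hat{u})\|_{L^2(\Omega)}$ is finite and, preferably, bounded uniformly in $i$ (and hence in $h$), so that $M$ does not degenerate as $N\to\infty$. I would settle this via the Lipschitz continuity in $t$ from $(iii)$, which gives $\|F(t_i,\cdot,0,\hat{u})\|_{L^2(\Omega)}\leq \|F(0,\cdot,0,\hat{u})\|_{L^2(\Omega)}+L_F T$, the term $\|F(0,\cdot,0,\hat{u})\|_{L^2(\Omega)}$ being finite precisely because $\bar{F}$ maps into $L^2(\Omega,\mathbb{R}^m)$. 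Setting $D=\|F(0,\cdot,0,\hat{u})\|_{L^2(\Omega)}+L_F T$ and $M=\max\{(D/C)+\|\hat{u}\|,\,L_F/C\}$ then yields a single constant that works for all $i=1,\dots,N$, completing the argument.
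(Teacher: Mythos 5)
Your proof is correct, but it follows a genuinely different route from the paper's. The paper starts from the fixed-point characterization $u_h(t_i)=P_{U}[u_h(t_i)-\rho \bar{F}(x_h(t_i),u_h(t_i))]$ and compares $u_h(t_i)$ with $u_h(t_0)$: using the nonexpansiveness of $P_U$ together with the contraction factor $\sqrt{1-2\rho C+\rho^2L_F^2}$ (for $0<\rho<2C/L_F^2$), it first derives the stability estimate $\|u_h(t_i)-u_h(t_0)\|_{L^2(\Omega)}\leq M\bigl(ih+\|x_h(t_i)-x_h(t_0)\|_{L^2(\Omega)}\bigr)$, and then absorbs the fixed quantities $\|u_h(t_0)\|_{L^2(\Omega)}$, $\|x_0\|_{L^2(\Omega)}$ and $ih\leq T$ into the constant. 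You instead run the classical anchor-point coercivity argument for strongly monotone variational inequalities: test the $L^2(\Omega)$ inequality with a constant $\hat{u}\in K$, apply the integrated strong monotonicity $(iv)$ and Cauchy--Schwarz to get $C\|u_h(t_i)-\hat{u}\|_{L^2(\Omega)}\leq \|\bar{F}(x_h(t_i),\hat{u})\|_{L^2(\Omega)}$, and bound the right-hand side via the Lipschitz condition $(iii)$. Your version is more elementary and self-contained: it bypasses the projection operator, the parameter $\rho$, and the contraction machinery entirely, and it produces explicit constants. Its only delicate point — finiteness of $\|F(0,\cdot,0,\hat{u})\|_{L^2(\Omega)}$ — you identify and resolve correctly; it is guaranteed by the paper's standing convention that $\bar{F}$ maps $L^2(\Omega,\mathbb{R}^n)\times U$ into $L^2(\Omega,\mathbb{R}^m)$ (the paper's own proof makes the parallel implicit assumption that $u_h(t_0)$ and $x_0$ have finite $L^2$ norms). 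What the paper's longer route buys is the intermediate estimate comparing $u_h(t_i)$ to $u_h(t_0)$, a Lipschitz-type bound in $ih$ and $\|x_h(t_i)-x_h(t_0)\|_{L^2(\Omega)}$ that mirrors the estimates used later in Theorem \ref{th-1}, whereas your argument delivers only (and exactly) what the lemma states.
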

\begin{proof}
By Lemmas \ref{ccs}, \ref{lm-p} and \ref{lm-h-e}, there exists a unique $u_h(t_n) \in U$ satisfying
\begin{align}\label{ubd-u-1}
u_h(t_i)=P_{U}[u_h(t_i)-\rho \bar{F}(x_h(t_i),u_h(t_i))],
\end{align}
where $0 < \rho < \frac{2C}{L^2_F}$. It follows from \eqref{ubd-u-1} and the conditions (iii) and (iv) in Assumption \ref{a-e} that
\begin{align}\label{ubd-u-2}
&\quad \;\mathbb{E}\|u_h(t_{i})-u_h(t_0)+\rho F(t_0,\omega,x_h(t_{0}),u_h(t_{0}))-\rho F(t_0,\omega,x_h(t_{0}),u_h(t_{i}))\|^2\\
&\leq \mathbb{E}\|u_h(t_{i})-u_h(t_0)\|^2+\rho^2\mathbb{E}\|\bar{F}(x_h(t_{0}),u_h(t_{0}))-\rho \bar{F}(x_h(t_{0}),u_h(t_{i}))\|^2 \nonumber\\
&\mbox{} \quad -2\rho\mathbb{E}\Big[ \langle u_h(t_{i})-u_h(t_0), F(t_{0},\omega,x_h(t_{0}),u_h(t_{i}))-F(t_{0},\omega,x_h(t_{0}),u_h(t_{0}))\rangle\Big]\nonumber\\
&\leq (1-2\rho C+\rho^2L^2_F)\mathbb{E}\|u_h(t_{i})-u_h(t_0)\|^2.\nonumber
\end{align}
Applying \eqref{ubd-u-2} and the condition $(iii)$ in Assumption \ref{a-e}, we obtain
\begin{align*}
	&\mbox{}\quad \; \|u_h(t_{i})-u_h(t_0)\|_{L^2(\Omega)}\nonumber\\
	&=\left[\mathbb{E}\|P_U[u_h(t_{i})-\rho \bar{F}(x(t_{i}),u(t_{i}))]-P_U[u(t_0)-\rho \bar{F}(x_h(t_0),u_h(t_0))]\|^2\right]^{\frac{1}{2}}\nonumber\\
	&\leq \left[ \mathbb{E}\|u_h(t_{i})-u_h(t_0)+\rho F(t_0,\omega,x_h(t_0),u_h(t_0))-\rho F(t_i,\omega,x(t_{i}),u(t_{i}))\|^2\right]^{\frac{1}{2}}\nonumber\\
	&=\Big[\mathbb{E}\|u_h(t_{i})-u_h(t_0)+\rho F(t_0,\omega,x_h(t_0),u_h(t_0))-\rho F(t_0,\omega,x_h(t_{0}),u_h(t_{i}))\nonumber\\
	&\mbox{}\quad +\rho F(t_0,\omega,x_h(t_{0}),u_h(t_{i}))-\rho F(t_i,\omega,x_h(t_{n}),u_h(t_{i}))\|^2\Big]^{\frac{1}{2}}\nonumber\\
	&\leq \left[\mathbb{E}\|u_h(t_{i})-u_h(t_0)+\rho F(t_0,\omega,x_h(t_0),u_h(t_0))-\rho F(t_{0},\omega,x(t_{0}),u(t_{i}))\|^2\right]^{\frac{1}{2}}\nonumber\\
	&\mbox{}\quad+ \left[\mathbb{E}\|\rho F(t_{0},\omega,x(t_{0}),u(t_{i}))-\rho F(t_{i},\omega,x(t_{i}),u(t_{i}))\|^2\right]^{\frac{1}{2}}\nonumber\\
	&\leq \sqrt{(1-2\rho C+\rho^2L^2_F)}\|u_h(t_{i})-u_h(t_0)\|_{L^2(\Omega)}\\
	&\mbox{}\quad +\sqrt{2}\rho L_F ih+ \sqrt{2}\rho L_F \|x_h(t_{i})-x_h(t_0)\|_{L^2(\Omega)},
\end{align*}
which implies that
\begin{align}\label{ubd-u-3}
\|u_h(t_{i})-u_h(t_0)\|_{L^2(\Omega)}\leq M(ih+\|x_h(t_{i})-x_h(t_0)  \|_{L^2(\Omega)}),
\end{align}
where $M=\frac{\sqrt{2} \rho L_F}{1-\sqrt{(1-2\rho C+\rho^2L^2_F)}}$ and $u_h(t_0)=P_{U}[u_h(t_0)-\rho \bar{F}(x_h(t_0),u_h(t_0))]$.
The inequality \eqref{ubd-u-3} shows that there exists a constant $M>0$ such that
\begin{align}\label{ubd-u-4}
\|u_h(t_{i})\|_{L^2(\Omega)}\leq M (1+\|x_h(t_{i})  \|_{L^2(\Omega)}).
\end{align}
This ends the proof.
\end{proof}

At the end of this section, we show our main results as follows.

\begin{theorem}\label{th-1}
Let $K\subset \mathbb{R}^n$ be closed convex. Under Assumptions \ref{a-e}, one has
$$\|x_h-x\|_{H_{[0,T]}} \rightarrow 0, \quad \|u_h-u\|_{H_{[0,T]}} \rightarrow 0 \quad \mbox{as} \quad h\rightarrow 0,$$
where $(x_h(t),u_h(t))$ is given by \eqref{S1-e} and $(x(t),u(t))$ is a unique solution of SDVI \eqref{S1}. Moreover, there exists a constant $C>0$ such that
\begin{align*}
\|x_h-x\|^2_{H_{[0,T]}} \leq C[h^2+h], \quad
\|u-u_h\|_{H_{[0,T]}}^2\leq C[h^2+h].
\end{align*}
\end{theorem}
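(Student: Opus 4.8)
The plan is to compare the continuous Carath\'{e}odory solution $(x,u)$ of \eqref{S1}, whose existence and uniqueness is guaranteed by Theorem~\ref{thm-e}, with the Euler iterate $(x_h,u_h)$ of \eqref{S1-e} after recasting the latter in integral form. Introducing the left-endpoint step function $\tau(s):=t_i$ for $s\in[t_i,t_{i+1})$, the scheme \eqref{S1-e} is equivalent to
\begin{align*}
x_h(t)&=x_0+\int_0^t f(\tau(s),x_h(\tau(s)),u_h(\tau(s)))\,ds+\int_0^t g(\tau(s),x_h(\tau(s)),u_h(\tau(s)))\,dB_s,\\
x(t)&=x_0+\int_0^t f(s,x(s),u(s))\,ds+\int_0^t g(s,x(s),u(s))\,dB_s.
\end{align*}
Subtracting, squaring, taking expectation, and splitting the drift and diffusion parts via the elementary inequality $(a+b)^2\le 2a^2+2b^2$, the Cauchy--Schwarz inequality on the $ds$-integral, and the It\^{o} isometry on the $dB_s$-integral, I reduce the whole problem to bounding the integrands $\mathbb{E}\|f(s,x(s),u(s))-f(\tau(s),x_h(\tau(s)),u_h(\tau(s)))\|^2$ and its $g$-analogue.

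The key auxiliary estimate is a stability bound for the variational inequality. Since $u(s)$ solves the VI with operator $F(s,\omega,x(s),\cdot)$ and $u_h(\tau(s))$ solves the VI with operator $F(\tau(s),\omega,x_h(\tau(s)),\cdot)$ on the same set $K$ (by Lemma~\ref{lm-h-e}), testing each inequality against the other solution, adding, and then invoking the strong monotonicity $(iv)$ and the Lipschitz property $(iii)$ of Assumption~\ref{a-e} yields the pointwise bound
\begin{align*}
\|u(s)-u_h(\tau(s))\|\le \tfrac{L_F}{C}\big(|s-\tau(s)|+\|x(s)-x_h(\tau(s))\|\big)\quad a.s.
\end{align*}
This controls the control error by the time-freezing defect $|s-\tau(s)|\le h$ and the state error. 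Combining it with the Lipschitz bounds $(i)$--$(ii)$, every integrand is dominated by a constant depending on $L_f,L_g,L_F,C$ times $h+\|x(s)-x_h(\tau(s))\|$, and I then split $\|x(s)-x_h(\tau(s))\|\le\|x(s)-x(\tau(s))\|+\|x(\tau(s))-x_h(\tau(s))\|$.

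Two regularity facts feed into the estimate. Using the linear growth bounds of Remark~\ref{lgc} together with the control bound of Lemma~\ref{lm-ubd-1} and an a priori $L^2$-moment bound, the one-step continuity moduli satisfy $\mathbb{E}\|x(s)-x(\tau(s))\|^2\le Ch$ and $\mathbb{E}\|x_h(s)-x_h(\tau(s))\|^2\le Ch$, since the interval length is at most $h$ and the It\^{o} isometry turns the diffusion increment over it into an $O(h)$ term. Feeding everything back, I obtain a Gronwall-type inequality
\begin{align*}
\mathbb{E}\|x(t)-x_h(t)\|^2\le C(h^2+h)+C'\int_0^t \mathbb{E}\|x(s)-x_h(s)\|^2\,ds,
\end{align*}
to which I apply Gronwall's inequality, or, after evaluating at the grid points $t_n$ and summing, the discrete generalized version in Lemma~\ref{gGi}, to deduce $\sup_{t}\mathbb{E}\|x(t)-x_h(t)\|^2\le C(h^2+h)$ and hence $\|x_h-x\|^2_{H_{[0,T]}}\le C(h^2+h)$. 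Finally, integrating the VI stability estimate over $[0,T]\times\Omega$ transfers the bound to $\|u-u_h\|^2_{H_{[0,T]}}\le C(h^2+h)$, and both right-hand sides vanish as $h\to 0$.

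The hard part will be the rigorous handling of the piecewise-constant control $u_h(\tau(s))$: one must secure the VI-comparison estimate in a form that is uniform in $s$ and integrable, exploiting that both $u(s)$ and $u_h(\tau(s))$ take values in the same fixed $K$ so that the projections compare cleanly, and one must establish the uniform-in-$h$ a priori moment bounds $\sup_i\mathbb{E}\|x_h(t_i)\|^2\le C$ that the continuity moduli rely on. The latter requires combining the linear growth of $f,g$ with Lemma~\ref{lm-ubd-1} in a discrete Gronwall recursion before the main error analysis can proceed; once these two ingredients are in place, the remaining estimates are routine applications of the It\^{o} isometry and the Lipschitz hypotheses.
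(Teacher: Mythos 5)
Your proposal is correct and follows the same overall architecture as the paper's proof (integral-form comparison with the frozen-time function $\hat{s}=\tau(s)$, drift/diffusion splitting via Cauchy--Schwarz and the It\^{o} isometry, a control-error bound in terms of the state error plus an $O(h)$ defect, a one-step continuity modulus, and a final Gronwall argument yielding the rate $C[h^2+h]$), but it differs genuinely in the key stability lemma for the variational inequality. The paper obtains the control-error estimate through the projection machinery: it characterizes both $u$ and $u_h$ as fixed points $u=P_{U}[u-\rho\tilde{F}(x,u)]$ in the function spaces $L^2_{ad}$ (Lemmas \ref{lm-p}, \ref{lm-conversion}, \ref{lm-h-e}), and then uses nonexpansiveness of $P_U$ together with the contraction factor $\sqrt{1-2\rho C+\rho^2L_F^2}$, producing the integrated bound with constant $A_F=\sqrt{2}\rho L_F/(1-\sqrt{1-2\rho C+\rho^2L_F^2})$. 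You instead cross-test the two \emph{pointwise} VIs over the same set $K$ (plug each solution into the other's inequality and add), and invoke strong monotonicity $(iv)$ and Lipschitzness $(iii)$ of Assumption \ref{a-e} directly, getting the a.s.\ pointwise bound $\|u(s)-u_h(\tau(s))\|\le (L_F/C)(|s-\tau(s)|+\|x(s)-x_h(\tau(s))\|)$. Your route is more elementary: it avoids the auxiliary parameter $\rho$ and the Hilbert-space projection lemmas entirely, and it gives a cleaner constant; it is available precisely because Assumption \ref{a-e} is stated pointwise in $(\omega,t,x,u)$, whereas the integrated Assumption \ref{a-e-1} of Section 3 would not support it. One secondary difference carries a small extra cost on your side: you split $\|x(s)-x_h(\tau(s))\|\le\|x(s)-x(\tau(s))\|+\|x(\tau(s))-x_h(\tau(s))\|$, which requires a continuity modulus and hence a priori moment bounds for the \emph{exact} solution $(x,u)$ (you correctly flag this; it follows from the same pointwise VI argument giving $\|u(t)\|\lesssim 1+\|x(t)\|$ plus a standard SDE Gronwall bound), while the paper splits as $\|x(s)-x_h(s)\|+\|x_h(s)-x_h(\hat{s})\|$ and therefore only needs the modulus of the Euler iterate, which it controls by the discrete a priori bound \eqref{x_h-c} via Lemma \ref{gGi}. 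Both routes close the Gronwall loop and deliver the same convergence rate.
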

\begin{proof}
We first show the measurability of $x_h(t)$ and $u_h(t)$. In fact, define $\hat{s}=\hat{s}(s)$ by setting
\begin{equation}\label{s^}
\hat{s}(s)=\left\{
\begin{array}{ll}
t_i, & \quad s\in [t_i,t_{i+1});\\
0, &  \quad \mbox{else}.
\end{array}
\right.
\end{equation}
Then, it follows from \eqref{S1-e} and \eqref{s^} that
\begin{equation}\label{sdvi-h-s^}
\left\{
\begin{aligned}
&x_h(t)=x_0+\int_0^tf(\hat{s},x_h(\hat{s}),u_h(\hat{s}))ds+\int_0^tg(\hat{s},x_h(\hat{s}),u_h(\hat{s}))dB_s,\\
&\left\langle F(\hat{s},\omega,x_h(\hat{s}),u_h(\hat{s})),v-u_h(\hat{s}) \right\rangle \geq 0, \quad \forall v\in K, \; a.s. \; \omega\in \Omega.
\end{aligned}
\right.
\end{equation}
Similar to the proof of Theorem \ref{thm-e}, we know that $x_h(t)$ is $\mathcal{F}_t$-measurable and $\mathbb{E}\int_0^T(x_h(t))^2dt <\infty$, which implies that $x_h(t) \in L^2_{ad}([0,T]\times \Omega,\mathbb{R}^n)$.
Define a mapping
$
\tilde{F}_i:L^2_{ad}([t_i,t_{i+1}]\times \Omega,\mathbb{R}^n) \times U[t_i,t_{i+1}] \to L^2_{ad}([t_i,t_{i+1}]\times \Omega,\mathbb{R}^m)
$
by
$$
	\tilde{F}_i(x,u)(s,\omega):=F(s,\omega,x(s,\omega),u(s,\omega))
$$
for all $(x,u)\in L^2_{ad}([t_i,t_{i+1}]\times \Omega,\mathbb{R}^n) \times U[t_i,t_{i+1}]$, for all $s\in[t_i,t_{i+1})$, and for all $\omega \in \Omega$. Then, by \eqref{S1-e} Lemmas \ref{lm-p} and \ref{lm-h-e}, we have
\begin{align}\label{uh-p}
u_h(\hat{s})=P_{U[t_i,t_{i+1}]}[u_h(\hat{s})-\rho \tilde{F}_n(x_h(\hat{s}), u_h(\hat{s}))],
\end{align}
where $0 < \rho < \frac{2C}{L^2_F}$ is a constant. Thus, \eqref{uh-p} and \eqref{S1-e} imply that $u_h(t)$ is $\mathcal{F}_{t_i}$-measurable for all $t\in [t_i,t_{i+1})$ and $\mathbb{E}\int_0^T\|u_h(t)\|^2dt <\infty$.

Next we consider $\mathbb{E}\|x_h(t)-x(t)\|^2$. To this end, by employing \eqref{S1-e}, one has
\begin{align*}
	&x_h(t_i)=x_0+\sum_{k=1}^{i-1}f(t_{k-1},x_h(t_{k-1}),u_h(t_{k-1}))(t_k-t_{k-1})\\
	&\qquad \qquad  \mbox{}+\sum_{k=1}^{i-1}g(t_{k-1},x_h(t_{k-1}),u_h(t_{k-1}))(B_{t_k}-B_{t_{k-1}}),
\end{align*}
which implies that
\begin{align}\label{sdf-1}
\mathbb{E}\|x_h(t_i)\|^2
&\leq 3\mathbb{E}\|x_0\|^2
+3\mathbb{E}\left[\sum_{k=1}^{i-1}f(t_{k-1},x_h(t_{k-1}),u_h(t_{k-1}))(t_k-t_{k-1})\right]^2\\
&\mbox{}\quad+3\mathbb{E}\left[\sum_{k=1}^{i-1}g(t_{k-1},x_h(t_{k-1}),u_h(t_{k-1}))(B_{t_k}-B_{t_{k-1}})\right]^2\nonumber\\
&\leq 3\mathbb{E}(x_0)^2+3H_1+3H_2,\nonumber
\end{align}
where
$$H_1=h^2\mathbb{E}\left[\sum\limits_{k=1}^{i-1}f(t_{k-1},x_h(t_{k-1}),u_h(t_{k-1}))\right]^2,$$ $$H_2=h\sum\limits_{k=1}^{i-1}\mathbb{E}\|g(t_{k-1},x_h(t_{k-1}),u_h(t_{k-1}))\|_{\mathbb{R}^{n\times m}}^2.$$
By Remark \ref{lgc}, there exist two constants $K_1,K_2>0$ such that, for all $(t,\omega) \in [0,T]\times \Omega$,
$$\|f(t,x,u)\|\le K_1(1+\|x\|+\|u\|), \quad \|g(t,x,u)\|_{\mathbb{R}^{n\times m}} \le K_2(1+\|x\|+\|u\|).$$
Thus, we have
\begin{align}\label{sdf-1-f}
H_1 \leq & h^2K_1^2\mathbb{E}\left[\sum_{k=1}^{i-1}(1+\|x_h(t_{k-1})\|+\|u_h(t_{k-1})\|)\right]^2\\
\leq
&h^2K_1^2N \sum_{k=1}^{i-1}\mathbb{E}\left[1+\|x_h(t_{k-1})\|+\|u_h(t_{k-1})\|\right]^2\nonumber\\
\leq
& 3ThK_1^2 \sum_{k=1}^{i-1}\mathbb{E}[1+\|x_h(t_{k-1})\|^2+\|u_h(t_{k-1})\|^2]\nonumber\\
\leq
& 3ThK_1^2 +3ThK_1^2\sum_{k=1}^{i-1}\mathbb{E}\|x_h(t_{k-1})\|^2+3ThK_1^2\sum_{k=1}^{i-1}\mathbb{E}\|u_h(t_{k-1})\|^2\nonumber
\end{align}
and
\begin{align}\label{sdf-1-g}
H_2&\leq 3hK_2^2 \sum_{k=1}^{i-1}\mathbb{E}[1+\|x_h(t_{k-1})\|^2+\|u_h(t_{k-1})\|^2]\\
&\leq 3hK_2^2+3hK_2^2\sum_{k=1}^{i-1}\mathbb{E}\|x_h(t_{k-1})\|^2+3hK_2^2\sum_{k=1}^{i-1}\mathbb{E}\|u_h(t_{k-1})\|^2.\nonumber
\end{align}
Combining inequalities \eqref{sdf-1} to \eqref{sdf-1-g}, we obtain
\begin{align}\label{sdf-2}
\mathbb{E}\|x_h(t_i)\|^2
\leq 3\mathbb{E}\|x_0\|^2+Ch\left[1+\sum_{k=1}^{i-1}\mathbb{E}\|x_h(t_{k-1})\|^2+\sum_{k=1}^{i-1}\mathbb{E}\|u_h(t_{k-1})\|^2\right],
\end{align}
where $C=9TK_1^2+9K_2^2$. By Lemma \ref{lm-ubd-1}, there exists a constant $M>0$ such that $\mathbb{E}\|u_h(t_i)\|^2 \leq M(1+\mathbb{E}\|x_h(t_i)\|^2)$ and so
 \eqref{sdf-2} implies that
\begin{align}\label{sdf-3}
\mathbb{E}\|x_h(t_i)\|^2
\leq 3\mathbb{E}\|x_0\|^2+CT+C'h\sum_{k=1}^{i-1}\mathbb{E}\|x_h(t_{k-1})\|^2,
\end{align}
where $C'=CM+C$. Clearly, $h\leq \frac{T}{N^{\frac{1}{2}}i^{\frac{1}{2}}}$.  Now applying \eqref{sdf-3} and Lemma \ref{gGi}, one has
\begin{align}\label{x_h-c}
\mathbb{E}\|x_h(t_i)\|^2
\leq 3\mathbb{E}\|x_0\|^2+CT+\frac{C'T}{N^{\frac{1}{2}}}\sum_{k=1}^{i-1}\frac{\mathbb{E}\|x_h(t_{k-1})\|^2}{(i-k)^{\frac{1}{2}}}
\leq A,
\end{align}
where $A=(3\mathbb{E}(x_0)^2+CT)[1+\mathbb{G}_{\frac{1}{2}}(C'T\Gamma(\frac{1}{2}))]$.  Moreover, for any fixed $t\in [t_{i-1},t_i)$, it follows from \eqref{x_h-c} and Assumption \ref{a-e} that
\begin{align}\label{x-h-i}
&\mathbb{E}\|x_h(t)-x_h(t_{i-1})\|^2\\
=&\mathbb{E}\left[\int_{t_{i-1}}^tf(t_{i-1},x_h(t_{i-1}),u_h(t_{i-1}))ds+\int_{t_{i-1}}^tg(t_{i-1},x_h(t_{i-1}),u_h(t_{i-1}))dB_s\right]^2\nonumber\\
\leq& 2\mathbb{E}\left[\int_{t_{i-1}}^tf(t_{i-1},x_h(t_{i-1}),u_h(t_{i-1}))ds\right]^2\nonumber\\
&+2\mathbb{E}\left[\int_{t_{i-1}}^tg(t_{i-1},x_h(t_{i-1}),u_h(t_{i-1}))dB_s\right]^2\nonumber\\
=&2\mathbb{E}[\|f(t_{i-1},x_h(t_{i-1}),u_h(t_{i-1})\|^2(t-t_{i-1})^2]\nonumber\\
&+2\mathbb{E}[\|g(t_{i-1},x_h(t_{i-1}),u_h(t_{i-1})\|_{\mathbb{R}^{n\times m}}^2(t-t_{i-1})]\nonumber\\
\leq
&\mathbb{E}[6K_1^2(1+\|x_h(t_{i-1})\|^2+\|u_h(t_{i-1})\|^2)(t-t_{i-1})^2]\nonumber\\
&+\mathbb{E}[6K_2^2(1+\|x_h(t_{i-1})\|^2+\|u_h(t_{i-1})\|^2)(t-t_{i-1})]\nonumber\\
\leq & B_1h^2+B_2h,\nonumber
 \end{align}
where $B_1=6K_1^2(1+A+M+AM)$ and $B_2=6K_2^2(1+A+M+AM)$. Thus, by \eqref{sdvi-h-s^}, we have
\begin{align}\label{x_h-x_t}
\mathbb{E}\|x_h(t)-x(t)\|^2 \leq I_1+I_2,
\end{align}
where
\begin{equation}
\left\{
\begin{aligned}
&I_1=2\mathbb{E}\left[\int_0^t(f(s,x(s),u(s))-f(\hat{s},x_h(\hat{s}),u_h(\hat{s})))ds\right]^2,\nonumber\\
&I_2=2\mathbb{E}\left[\int_0^t(g(s,x(s),u(s))-g(\hat{s},x_h(\hat{s}),u_h(\hat{s})))dB_s\right]^2.\nonumber
\end{aligned}
\right.
\end{equation}
From \eqref{x-h-i}, Cauchy-Schwartz's inequality and Assumption \ref{a-e}, one has
\begin{align}\label{I1}
I_1&=2\mathbb{E}\left[\int_0^t(f(s,x(s),u(s))-f(\hat{s},x_h(\hat{s}),u_h(\hat{s})))ds\right]^2\\
&\leq 2T\mathbb{E}\left[\int_0^t\|f(s,x(s),u(s))-f(\hat{s},x_h(\hat{s}),u_h(\hat{s}))\|^2ds\right]\nonumber\\
&\leq 2T L_f^2 \mathbb{E}\left[\int_0^t|s-\hat{s}|+\|x(s)-x_h(\hat{s})\|+\|u(s)-u_h(\hat{s})\|\right]^2ds\nonumber\\
&\leq 6T L_f^2 \mathbb{E}\left[\int_0^t|s-\hat{s}|^2+\|x(s)-x_h(\hat{s})\|^2+\|u(s)-u_h(\hat{s})\|^2\right]ds\nonumber\\
&\leq 6T^2 L_f^2h^2+ 12T L_f^2 \mathbb{E}\left[\int_0^t\|x(s)-x_h(s)\|^2ds\right]\nonumber\\
&\mbox{}\quad +12T L_f^2 \mathbb{E}\left[\int_0^t\|x_h(s)-x_h(\hat{s})\|^2ds\right]+6T L_f^2\mathbb{E}\left[\int_0^t\|u(s)-u_h(\hat{s})\|^2ds\right]\nonumber
\end{align}
\begin{align*}
&\leq 6T^2 L_f^2h^2+12T^2 L_f^2B_1h^2+12T^2 L_f^2B_2h\nonumber\\
&\mbox \quad +6T L_f^2\mathbb{E}\left[\int_0^t\|u(s)-u_h(\hat{s})\|^2ds\right]++12T L_f^2 \mathbb{E}\left[\int_0^t\|x_h(s)-x_h(\hat{s})\|^2ds\right]\nonumber
\end{align*}
and
\begin{align}\label{I2}
I_2&=2\mathbb{E}\left[\int_0^t(g(s,x(s),u(s))-g(\hat{s},x_h(\hat{s}),u_h(\hat{s})))dB_s\right]^2\\
&= 2\mathbb{E}\left[\int_0^t\|g(s,x(s),u(s))-g(\hat{s},x_h(\hat{s}),u_h(\hat{s}))\|^2_{\mathbb{R}^{n\times m}}ds \right]\nonumber\\
&\leq 2L_g^2 \mathbb{E}\int_0^t\left[|s-\hat{s}|+\|x(s)-x_h(\hat{s})\|+\|u(s)-u_h(\hat{s})\|\right]^2ds\nonumber\\
&\leq 6L_g^2 \mathbb{E}\left[\int_0^t|s-\hat{s}|^2+\|x(s)-x_h(\hat{s})\|^2+\|u(s)-u_h(\hat{s})\|^2\right]ds\nonumber\\
&\leq 6TL_g^2h^2+12L_g^2\mathbb{E}\left[\int_0^t\|x(s)-x_h(s)\|^2ds\right]\nonumber\\
&\mbox \quad+12L_g^2\mathbb{E}\left[\int_0^t\|x_h(s)-x_h(\hat{s})\|^2ds\right]+6L_g^2\mathbb{E}\left[\int_0^t\|u(s)-u_h(\hat{s})\|^2ds\right]\nonumber\\
&\leq 6TL_g^2h^2+12TL_g^2B_1^2h^2+12TL_g^2B_2h+12L_g^2\mathbb{E}\left[\int_0^t\|x(s)-x_h(s)\|^2ds\right]\nonumber\\
&\mbox \quad+6L_g^2\mathbb{E}\left[\int_0^t\|u(s)-u_h(\hat{s})\|^2ds\right].\nonumber
\end{align}
Combining \eqref{x_h-x_t} to \eqref{I2}, we obtain
\begin{align}\label{x_h-x_t-1}
\mathbb{E}\|x_h(t)-x(t)\|^2 &\leq C_1h^2+C_2h+C_3\mathbb{E}\left[\int_0^t\|x(s)-x_h(s)\|^2ds\right] \\
&\quad \;+C_4\mathbb{E}\left[\int_0^t\|u(s)-u_h(\hat{s})\|^2ds\right],\nonumber
\end{align}
where $C_1=6T^2 L_f^2+6TL_g^2+12T^2 L_f^2B_1+12TL_g^2B_1^2$, $C_2=12T^2 L_f^2B_2+12TL_g^2B_2$, $C_3=12T L_f^2+12L_g^2$, and $C_4=6T L_f^2+6L_g^2$.

Finally, we estimate $\mathbb{E}\left[\int_0^t\|u(s)-u_h(\hat{s})\|^2ds\right]$. To do this, it follows from \eqref{uh-p} and Assumption \ref{a-e} that
\begin{align}\label{u_h-1}
&\mbox{}\quad \;\mathbb{E}\int_{t_{i-1}}^t \|u(s)-u(\hat{s})+\rho F(s,\omega,x(s),u_h(\hat{s}))-\rho F(s,\omega,x(s),u(s))\|^2ds\\
&\leq \mathbb{E}\int_{t_{i-1}}^t\|u(s)-u(\hat{s})\|^2ds\nonumber\\
&\mbox{}\quad+\rho^2\mathbb{E}\int_{t_{i-1}}^t\|F(s,\omega,x(s),u_h(\hat{s}))- F(s,\omega,x(s),u(s))\|^2ds\nonumber\\
&\mbox{}\quad-2\rho\mathbb{E}\left[\int_{t_{i-1}}^{t} \langle u(s)-u_h(\hat{s}),F(s,\omega,x(s),u(s))-F(s,\omega,x(s),u_h(\hat{s}))\rangle ds\right]\nonumber\\
&\leq (1-2\rho C+\rho^2L^2_F)\mathbb{E}\int_{t_{i-1}}^t\|u(t)-u_h(\hat{s})\|^2ds.\nonumber
\end{align}
Applying \eqref{u_h-1} and Assumption \ref{a-e}, we get
\begin{align*}
&\mbox{}\quad \;
\left[\mathbb{E}\int_{t_{i-1}}^t \|u(s)-u_h(\hat{s})\|^2ds\right]^\frac{1}{2}\nonumber\\
&=\Bigg[\mathbb{E}\int_{t_{i-1}}^t \|P_{U[t_{n-1},t]}(u-\rho \tilde{F}_n(x,u))\nonumber\\
&\mbox{}\quad-P_{U[t_{i-1},t]}(u(\hat{s})-\rho \tilde{F}_n(x_h(\hat{s}),u_h(\hat{s})))\|^2ds\Bigg]^{\frac{1}{2}}\nonumber\\
&\leq \Bigg[\mathbb{E}\int_{t_{i-1}}^t \|u(s)-u_h(\hat{s})+\rho F(\hat{s},\omega,x_h(\hat{s}),u_h(\hat{s}))-\rho F(s,\omega,x(s),u(s))\|^2ds\Bigg]^{\frac{1}{2}}\nonumber\\
&=\Bigg[\mathbb{E}\int_{t_{i-1}}^t\|u(s)-u_h(\hat{s})+\rho F(\hat{s},\omega,x_h(\hat{s}),u_h(\hat{s}))-\rho F(s,\omega,x(s),u_h(\hat{s}))\nonumber\\
&\mbox{}\quad +\rho F(s,\omega,x(s),u_h(\hat{s}))-\rho F(s,\omega,x(s),u(s))\|^2ds\Bigg]^{\frac{1}{2}}\nonumber\\
&\leq \left[\mathbb{E}\int_{t_{i-1}}^t\|u(s)-u_h(\hat{s})+\rho F(s,\omega,x(s),u_h(\hat{s}))-\rho F(s,\omega,x(s),u(s))\|^2ds\right]^{\frac{1}{2}}\nonumber\\
&\mbox{}\quad+\left[\mathbb{E}\int_{t_{i-1}}^t\|\rho F(\hat{s},\omega,x_h(\hat{s}),u_h(\hat{s}))-\rho F(s,\omega,x(s),u_h(\hat{s}))\|^2ds\right]^{\frac{1}{2}}
\end{align*}
\begin{align*}
&\leq \sqrt{(1-2\rho C+\rho^2L^2_F)}\mathbb{E}\left[\int_{t_{i-1}}^{t}\|u(s)-u_h(\hat{s})\|^2ds\right]^{\frac{1}{2}}+\sqrt{2}\rho L_F h^{\frac{3}{2}}\nonumber\\
&\mbox{}\quad + \sqrt{2}\rho L_F \left[\mathbb{E}\int_{t_{i-1}}^t\|x(s)-x_h(\hat{s})\|^2ds\right]^{\frac{1}{2}}
\end{align*}
and so
\begin{align*}
&\mathbb{E}\left[\int_{t_{i-1}}^{t}\|u(s)-u_h(\hat{s})\|^2ds\right]^{\frac{1}{2}}\leq A_F \mathbb{E}\left[\int_{t_{i-1}}^{t}\|x_h(\hat{s})-x(s)\|^2ds\right]^{\frac{1}{2}}+A_F h^{\frac{3}{2}},
\end{align*}
which implies
\begin{align*}
&\mathbb{E}\int_{t_{i-1}}^{t}\|u(s)-u_h(\hat{s})\|^2ds\leq 2A^2_F \mathbb{E}\int_{t_{i-1}}^{t}\|x_h(\hat{s})-x(s)\|^2ds+2A^2_F h^{3}.
\end{align*}
Then,
\begin{align}\label{u_t-u_h-1}
\mathbb{E}\int_0^t \|u(s)-u_h(\hat{s})\|^2ds&=\mathbb{E}\sum_{k=0}^{i-1}\int_{t_k}^{t_{k+1}}\|u(s)-u_h(\hat{s})\|^2ds \\
&\quad \;+\mathbb{E}\int_{t_{i-1}}^t\|u(s)-u_h(\hat{s})\|^2ds \nonumber\\
&\leq 2A^2_F \mathbb{E}\sum_{k=0}^{i-1}\int_{t_k}^{t_{k+1}}\|x_h(\hat{s})-x(s)\|^2ds\nonumber \\
&\quad \;+2A^2_F\mathbb{E}\int_{t_{i-1}}^t\|x_h(\hat{s})-x(s)\|^2ds +2iA^2_F h^{3} \nonumber\\
&\leq 2A^2_F \mathbb{E}\int_{0}^{t}\|x_h(\hat{s})-x(s)\|^2ds+2iA^2_F h^{3},\nonumber
\end{align}
where $A_F=\frac{\sqrt{2}\rho L_F}{1-\sqrt{(1-2\rho C+\rho^2L^2_F)}}$.
It follows from \eqref{x-h-i} and \eqref{u_t-u_h-1} that
\begin{align}\label{u_t-u_h}
&\mbox{}\quad \mathbb{E}\left[\int_0^t \|u(s)-u_h(\hat{s})\|^2ds\right]\\
&\leq 2A^2_F \mathbb{E}\left[\int_{0}^{t}\|x_h(\hat{s})-x(s)\|^2ds\right]+2iA^2_F h^{3}\nonumber\\
&\leq 2A^2_F \mathbb{E}\left[\int_{0}^{t}\|x_h(\hat{s})-x_h(s)+x_h(s)-x(s)\|^2ds\right]+2iA^2_F h^{3}\nonumber\\
&\leq 4A^2_FTB_1h^2+4A^2_FTB_2h+4A^2_F \mathbb{E}\left[\int_0^t\|x_h(s)-x(s)\|^2ds\right]+2iA^2_F h^{3}\nonumber
\end{align}
Combining \eqref{x_h-x_t-1} and \eqref{u_t-u_h}, one has
\begin{align}\label{xh-x-gi}
&\mathbb{E}\|x_h(t)-x(t)\|^2\leq C'_1h^2+C'_2h+C'_3\mathbb{E}\left[\int_0^t\|x(s)-x_h(s)\|^2ds\right],
\end{align}
where $C'_1=C_1+4C_4A^2_FTB_1+2A^2_FT$, $C'_2=C_2+4C_4A^2_FTB_2$, and $C'_3=C_3+4C_4A^2_F$.  Applying Gronwall's inequality in \eqref{xh-x-gi}, we get
\begin{align}\label{x_h-x_t-2}
&\mathbb{E}\|x_h(t)-x(t)\|^2\leq (C'_1h^2+C'_2h)+(C'_1h^2+C'_2h)(e^{C'_3t}-1).
\end{align}
Thus, it follows from \eqref{x_h-x_t-2} and \eqref{u_t-u_h} that
\begin{align*}
\|x_h-x\|^2_{H_{[0,T]}}&\leq (C'_1h^2+C'_2h)T+(C'_1h^2+C'_2h)T(e^{C'_3T}-1)\\
&\leq (C'_1T+C'_1T(e^{C'_3T}-1)) h^2 +(C'_2T+C'_2T(e^{C'_3T}-1))h
\end{align*}
and
\begin{eqnarray*}
&&\|u-u_h\|^2_{H_{[0,T]}}\\
&=&\mathbb{E}\int_0^T\|u(s)-u_h(s)\|^2ds\\
&=&\mathbb{E}\int_0^T\|u(s)-u_h(\hat{s})\|^2ds\\
&\leq& (4A^2_FTB_1+2A^2_FT)h^2+4A^2_FTB_2h +4A^2_F \|x_h(s)-x(s)\|_{H_{[0,T]}}^2\\
&\leq& (4A^2_FB_1+2A^2_F+4A^2_FC_1')h^2T+(4A^2_FB_2+4A^2_FC_2')Th\\
&&\;+4A^2_F(C'_1h^2+C'_2h)T(e^{C'_3T}-1)\\
&\leq&
(4A^2_FTB_1+2A^2_FT+4A^2_FC'_1T+4A^2_FC'_1T(e^{C'_3T}-1))h^2\\
&&\;+(4A^2_FTB_2+4A^2_FC'_2T+4A^2_FC'_2T(e^{C'_3T}-1))h.
\end{eqnarray*}
Thus, we know that there exists a constant $C>0$ such that
\begin{align*}
	\|x_h-x\|^2_{H_{[0,T]}} \leq C[h^2+h], \quad
	\|u-u_h\|_{H_{[0,T]}}^2\leq C[h^2+h].
\end{align*}
Clearly, $\|x_h-x\|_{H_{[0,T]}} \rightarrow 0$ and  $\|x_h-x\|_{H_{[0,T]}} \rightarrow 0$ as $h\rightarrow 0$.
\end{proof}

\section{Two applications}
In this section, we first propose Algorithm \ref{al} for solving SDVI \eqref{S1} and then give two applications of Algorithm \ref{al} to solve the electrical circuits with diodes and the collapse of the bridge problems in stochastic environment.

From \cite{Glasserman2003Monte}, by Monte Carlo method, we consider the following approximation of a standard $l$-dimensional Brownian motion $B_t$:
\begin{align}\label{aBm}
B_{t_{k+1}}-B_{t_{k}}=\sqrt{t_{k+1}-t_k}Z_{t_{k+1}},\quad k=0,1,\cdots,N-1,
\end{align}
where $\{Z_{t_{k+1}}\}_{k=0}^{N-1}$ are independent $l$-dimensional standard normal random variables. It is well known that
\begin{align*}
&\mathbb{E}(B_{t_{k+1}}-B_{t_{k}})=\mathbb{E}(\sqrt{t_{k+1}-t_k}Z_{t_{k+1}})=0,\\
&\mathbb{E}(B_{t_{k+1}}-B_{t_{k}})^2=\mathbb{E}(\sqrt{t_{k+1}-t_k}Z_{t_{k+1}})^2=t_{k+1}-t_k.
\end{align*}

Now we propose the following algorithm for solving SDVI \eqref{S1}.

\begin{Al}\label{al} \mbox{}

\begin{itemize}
\item[{\bf Step 0}:] First divide the time $[0,T]$ $(h=\frac{T}{N})$ into $N$ intervals,
\begin{align*}
t_{0}=0< t_{1}=h < \cdots < t_{N}=T.
\end{align*}

\item[{\bf Step 1}:] Let $x_h(t_{0})=x_h(0)$ and $u_h(t_{0})=u_h(0)\in K$.

\item[{\bf Step 2}:] For $k=0,1,2,\cdots,N-1$, let $Z(t_{k+1})$ be a standard normal random variable obeying $N(0,t_{k+1})$ and
\begin{align*}
x_h(t_{k+1})=x_h(t_k)+h\left[f(t_{k},x_h(t_{k}),u_h(t_{k}))\right]+\left[g(t_{k},x_h(t_{k}),u_h(t_{k}))\right]\sqrt{h}Z_{t_{k+1}}.
\end{align*}
Compute $u_h(t_{k+1})\in K$ satisfying the following stochastic variational inequality problem:
\begin{align*}
\left\langle F(t_{k+1},\omega,x_h(t_{k+1}),u_h(t_{k+1})),v-u_h(t_{k+1}) \right\rangle \geq 0, \quad \forall v\in K, \; a.s. \; \omega\in \Omega.
\end{align*}

\item[{\bf Step 3}:] By the recursion, for $k=0,1,2,\cdots,N-1$, we link the points of $x_h(t_k)$ when $t\in [t_k, t_{k+1})$ by the following procedure
\begin{align*}
x_h(t)=&x_h(t_k)+\left[f(t_{k},x_h(t_{k}),u_h(t_{k}))\right](t-t_k)\\
&+\left[g(t_{k},x_h(t_{k}),u_h(t_{k}))\right]\sqrt{t-t_k}Z_{t_{k+1}}
\end{align*}
and link the points of $u_h(t_k)$ by the piecewise constant method.
\end{itemize}
\end{Al}

Clearly, Theorem \ref{th-1} ensures the convergence of Euler scheme constructed by Algorithm \ref{al}.

\subsection{The electrical circuits with diodes}
In this subsection, we consider the electrical circuits with (ideal) diodes in the stochastic environment. Following \cite{Pregla1998Grundlagen,Han2009Convergence,Chen2013Convergence}, it is well known that DVIs can be used to model the electrical circuits with (ideal) diodes. Assume that $x_1$ is the electric current across the inductor and $x_2$ is the electric tension  through the capacitor. Let $(v_{D_{i}}, i_{D_{i}} )$ (i=1,2,3,4) denote the voltage-current pairs associated to the $i$-th diode.  Setting $x(t)=(x_1(t),x_2(t))^T$ and $u(t)=(u_1(t),u_2(t),u_3(t),u_4(t))^T=(i_{D_1},v_{D_2},v_{D_3},i_{D_4})^T$, then the electrical circuits with (ideal) diodes can be described as the following DVI (see \cite{Han2009Convergence,Chen2013Convergence})
\begin{equation}\label{ex-1+}
\left\{
\begin{aligned}
&dx(t)=[Ax(t)+Bu(t)+f(t)]dt,\quad x(0)=x_0,\\
&\left\langle Qx(t)+Mu(t),v-u(t) \right\rangle \geq 0, \quad \forall v\in K, a.e. \; t\in [0,T],
\end{aligned}
\right.
\end{equation}
where
\begin{align*}
&A=\left( \begin{matrix} -\frac{2}{3} & 0\\ 0& -\frac{1}{5} \end{matrix} \right),\quad
B=\left( \begin{matrix} 0 & \frac{1}{3} & -\frac{1}{3} & 0 \\ 1 & 0 & 0 & 1 \end{matrix} \right), \quad f(t)=(2\sin(3t-\frac{\pi}{3}),0)^T,\\
&Q=\left( \begin{matrix} 0 & 1 \\ 1 & 0 \\ -1 & 0\\ 0 & 1 \end{matrix} \right), \quad
M=\left( \begin{matrix} \epsilon & 0 & -1 & 0 \\ 0 & \epsilon & 0 & 1 \\ 1 & 0 & \epsilon & 0 \\  0 & -1 & 0 & \epsilon \end{matrix} \right)(\epsilon>0),\\
&K=\left\{y\in \mathbb{R}^n: -10\leq y_1,y_2 \leq 10,\; 0\leq y_3, y_4 \leq 20 \right\},\quad x_0=(-1,0)^T, \quad T=1.5.
\end{align*}
Here the coefficient $\epsilon$ describes the level of the strong monotonicity. By taking into account some stochastic environmental effects in the electrical circuits with (ideal) diodes, DVI \eqref{ex-1+} should be changed into the following SDVI:
\begin{equation}\label{ex-1}
\left\{
\begin{aligned}
&dx(t)=[Ax(t)+Bu(t)+f(t)]dt+[Cx(t)+Du(t)+g(t)]dB_t,\quad x(0)=x_0,\\
&\left\langle Qx(t)+Mu(t),v-u(t) \right\rangle \geq 0, \quad \forall v\in K,\; a.e. \; t\in [0,T], \; a.s. \; \omega\in \Omega,
\end{aligned}
\right.
\end{equation}
where $C=\left(\begin{matrix} a & 0\\ 0& a \end{matrix}\right)$, $D=\left( \begin{matrix} b & b & b & b \\ b & b & b & b \end{matrix} \right)$, $g(t)=(c \sin(t),0)^T$, and $B_t$ is a $1$-dimensional standard Brownian motion. Here the constants $a,b,c\in \mathbb{R}$ are the diffusion coefficients related to $x(t)$, $u(t)$, and time $t$, respectively.

It follows from Theorem \ref{thm-e} that \eqref{ex-1} admits a unique Carath\'{e}odory solution $(x(t),u(t))$ for fixed $a,b,c \in \mathbb{R}$ and $\epsilon>0$. Moreover, since \eqref{ex-1} satisfies Assumption \ref{a-e}, we can use Algorithm \ref{al} to simulate paths for \eqref{ex-1}. Especially, let $T=1.5$, $N=30(h=0.05)$, $x_{0.05}(0)=(-1,0)^T$ and $u_{0.05}(0)=(0,0,0,-1)^T$ in Algorithm \ref{al}. By employing MATLAB R2018a, we can obtain some numerical results as shown in Figures \ref{ex-1-1-12}-\ref{ex-1-2-56}.

Figures \ref{ex-1-1-12}-\ref{ex-1-1-56} indicate that the  electric current through the inductor, the electric tension across the capacitor, the  electric current through the first diode, the voltage across the second diode, the voltage across the third diode, and the  electric current through the fourth diode change with time respectively, for $\epsilon =0.001$ in different $a,b,c \in R$.

\begin{figure}[H]
\subfigure[]{
\includegraphics[scale=0.45]{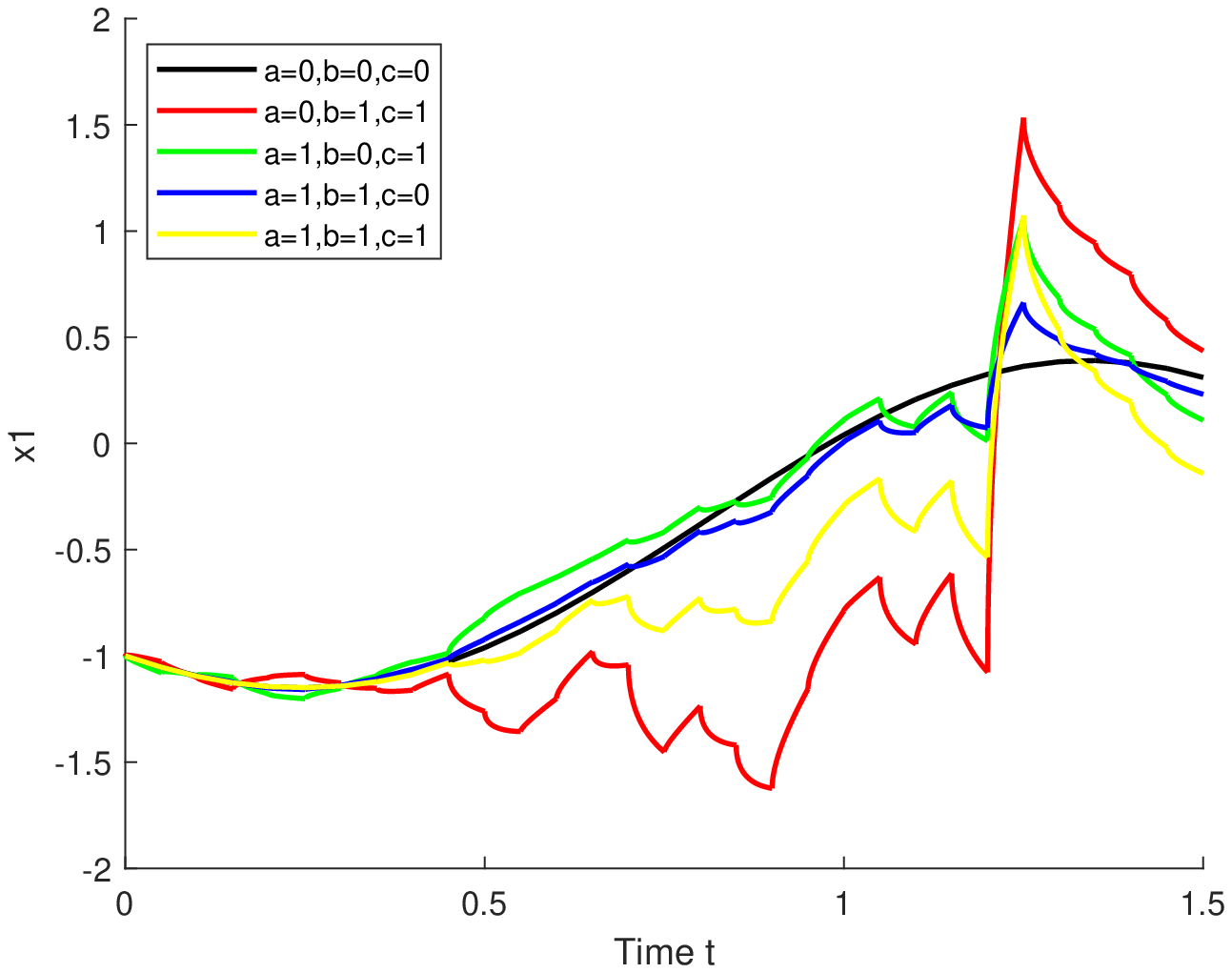}
}
\quad
\subfigure[]{
\includegraphics[scale=0.45]{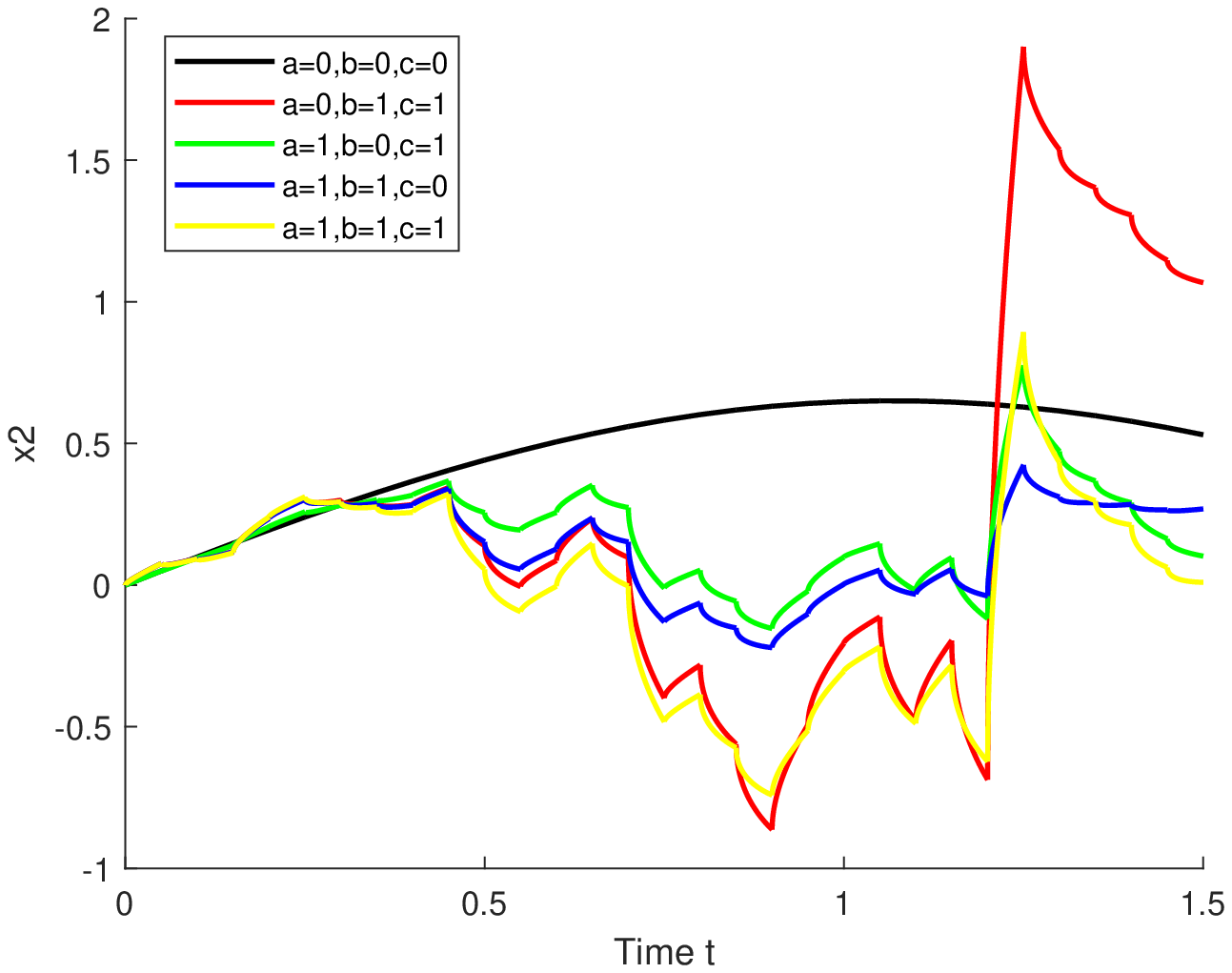}
}
\caption{The electric current through the inductor and the electric tension across the capacitor change with time in different $a,b,c \in R$.}
\label{ex-1-1-12}
\end{figure}

\begin{figure}[H]
\subfigure[]{
\includegraphics[scale=0.45]{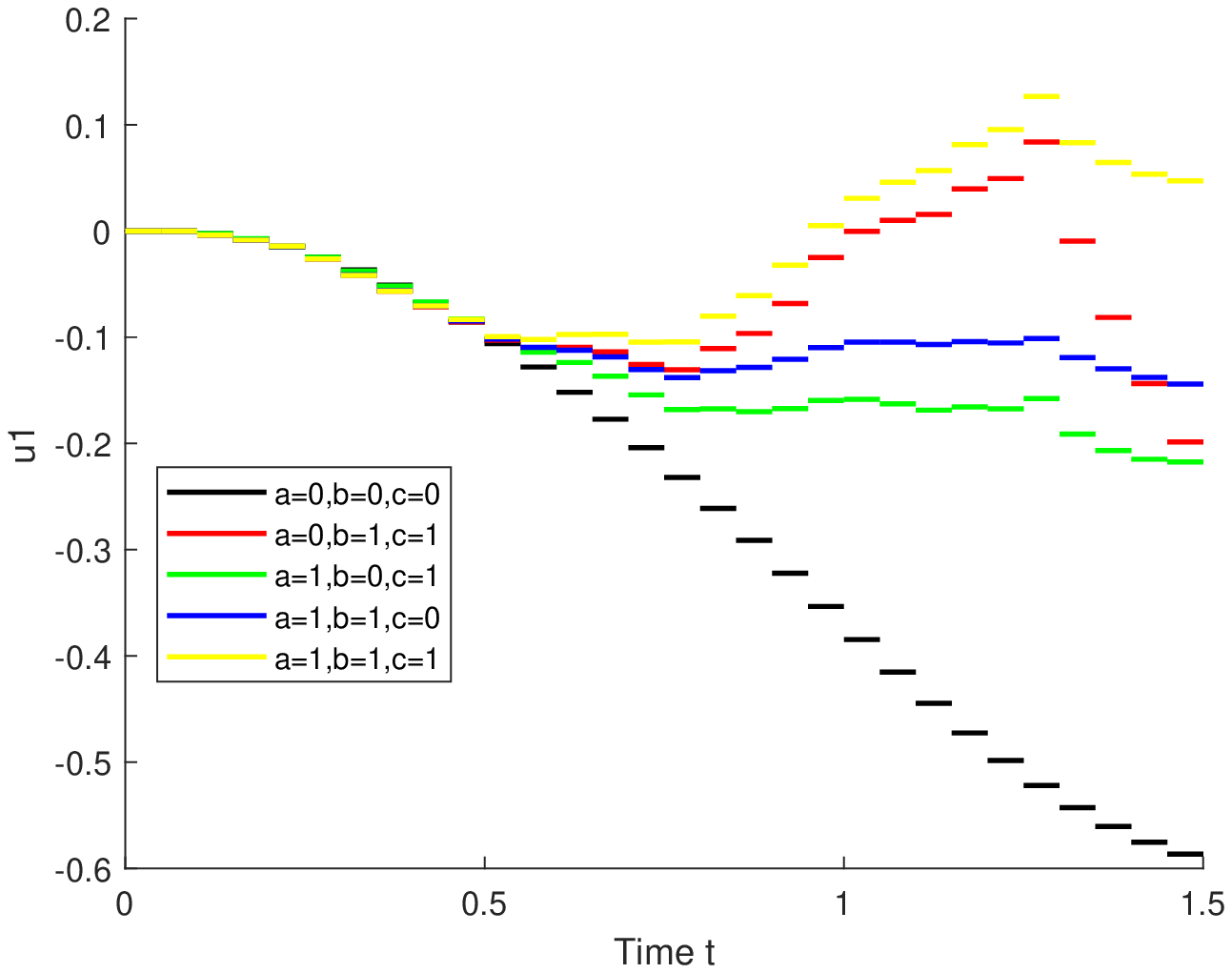}
}
\quad
\subfigure[]{
\includegraphics[scale=0.45]{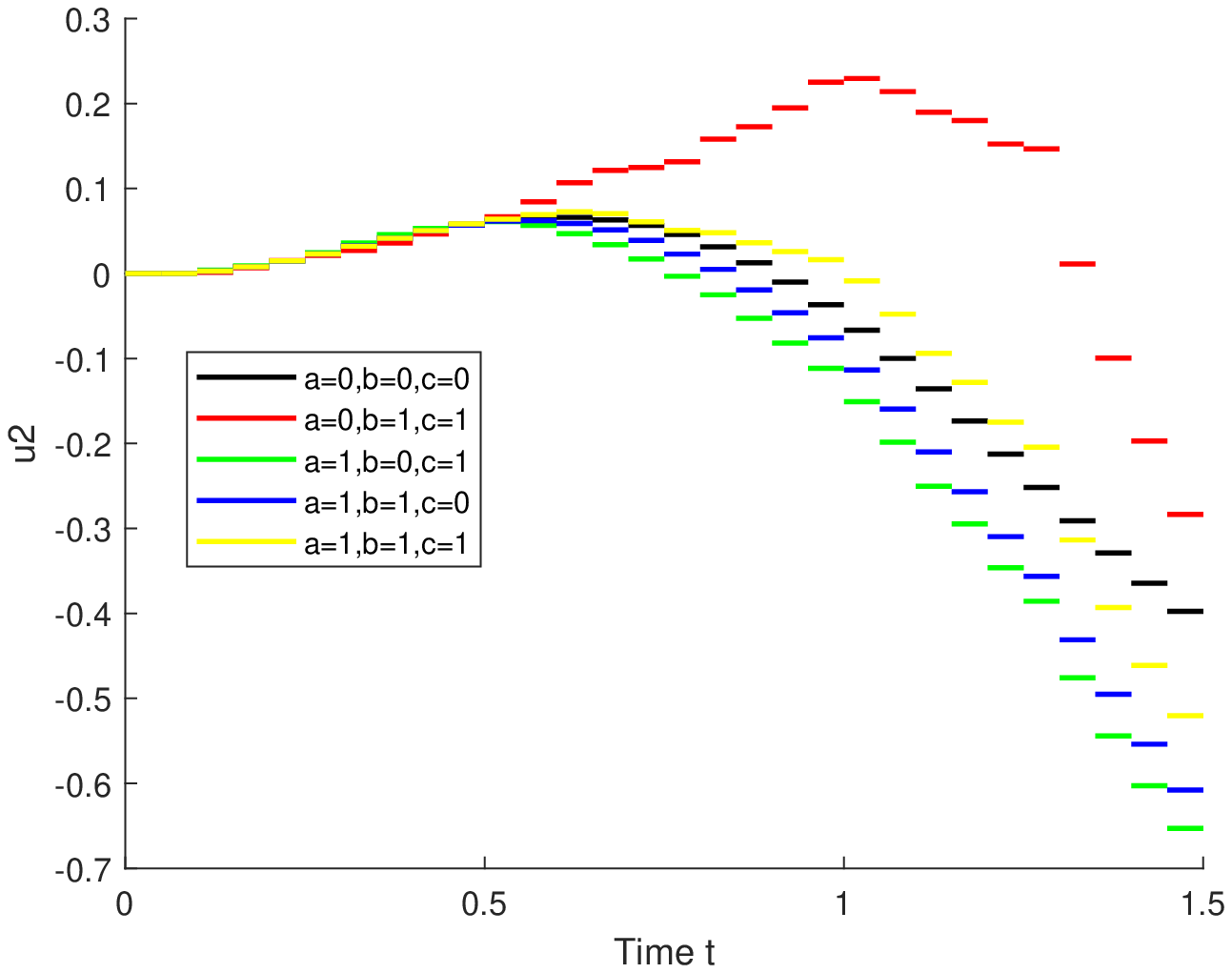}
}
\caption{The electric current across the first diode and the voltage through the second diode change with time in different $a,b,c \in R$. }
\label{ex-1-1-34}
\end{figure}

\begin{figure}[H]
\subfigure[]{
\includegraphics[scale=0.45]{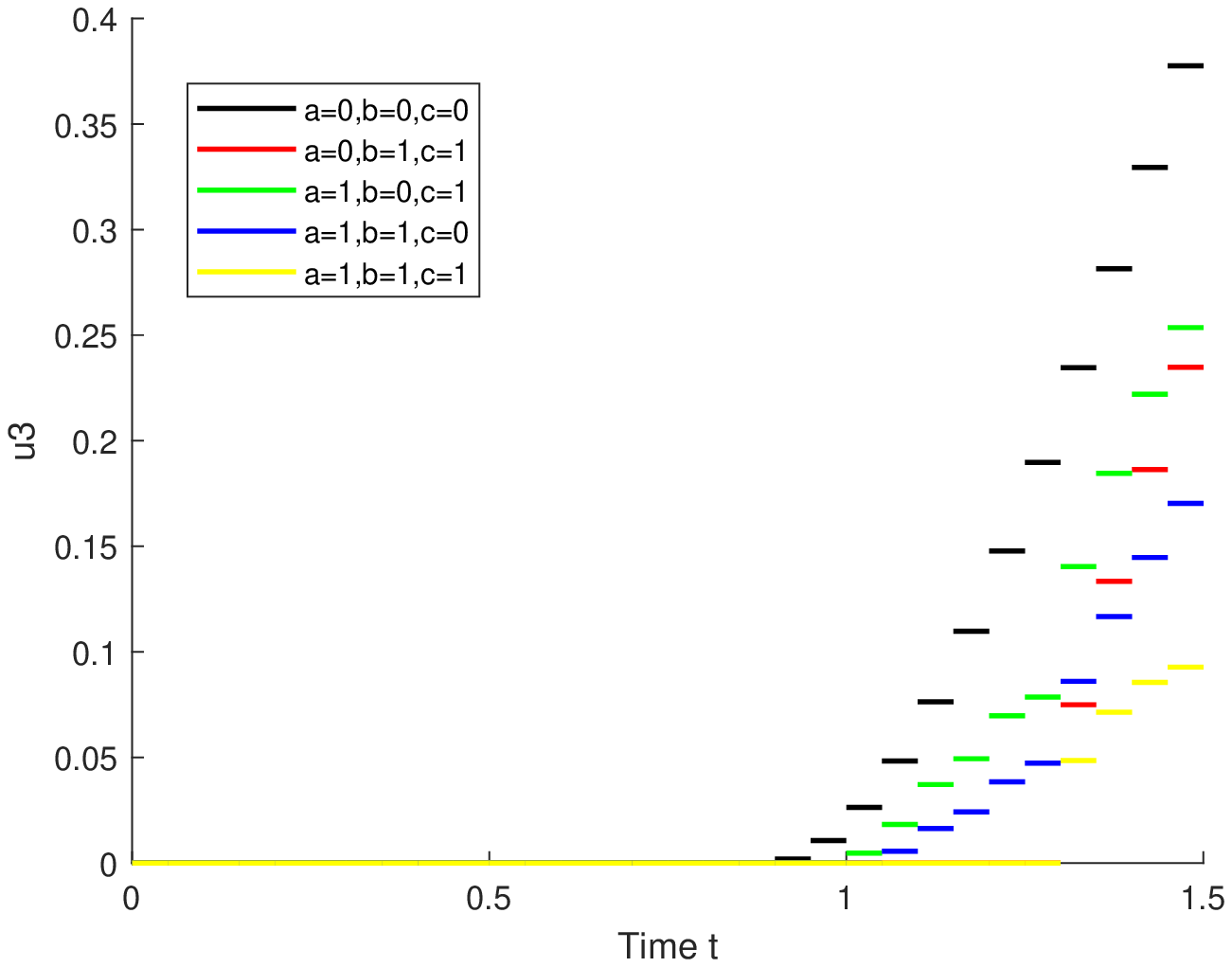}
}
\quad
\subfigure[]{
\includegraphics[scale=0.45]{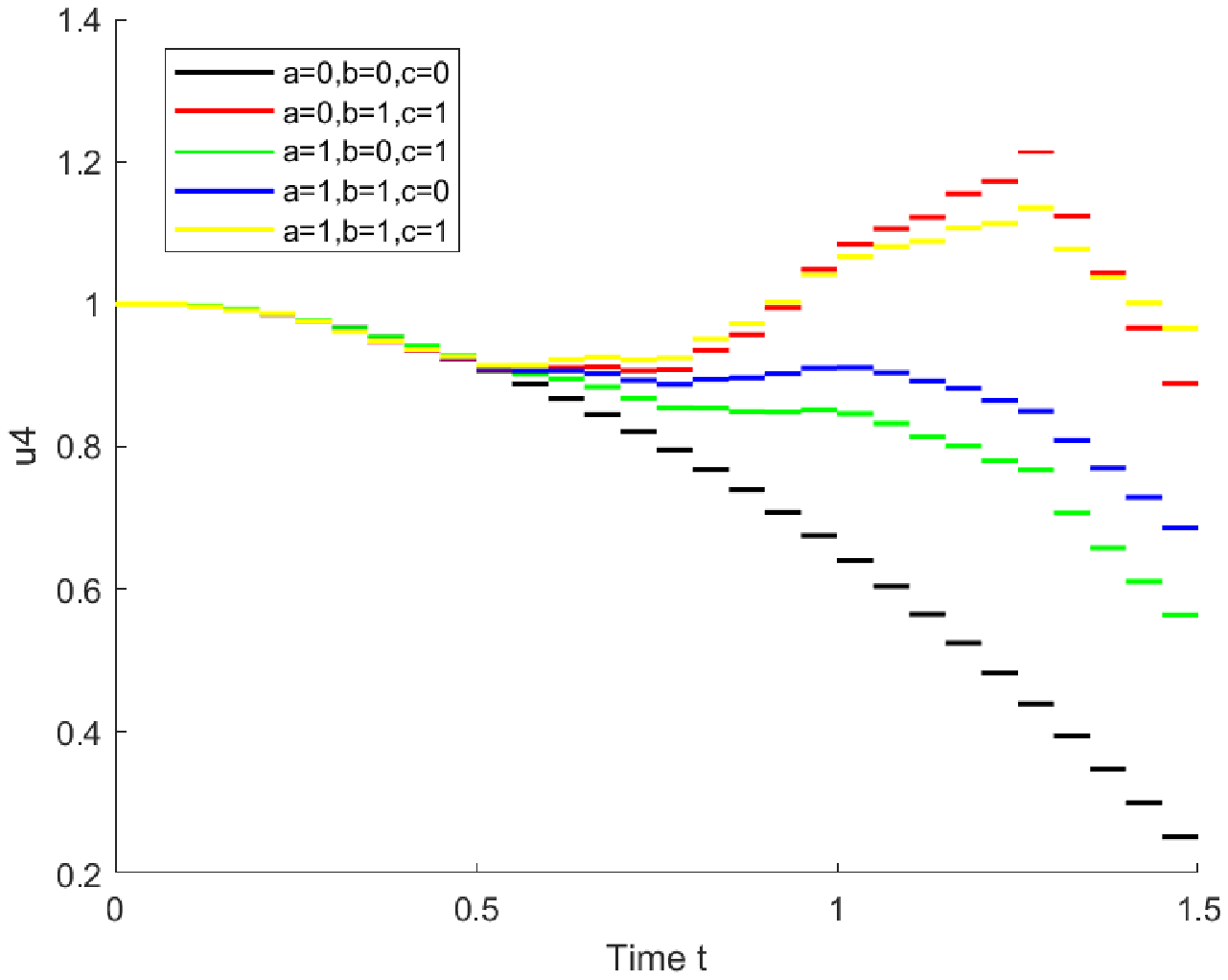}
}
\caption{The voltage across the third diode and the electric current through the fourth diode change with time in different $a,b,c \in R$.}
\label{ex-1-1-56}
\end{figure}

Figures \ref{ex-1-1-12}-\ref{ex-1-1-56} show the following facts: (i) the curves of SDVI \eqref{ex-1} and DVI are coincident when $a=0$, $b=0$, $c=0$, which is drawn by the black line; (ii) the diffusion coefficients have a great influence on the solution of SDVI \eqref{ex-1} as time going on. Moreover, Figure \ref{ex-1-1-12} tells us that the random effect of $x$ has few effects in time $[0,1.2]$ and has great influence in time $(1.2,1.5]$ on the voltage of the capacitor, and the random effects of $x$, $u$ and time $t$ have few effects on the current of inductor and the voltage of capacitor in time $[0,0.5]$; Figures \ref{ex-1-1-34} and \ref{ex-1-1-56} indicate that the random effects of $u$ has the largest influence on $i_{D_1}$, $v_{D_3}$ and $i_{D_4}$ and the random effects of $x$ has the largest effect on $v_{D_2}$.

On the other hand,  Figures \ref{ex-1-2-12}-\ref{ex-1-2-56} display that the electric current across the inductor, the electric tension through the capacitor, the electric current through the first diode, the electric tension across the second diode, the electric tension across the third diode, and the electric current through the fourth diode change with time respectively, for $a=b=c=1$ in different $\epsilon \geq 0$. Moreover, Figures \ref{ex-1-2-12}, \ref{ex-1-2-34}(a), and \ref{ex-1-2-56}(a) depict that $\epsilon$ has little influence on the current and voltage across the capacitor;  Figures \ref{ex-1-2-34}(b) and \ref{ex-1-2-56}(b) show that $\epsilon$ has great influence on the voltage. Clearly, matrix $M$ in SDVI \eqref{ex-1} does not satisfy the strong monotonicity condition in Assumption \ref{a-e} when $\epsilon=0$. Nevertheless, Figures \ref{ex-1-2-12} to \ref{ex-1-2-56} illustrate that the solution of SDVI \eqref{ex-1} when $\epsilon>0$ can approach the solution of SDVI \eqref{ex-1} when $\epsilon=0$ by taking $\epsilon$ small enough.

\begin{figure}[H]
\centering
\subfigure[]{
\includegraphics[scale=0.45]{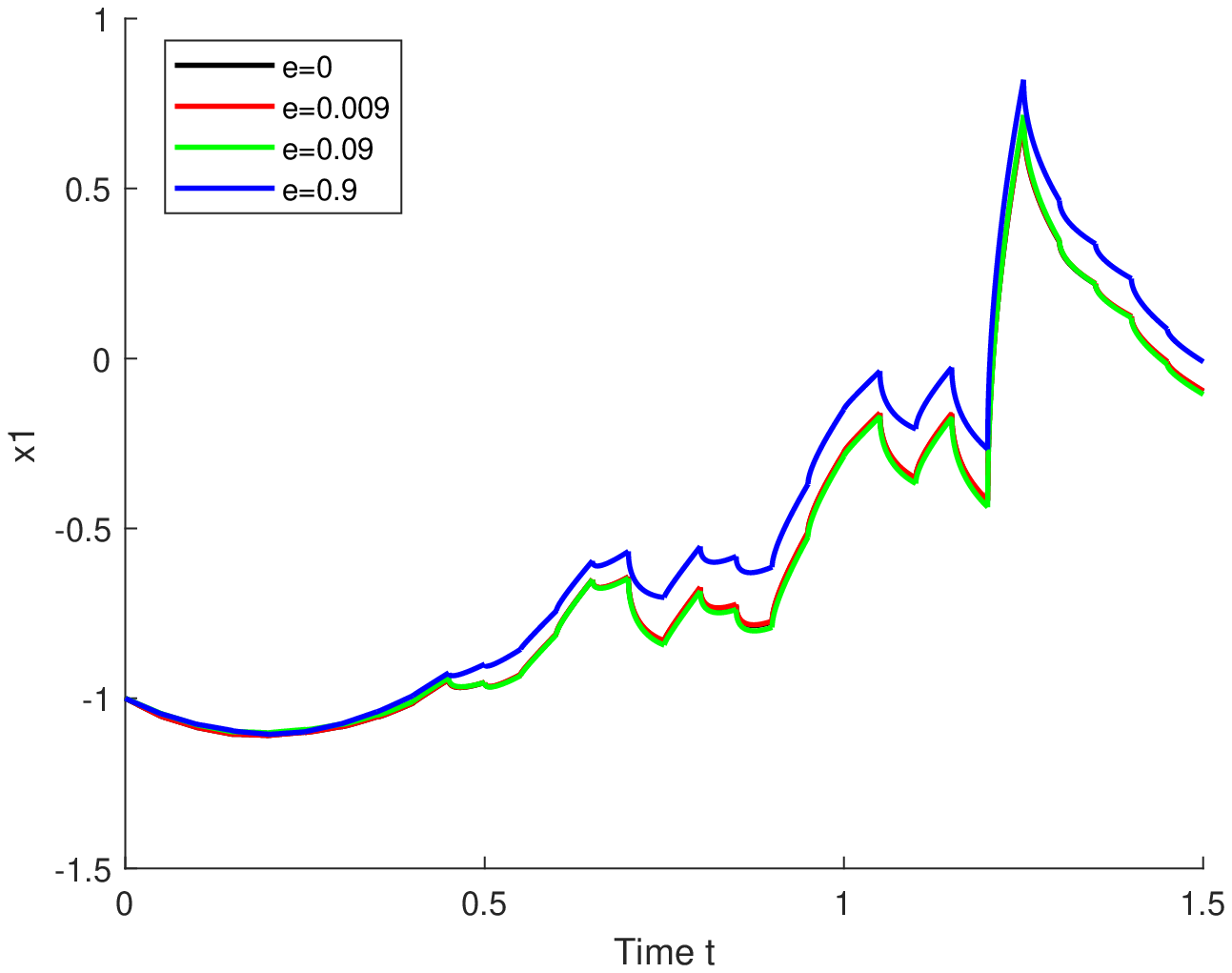}
}
\quad
\subfigure[]{
\includegraphics[scale=0.45]{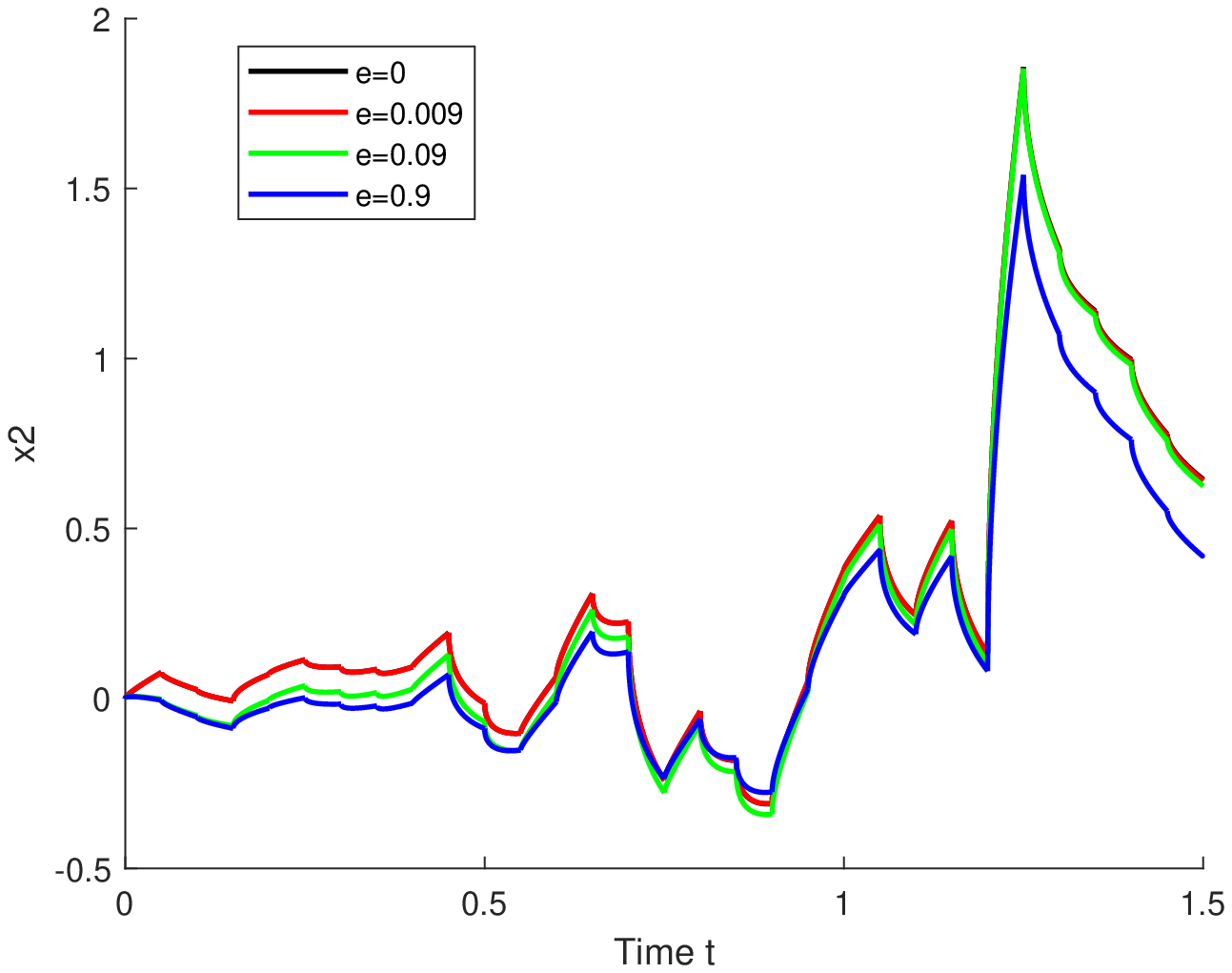}
}
\caption{The electric current across the inductor and the electric tension through the capacitor $C$ change with time in different $\epsilon$.}
\label{ex-1-2-12}
\end{figure}

\begin{figure}[H]
\subfigure[]{
\includegraphics[scale=0.45]{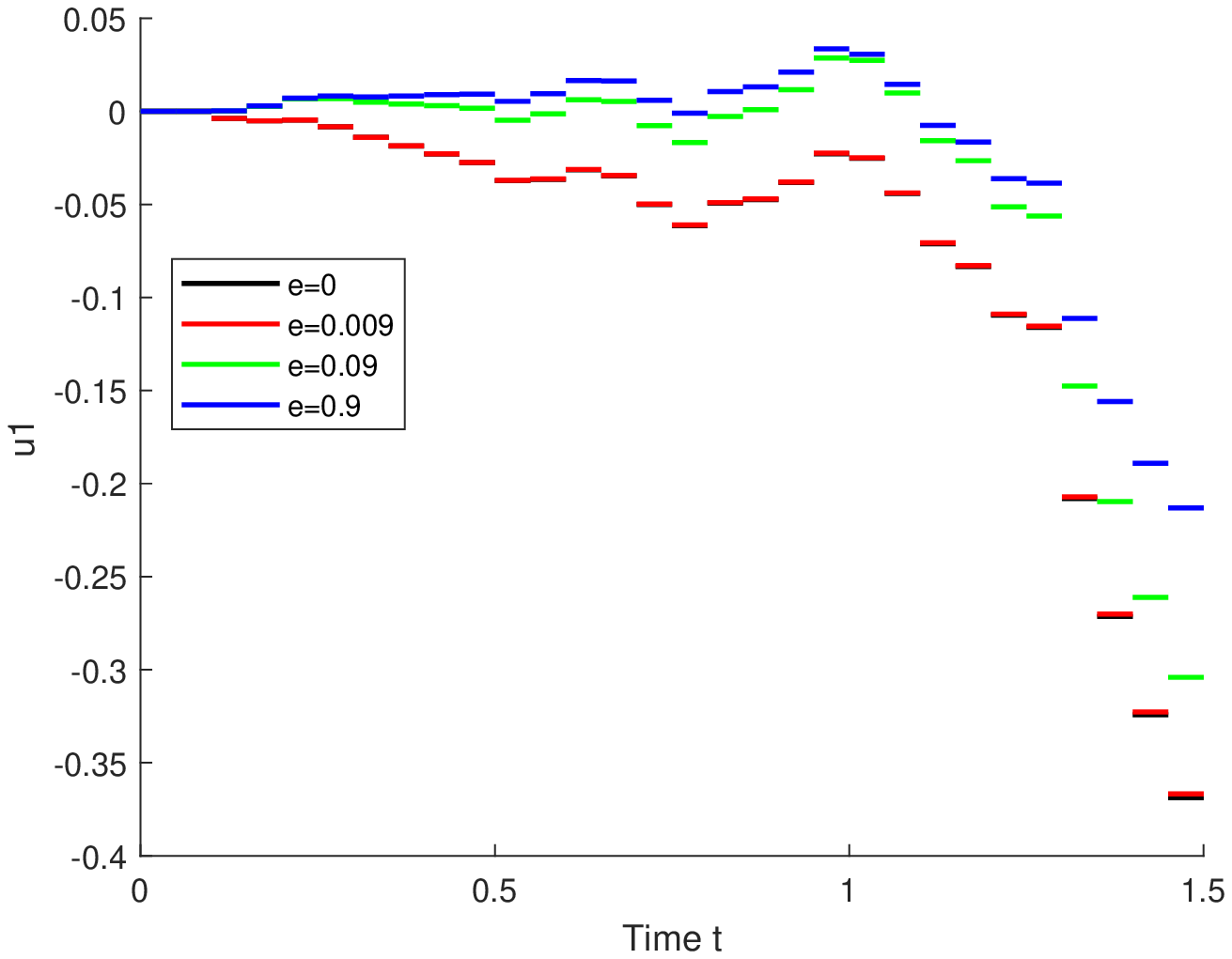}
}
\quad
\subfigure[]{
\includegraphics[scale=0.45]{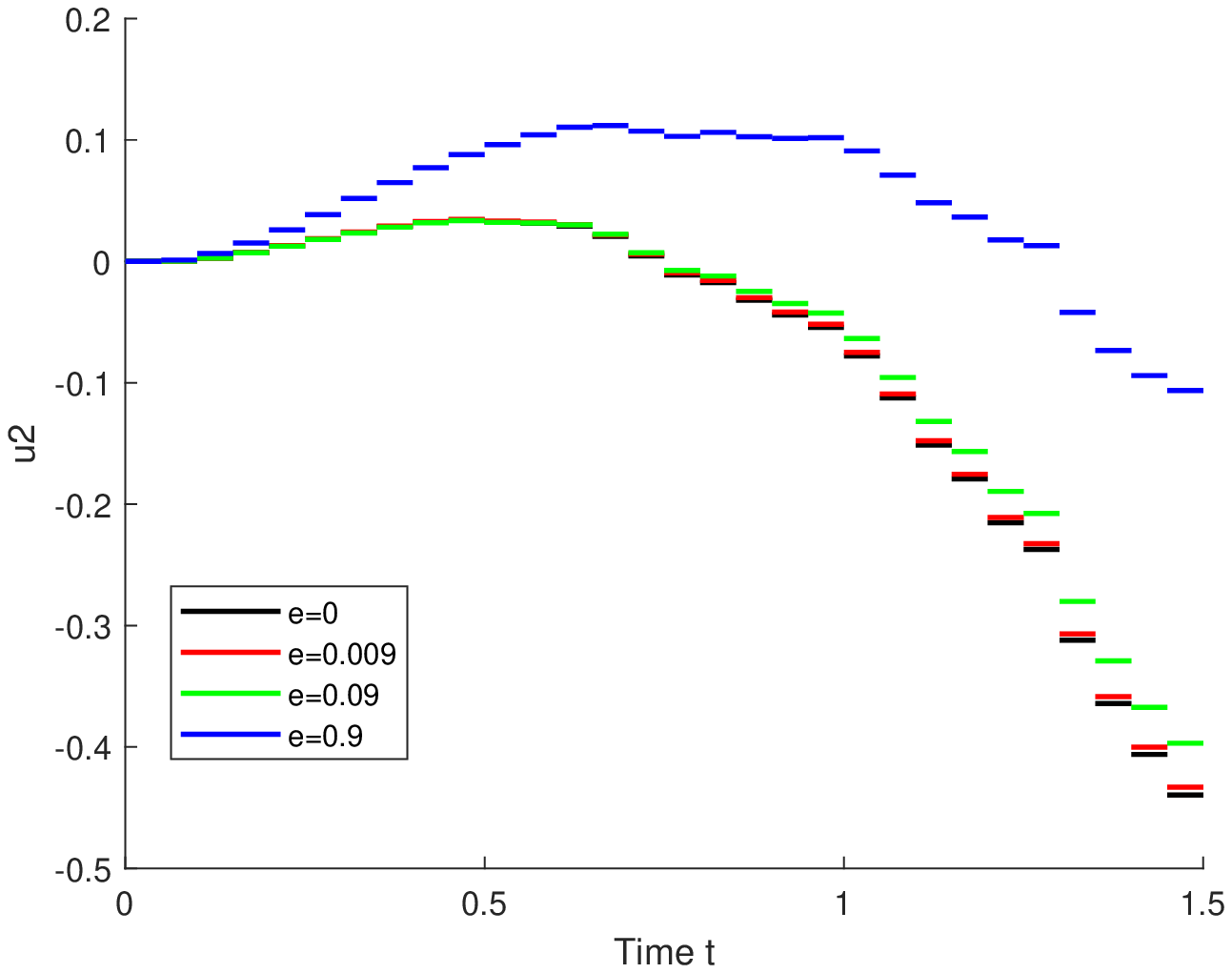}
}\caption{The current through the first diode and the voltage across the second diode change with time in different $\epsilon$. }
\label{ex-1-2-34}
\end{figure}

\begin{figure}[H]
\subfigure[]{
\includegraphics[scale=0.45]{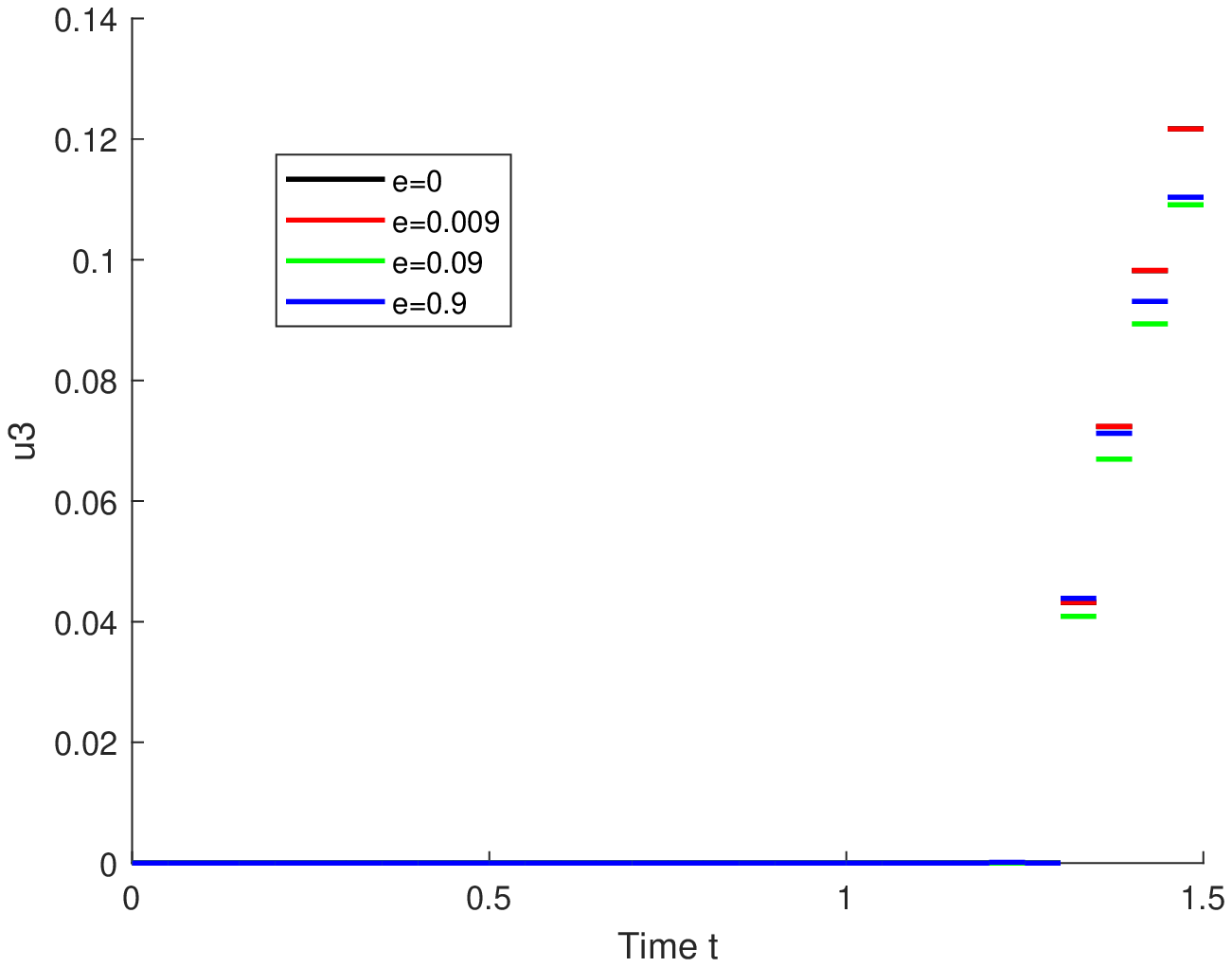}
}
\quad
\subfigure[]{
\includegraphics[scale=0.45]{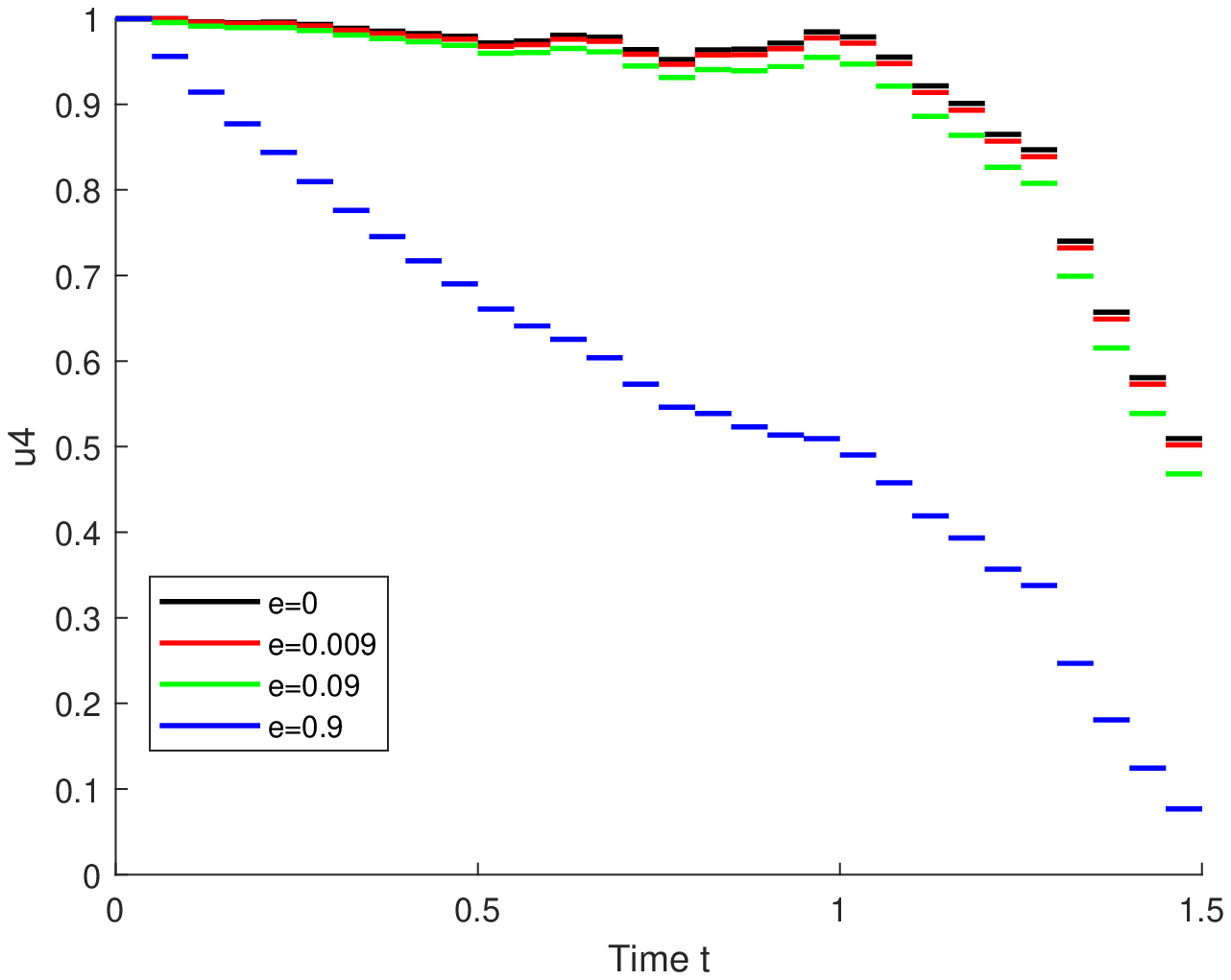}
}
\caption{The voltage across the third diode and the current through the fourth diode change with time in different $\epsilon$.}
\label{ex-1-2-56}
\end{figure}

\subsection{The collapse of the bridge}

In this subsection, we consider an SDVI to model the collapse of the bridge problem in stochastic environment. The following model was introduced in \cite{Chen2012Computational} to describe the collapse of bridge:
\begin{align}\label{ex-2-1}
m\ddot{x}+q(x)=g(x), \quad t\in[0,T],
\end{align}
where $m$ is the mass of the bridge floor, $g$ is the applied force, and
$$
q(x)=
\left\{
\begin{aligned}
&\alpha x, \quad \mbox{if} \quad  x \geq 0, \nonumber \\
&\beta x, \quad \mbox{if} \quad x < 0 \nonumber
\end{aligned}
\right.
$$
is the restoring force for displacement in different directions. Here $\alpha$ and $\beta$ are Hooke's constants for the tension and compression, respectively. From \cite{Ditlevsen1993Plastic,Bensoussan2006Stochastic,Bensoussan2012Asymptotic}, by taking into account the plasticity and white noise in \eqref{ex-2-1}, one has
\begin{align}\label{ex-2-2}
m\ddot{x}+\tau \dot{x} +q(x)=g(x)+k\frac{dB_t}{dt}, \quad t\in[0,T],
\end{align}
where $k\geq 0 $ is the diffusion coefficient to describe the degree of the white noise, $\tau \geq 0$ is the viscous damping coefficient to describe the level of the plasticity of the bridge.
If $\alpha \geq \beta$, then
\begin{align}
q(x)=\alpha x +u,
\end{align}
where $u=\max\left\{0,(\beta-\alpha)x\right\}$.  Thus,
\begin{align}
0\leq u\bot u+(\alpha-\beta)x \geq 0.
\end{align}

Let $y=(y_1,y_2)^T$ with $y_1=x$ and $y_2=\frac{dy_1}{dt}$. Take $\theta \geq 0$, $\alpha=4$, $m=10$, $\beta=1$, and $T=1$. Then \eqref{ex-2-2} can be changed into the following SDVI:
\begin{equation}\label{ex-2}
\left\{
\begin{aligned}
&dy(t)=f(t,y(t),u(t))dt+g(t,y(t),u(t))dB_t,\\
&0\leq u\bot u+3 y_1 \geq 0, \\
&y(0)=(0,\theta)^T,\quad t\in[0,1],
\end{aligned}
\right.
\end{equation}
where
$$
\begin{cases}
f(t,y(t),u(t))=\left(y_2(t),-\frac{2}{5}y_1(t)-\tau y_2(t)-\frac{1}{10}u(t)+sin(4t)\right)^T, \\
g(t,y(t),u(t))=(0,k)^T, \quad \tau \geq 0, \quad k\geq 0, \quad \theta \geq 0.
\end{cases}
$$
Here $\theta$ is initial value of $y_2$ which describes the initial velocity of the oscillation of the bridge.

It follows from Theorem \ref{thm-e} that \eqref{ex-2} admits a unique Carath\'{e}odory solution $(x(t),u(t))$ for fixed $\tau \geq 0$, $k \geq 0$ and $\theta \geq 0$. Moreover, since \eqref{ex-2} satisfies Assumption \ref{a-e}, we can use Algorithm \ref{al} to simulate paths for \eqref{ex-1}. Particularly, take $T=1$, $N=50(h=0.02)$, $x_{0.02}(0)=(0,\theta)^T$ and $u_{0.02}(0)=0$ in Algorithm \ref{al}. By using MATLAB R2018a, we can obtain the numerical results as shown in Figures \ref{ex-2-1-12}-\ref{ex-2-3-3}.

Figures  \ref{ex-2-1-12} and \ref{ex-2-1-3} show the displacement and velocity of the bridge, and the maximum between relative restoring force of the bridge and 0 change with time respectively, for $\tau=1$ and $\theta=0$ in different $k\geq 0$. In fact, Figures  \ref{ex-2-1-12} and \ref{ex-2-1-3} tell us that (i) the curves of the SDVI \eqref{ex-2} and a DVI are coincident when $k=0$, which is drawn by the black line with hollow circles; (ii) the effect of the white noise is very small when the diffusion coefficient $k$ is small and the effect of the white noise can be large when the diffusion coefficient $k$ is big enough, especially the blue line when $k=10$; (iii) the little and appropriate white noise (the green line) can reduce the oscillation of the bridge to some extent and large white noise can impact greatly the oscillation of the bridge. However, it is unknown whether large white noise reduce or increase the oscillation of the bridge in this experiment.

\begin{figure}[H]
\centering
\subfigure[]{
\includegraphics[scale=0.45]{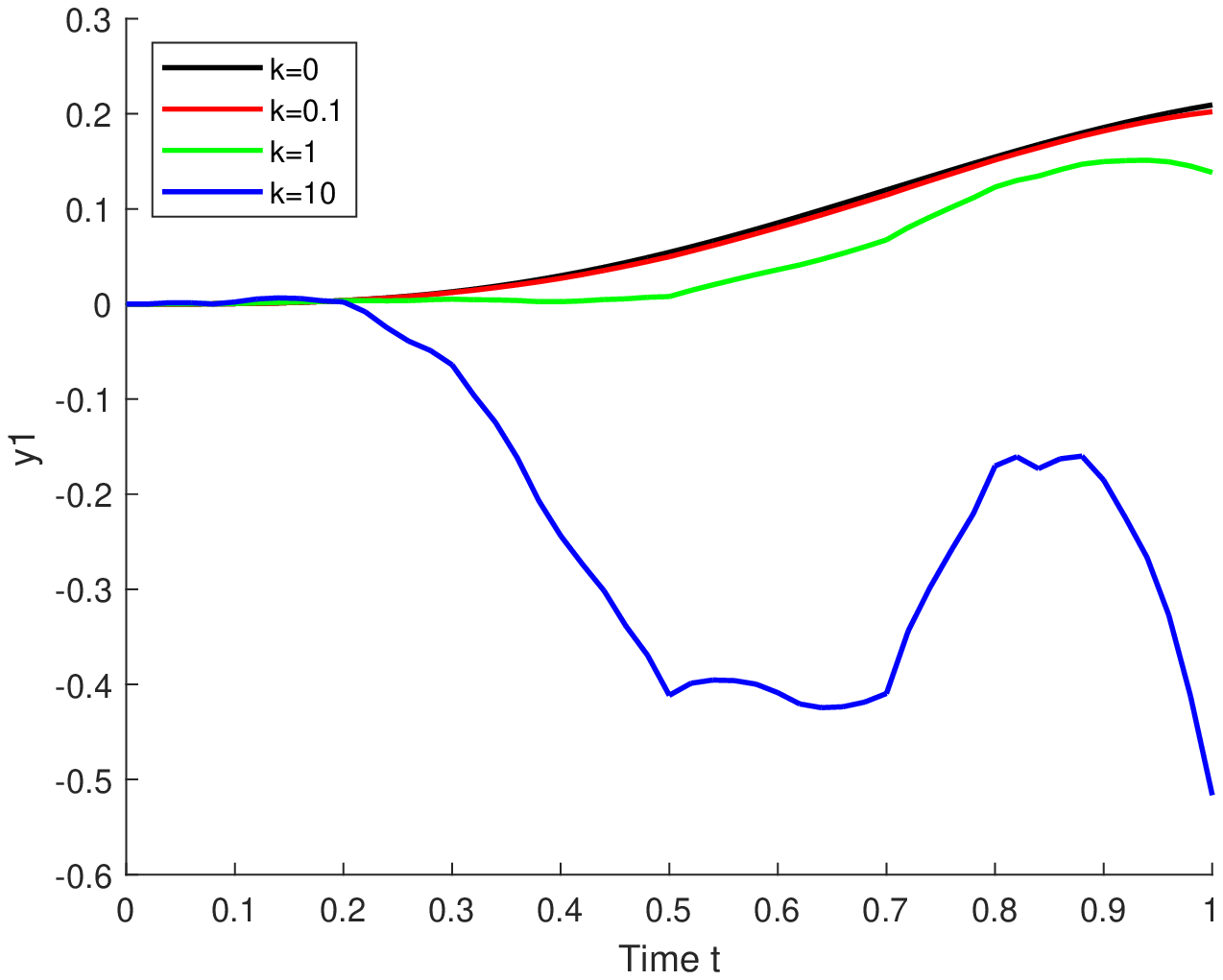}
}
\quad
\subfigure[]{
\includegraphics[scale=0.45]{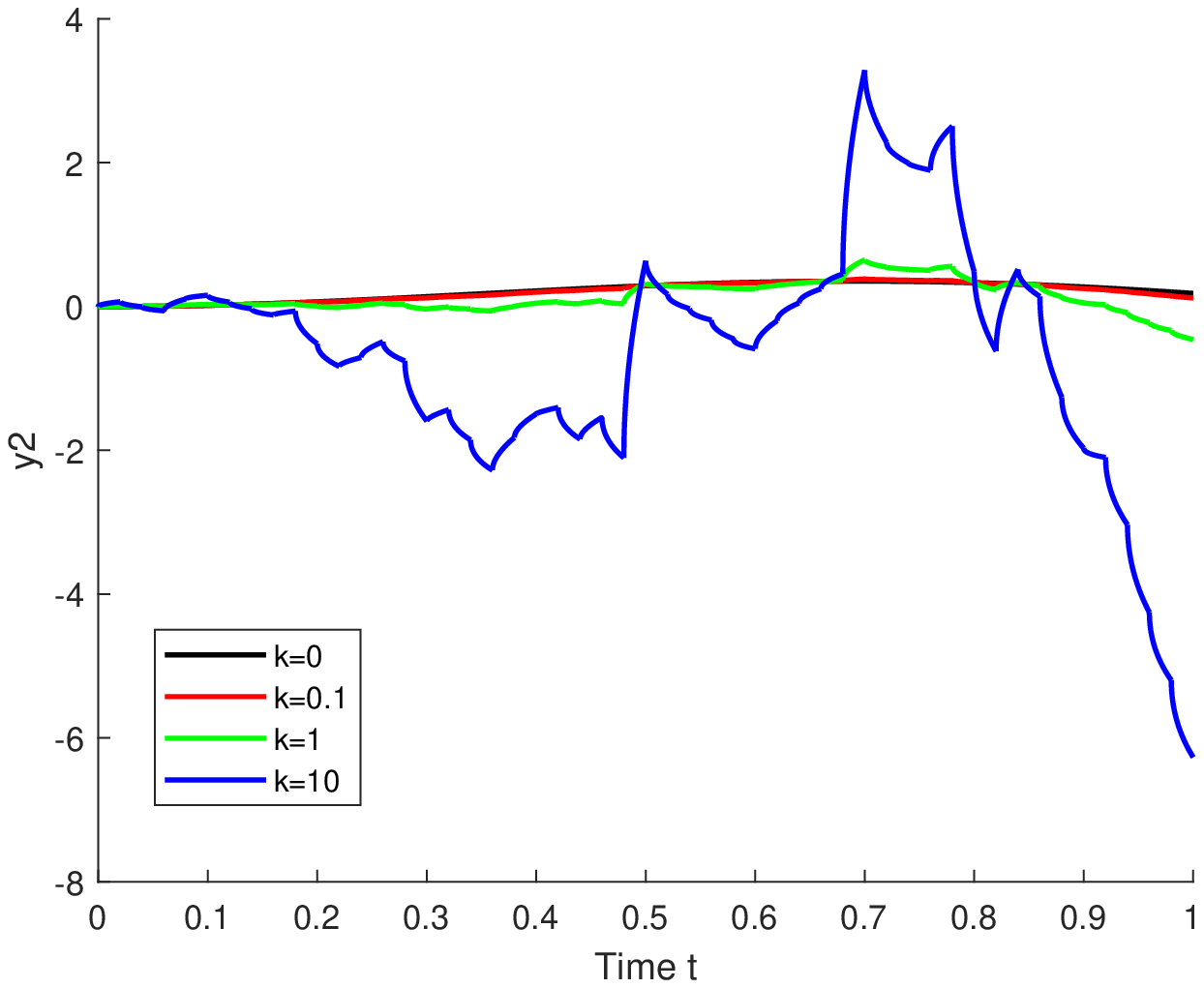}
}
\caption{The displacement and velocity of the bridge change with time in different $k\geq 0$.}
\label{ex-2-1-12}
\end{figure}

\begin{figure}[H]
\centering
\includegraphics[width=0.8\textwidth,height=0.3\textwidth]{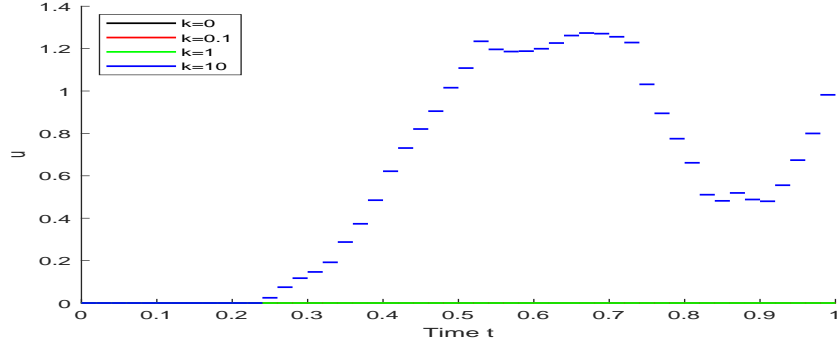}
\caption{The maximum between relative restoring force of the bridge and 0 changes with time in different $k\geq 0$.}
\label{ex-2-1-3}
\end{figure}

On the other hand, Figures  \ref{ex-2-2-12} and \ref{ex-2-2-3} depict the displacement and velocity of the bridge, and the maximum between relative restoring force of the bridge and 0 change with time respectively, for $k=1$ and $\theta=0$ in different $\tau$. Indeed, Figure \ref{ex-2-2-12} tells us that the $\tau$ bigger, the $y_1$ and $y_2$ are smaller, which implies that the high plasticity of the bridge can reduce the oscillation of the bridge, while Figure \ref{ex-2-2-3} displays that the plasticity does not influence $u$.

\begin{figure}[H]
\centering
\subfigure[]{
\includegraphics[scale=0.45]{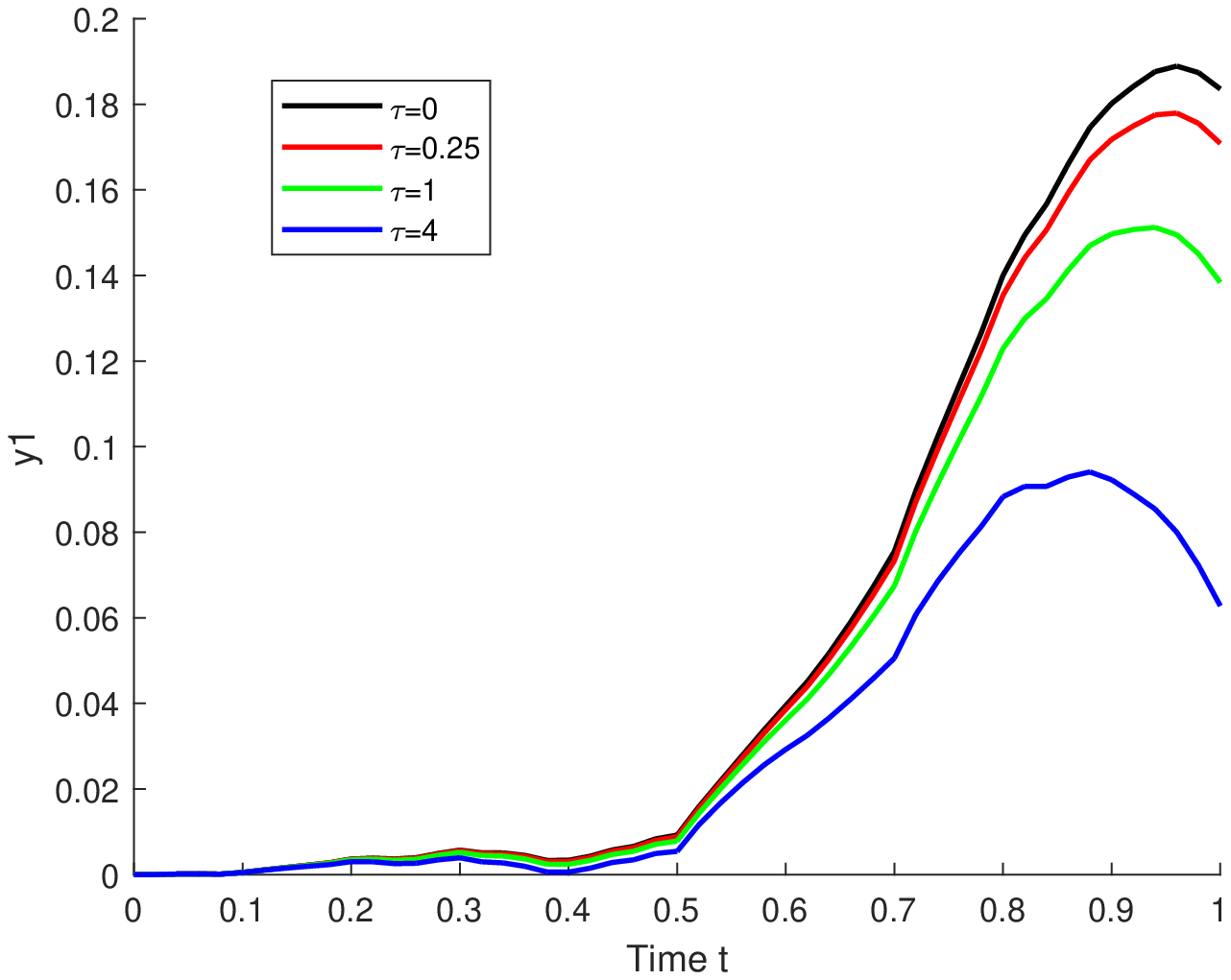}
}
\quad
\subfigure[]{
\includegraphics[scale=0.45]{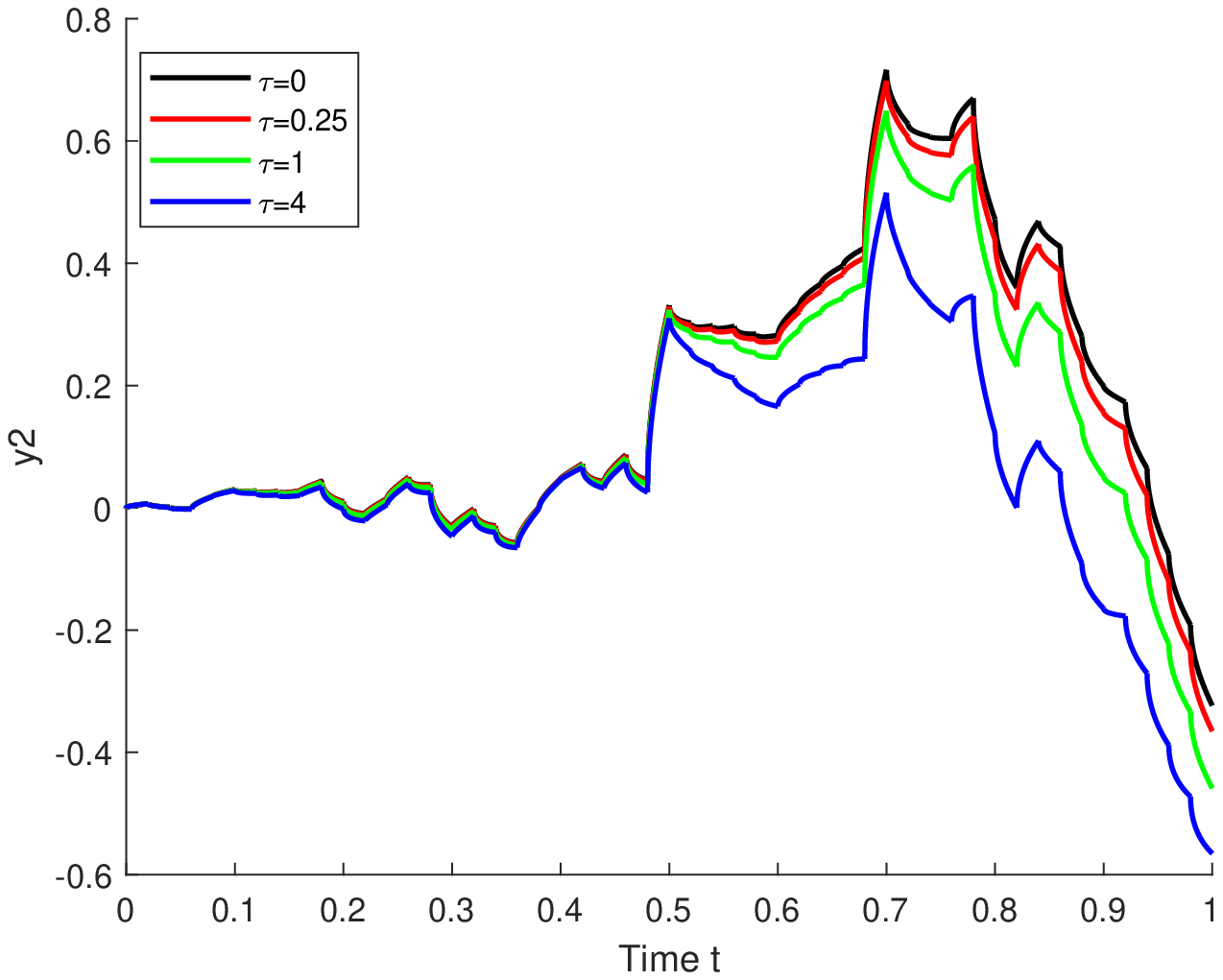}
}
\caption{The displacement and velocity of the bridge change with time in different $\tau$.}
\label{ex-2-2-12}
\end{figure}

\begin{figure}[H]
\centering
\includegraphics[width=0.8\textwidth,height=0.3\textwidth]{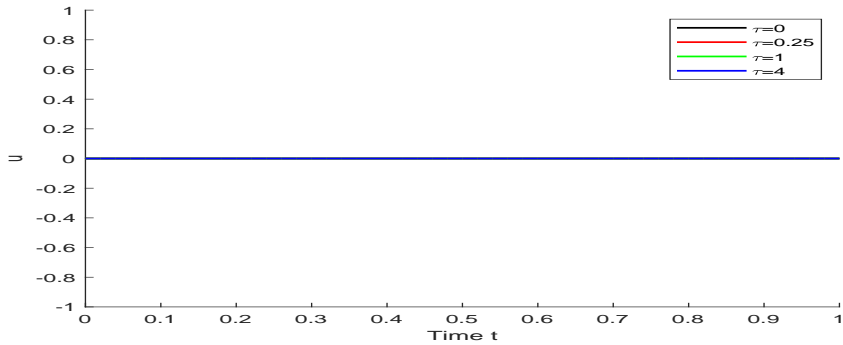}
\caption{The maximum between relative restoring force of the bridge and 0  changes with time in different $\tau$.}
\label{ex-2-2-3}
\end{figure}

Moreover, Figures  \ref{ex-2-3-12} and \ref{ex-2-3-3} show that the displacement and velocity of the bridge, and the maximum between relative restoring force of the bridge and 0 change with time respectively, for $k=1$ and $\tau=1$ in different $\theta$. In fact, Figure \ref{ex-2-3-12} indicates that the $\theta$ smaller, the $y_1$ and $y_2$ are smaller, which implies that the small initial velocity can reduce the oscillation of the bridge, while Figure \ref{ex-2-3-3} shows that the initial velocity does not influence $u$.

\begin{figure}[H]
\centering
\subfigure[]{
\includegraphics[scale=0.45]{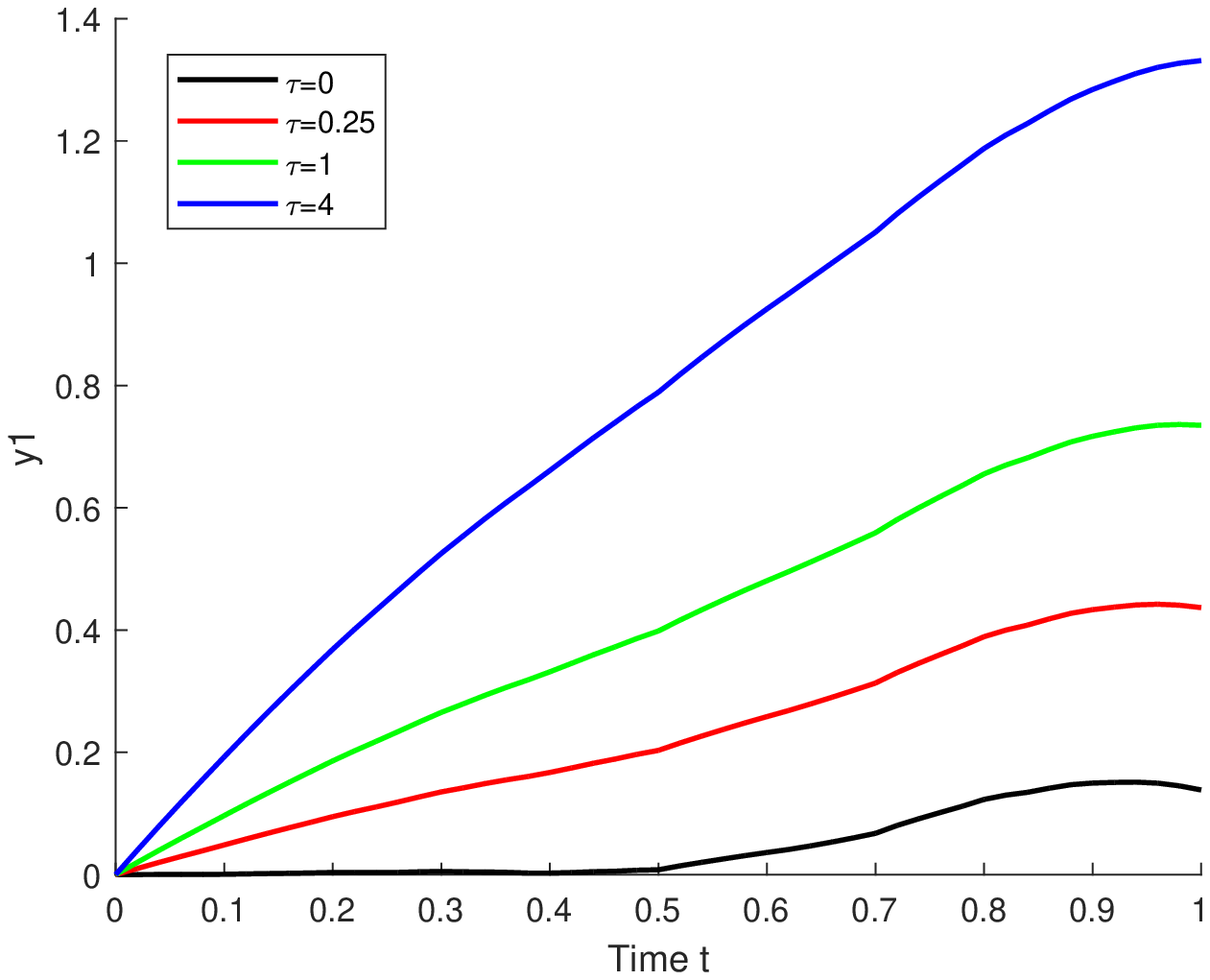}
}
\quad
\subfigure[]{
\includegraphics[scale=0.45]{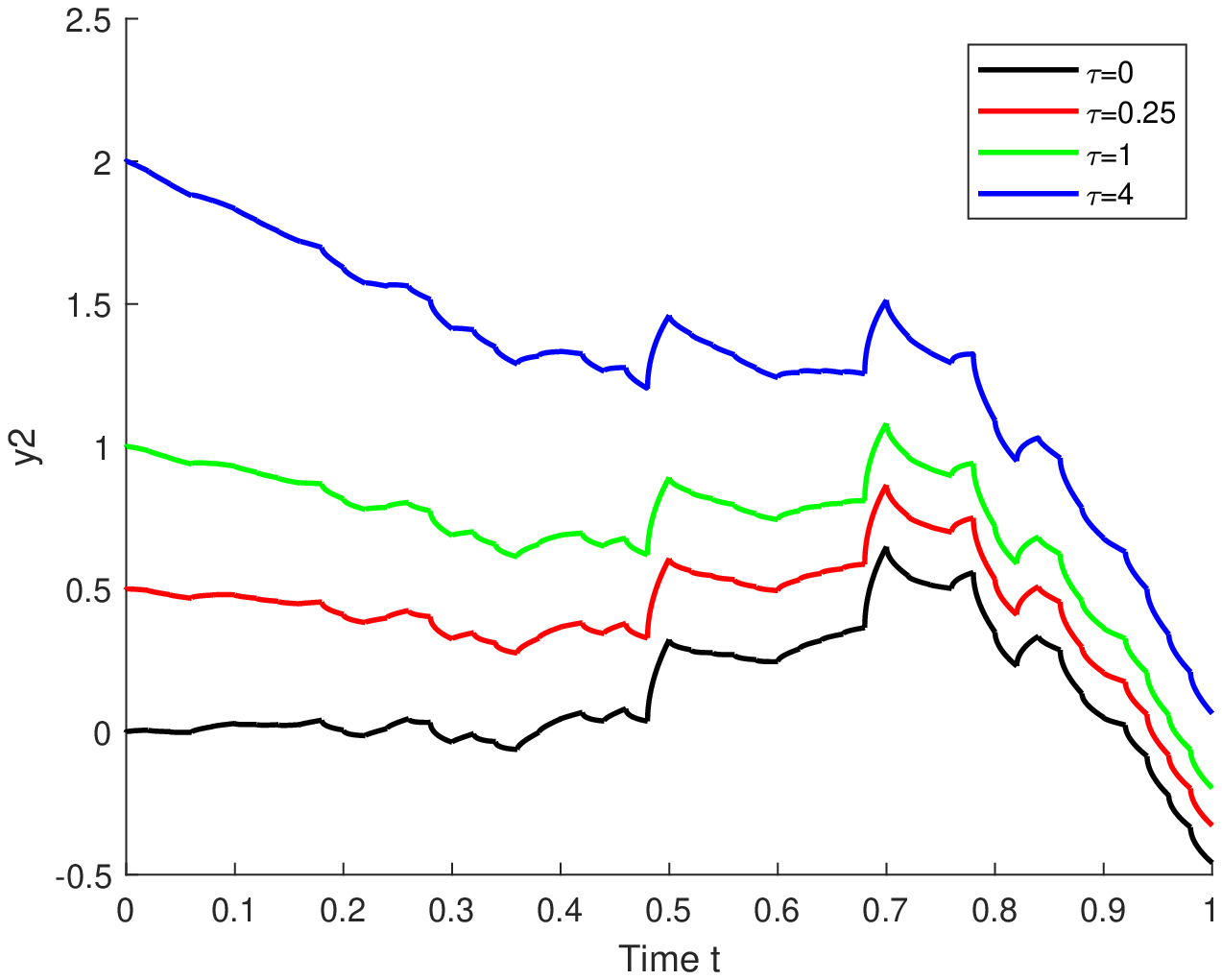}
}
\caption{The displacement and velocity of the bridge change with time in different $\theta$.}
\label{ex-2-3-12}
\end{figure}

\begin{figure}[H]
\centering
\includegraphics[width=0.8\textwidth,height=0.3\textwidth]{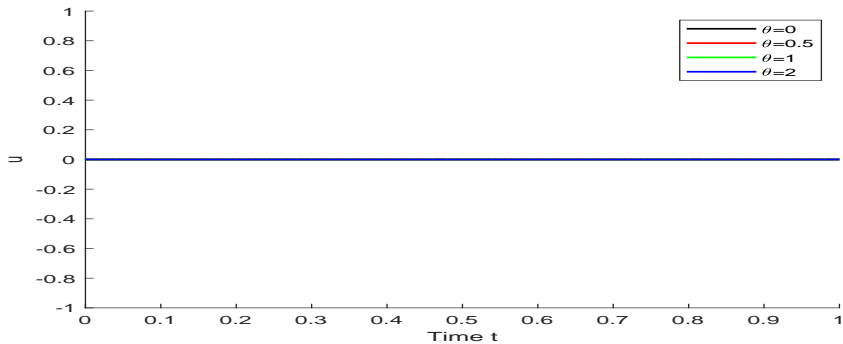}
\caption{The maximum between relative restoring force of the bridge and 0 changes with time in different $\theta$.}
\label{ex-2-3-3}
\end{figure}

The numerical results obtained above illustrate that SDVI is useful and promising for solving some real problems in the stochastic environment.


\begin{thebibliography}{10}

\bibitem{Avaji2019Stability}
M.~Avaji, A.~Jodayree Akbarfam, and A.~Haghighi.
\newblock Stability analysis of high order runge-kutta methods for index 1
  stochastic differential-algebraic equations with scalar noise.
\newblock {\em Applied Mathematics and Computation}, 362, 124544, 2019.

\bibitem{Bauschke2011Convex}
Heinz~H Bauschke and Patrick~L Combettes.
\newblock {\em Convex Analysis and Monotone Operator Theory in Hilbert Spaces}.
\newblock Springer Science+Business Media, Switzerland, 2nd edition, 2011.

\bibitem{Bensoussan2012Asymptotic}
A.~Bensoussan, J.F. He\'{e}ctor, M.~St\'{e}phane, and M.~Laurent.
\newblock Asymptotic analysis of stochastic variational inequalities modeling
  an elasto-plastic problem with vanishing jumps.
\newblock {\em Asymptotic Analysis}, 80(1-2):171--187, 2012.

\bibitem{Bensoussan2006Stochastic}
A.~Bensoussan and J.~Turi.
\newblock Stochastic variational inequalities for elasto-plastic oscillators.
\newblock {\em Comptes Rendus Math\'{e}matique}, 343(6):399--406, 2006.

\bibitem{Bensoussan2007Applied}
A.~Bensoussan and J.~Turi.
\newblock Degenerate dirichlet problems related to the invariant measure of
  elasto-plastic oscillators.
\newblock {\em Applied Mathematics and Optimization}, 58(1):1--27, 2007.

\bibitem{Brezis2011Functional}
H.~Brezis.
\newblock {\em Functional Analysis, Sobolev Spaces and Partial Differential
  Equations}.
\newblock Springer Science+Business Media, New York, 2011.

\bibitem{Brogliato2020Dynamical}
B.~Brogliato and A.~Tanwani.
\newblock Dynamical systems coupled with monotone set-valued operators:
  formalisms, applications, well-posedness, and stability.
\newblock {\em SIAM Review}, 62(1):3--129, 2020.

\bibitem{Brunner2004Collocation}
H.~Brunner.
\newblock {\em Collocation Methods for Volterra Integral and Related Functional
  Differential Equations}.
\newblock Cambridge University Press, Cambridge, 2004.

\bibitem{Buckdahn2015Stochastic}
R.~Buckdahn, L.~Maticiuc, E.~Pardoux, and A.~R\u{a}\c{s}canu.
\newblock Stochastic variational inequalities on non-convex domains.
\newblock {\em Journal of Differential Equations}, 259(12):7332--7374, 2015.

\bibitem{Burrage2007Numerical}
K.~Burrage, I.~Lenane, and G.~Lythe.
\newblock Numerical methods for second-order stochastic differential equations.
\newblock {\em Journal of Differential Equations}, 29(1):245--264, 2007.

\bibitem{Cao2015Numerical}
W.R. Cao, Z.Q. Zhang, and G.E.M. Karniadakis.
\newblock Numerical methods for stochastic delay differential equations via the
  wong-zakai approximation.
\newblock {\em SIAM Journal on Scientific Computing}, 37(1):A295--A318, 2015.

\bibitem{Chen2021A}
T.~Chen, N.J. Huang, X.S. Li, and Y.Z. Zou.
\newblock A new class of differential nonlinear system involving parabolic
  variational and history-dependent hemi-variational inequalities arising in
  contact mechanics.
\newblock {\em Communications in Nonlinear Science $\&$ Numerical Simulation},
  101, 105886, 2021.

\bibitem{Chen2012Computational}
X.J. Chen and Z.Y. Wang.
\newblock Computational error bounds for a differential linear variational
  inequality.
\newblock {\em IMA Journal of Numerical Analysis}, 32(3):957--982, 2012.

\bibitem{Chen2013Convergence}
X.J. Chen and Z.Y. Wang.
\newblock Convergence of regularized time-stepping methods for differential
  variational inequalities.
\newblock {\em SIAM Journal on Optimization}, 23(3):1647--1671, 2013.

\bibitem{Chirima2020Numerical}
J.~Chirima, E.~Chikodza, M.~Hove, and D.~Senelani.
\newblock Numerical methods for first order uncertain stochastic differential
  equations.
\newblock {\em International Journal of Mathematics in Operational Research},
  16(1):1--23, 2020.

\bibitem{Ditlevsen1993Plastic}
O.~Ditlevsen and L.~Bogn\'{a}r.
\newblock Plastic displacement distributions of the gaussian white noise
  excited elasto-plastic oscillator.
\newblock {\em Probabilistic Engineering Mechanics}, 8(3):209--231, 1993.

\bibitem{Facchinei2003Finite}
F.~Facchinei and J.S. Pang.
\newblock {\em Finite-Dimensional Variational Inequalities and Complementarity
  Problems}.
\newblock Springer-Verlag, New York, 2003.

\bibitem{Gassous2012Stochastic}
A.M. Gassous, A.~R\u{a}\c{s}canu, and E.~Rotenstein.
\newblock Stochastic variational inequalities with oblique subgradients.
\newblock {\em Stochastic Processes and their Applications}, 122(7):2668--2700,
  2012.

\bibitem{Glasserman2003Monte}
P.~Glasserman.
\newblock {\em Monte Carlo Methods in Financial Engineering}.
\newblock Springer Science+Business Media, New York,  2003.

\bibitem{Han2009Convergence}
L.S. Han, A.~Tiwari, M.K. Camlibel, and J.S. Pang.
\newblock Convergence of time-stepping schemes for passive and extended linear
  complementarity systems.
\newblock {\em SIAM Journal on Numerical Analysis}, 47(5):3768--3796, 2009.

\bibitem{Liu2022A}
G.D.~Gu K.~Liu.
\newblock A family of fully implicit strong It\^{o}-taylor numerical methods
  for stochastic differential equations.
\newblock {\em Journal of Computational and Applied Mathematics}, 406, 113924, 2022.

\bibitem{Khodabin2011Numerical}
M.~Khodabin, K.~Maleknejad, M.~Rostami, and M.~Nouri.
\newblock A barzilai-borwein type method for stochastic linear complementarity
  problems.
\newblock {\em Mathematical and Computer Modelling}, 53(9):1910--1920, 2011.

\bibitem{Kuo1990Introduction}
H.H. Kuo.
\newblock {\em Introduction to Stochastic Integration}.
\newblock Springer Science+Business Media, Berlin, 2005.

\bibitem{Li2021Numerical}
L.~Li, J.F. Lu, J.C. Mattingly, and L.H. Wang.
\newblock Numerical methods for stochastic differential equations based on
  gaussian mixture.
\newblock {\em Communications in Mathematical Sciences}, 19(6):1549--1577,
  2021.

\bibitem{Li2015Differential}
W.~Li, X.~Wang, and N.J. Huang.
\newblock Differential inverse variational inequalities in finite dimensional
  spaces.
\newblock {\em Acta Mathematica Scientia}, 35(2):407--422, 2015.

\bibitem{Li2010Differential}
X.S. Li, N.J. Huang, and D.~O'Regan.
\newblock Differential mixed variational inequalities in finite dimensional
  spaces.
\newblock {\em Nonlinear Analysis}, 72(9):3875--3886, 2010.

\bibitem{Migorski2013Nolinear}
S.~Mig\'{o}rski, A.~Ochal, and M.~Sofonea.
\newblock {\em Nonlinear Inclusions and Hemivariational Inequalities: Models
  and Analysis of Contact Problems}.
\newblock Springer Science+Business Media, New York, 2013.

\bibitem{Milano2013A}
F.~Milano and R.Z. Minano.
\newblock A systematic method to model power systems as stochastic differential
  algebraic equations.
\newblock {\em A systematic method to model power systems as stochastic
  differential algebraic equations}, 28(4):4537--4544, 2013.

\bibitem{Nguyen2012Lyapunov}
D.C. Nguyen and T.T. Nguyen.
\newblock Lyapunov spectrum of nonautonomous linear stochastic differential
  algebraic equations of index-1.
\newblock {\em Stochastics and Dynamics}, 12(4), 1250002, 2012.

\bibitem{Pang2008Differential}
J.S. Pang and D.~Stewart.
\newblock Differential variational inequalities.
\newblock {\em Mathematical Programming}, 113(2):345--424, 2008.

\bibitem{Pang2009Solution}
J.S. Pang and D.~Stewart.
\newblock Solution dependence on initial conditons in differential varitional
  inequalities.
\newblock {\em Mathematical Programming}, 116(1):429--460, 2009.

\bibitem{Peter1995Numberical}
E.K. Peter and P.~Eckhard.
\newblock {\em Numberical Solution of Stochastic Differential Equations}.
\newblock Springer-Verlag, New York, 1995.

\bibitem{Platen1999An}
E.~Platen.
\newblock An introduction to numerical methods for stochastic differential
  equations.
\newblock {\em Acta Numerica}, 8:197--246, 1999.

\bibitem{Pregla1998Grundlagen}
R.~Pregla.
\newblock {\em Grundlagen der Elektrotechnik}.
\newblock H\"{u}thig-Verlag, Heidelberg, 1998.

\bibitem{Raghunathan2006Parameter}
A.U. Raghunathan, J.R. P\'{e}rez-Correa, E.~Agosin, and L.T. Biegler.
\newblock Parameter estimation in metabolic flux balance models for batch
  fermentation-formulation and solution using differential variational
  inequalities.
\newblock {\em Annals of Operations Research}, 148(1):251--270, 2006.

\bibitem{Tocino2002Runge}
A.~Tocino and R.~Ardanuy.
\newblock Runge-kutta methods for numerical solution of stochastic differential
  equations.
\newblock {\em Journal of Computational $\&$ Applied Mathematics},
  138(2):219--241, 2002.

\bibitem{Random2018Tran}
K.D. Tran, N.L. Nguyen, O.~Vale, and Q.V. Mai.
\newblock Random integral guiding functions with application to random
  differential complementarity systems.
\newblock {\em Discussiones Mathematicae. Differential Inclusions, Control and
  Optimization}, 38(1-2):113--132, 2018.

\bibitem{Wang2018Existence}
X.~Wang, Y.W. Qi, C.Q. Tao, and Q.~Wu.
\newblock Existence result for differential variational inequality with
  relaxing the convexity condition.
\newblock {\em Applied Mathematics and Computation}, 331:297--306, 2018.

\bibitem{Wang2016A}
X.~Wang, Y.W. Qi, C.Q. Tao, and Y.B. Xiao.
\newblock A class of delay differential variational inequalities.
\newblock {\em Journal of Optimization Theory and Applications}, 172(1):56--69,
  2016.

\bibitem{Weng2021Rothe}
Y.H. Weng, T.~Chen, X.S. Li, and N.J. Huang.
\newblock Rothe method and numerical analysis for a new class of fractional
  differential hemivariational inequality with an application.
\newblock {\em Computers $\&$ Mathematics with Applications}, 98:118--138,
  2021.

\bibitem{Wu2020A}
S.L. Wu, T.~Zhou, and X.J. Chen.
\newblock A gauss-seidel type method for dynamic nonlinear complementarity
  problems.
\newblock {\em SIAM Journal on Control and Optimization}, 58(6):3389--3412,
  2020.

\bibitem{Yang2013Dynkin}
Z.~Yang and S.J. Tang.
\newblock Dynkin game of stochastic differential equations with random
  coefficients and associated backward stochastic partial differential
  variational inequality.
\newblock {\em SIAM Journal on Control and Optimization}, 51(1):64--95, 2013.

\bibitem{Yang2020Strong}
Z.~Yang, X.~Zheng, H.~Zang, and H.~Wang.
\newblock Strong convergence of euler-maruyama scheme to a variable-order
  fractional stochastic differential equation driven by a multiplicative white
  noise.
\newblock {\em Chaos Solitons $\&$ Fractals}, 142, 110392, 2021.

\bibitem{Yi2019A}
Y.X. Yi, R.W. Xu, and S.~Zhang.
\newblock A differential game of r\&d investment for pollution abatement in
  different market structures.
\newblock {\em Physica A: Statistical Mechanics and its Applications},
  524:587--600, 2019.

\bibitem{Zeng2018A}
S.D. Zeng, Z.H. Liu, and S.~Migorski.
\newblock A class of fractional differdntial hemivariational inequalities with
  application to contact problem.
\newblock {\em Zeitschrift f\"{u}r angewandte Mathematik und Physik},
  69(2):1--23, 2018.

\end{thebibliography}
\end{document}